%Connectivity paper. MP and X Yang 
%Edited by MP and sent to ArXiv 02-06-24.
% Further edited and sent to ArXiv 15-07-24 as v2.
% Further minor edits 17-07-24
% Uses diagram files: 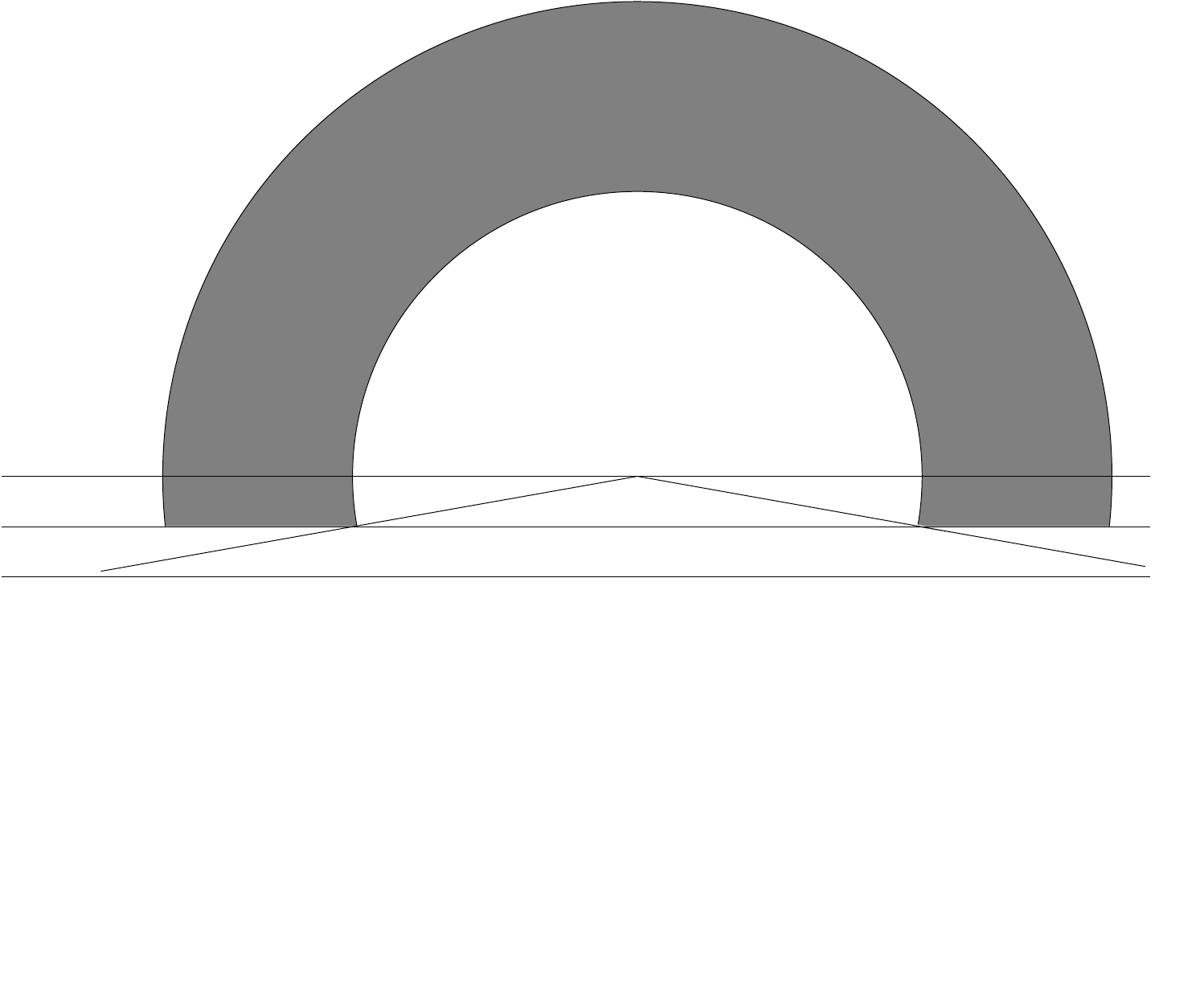, 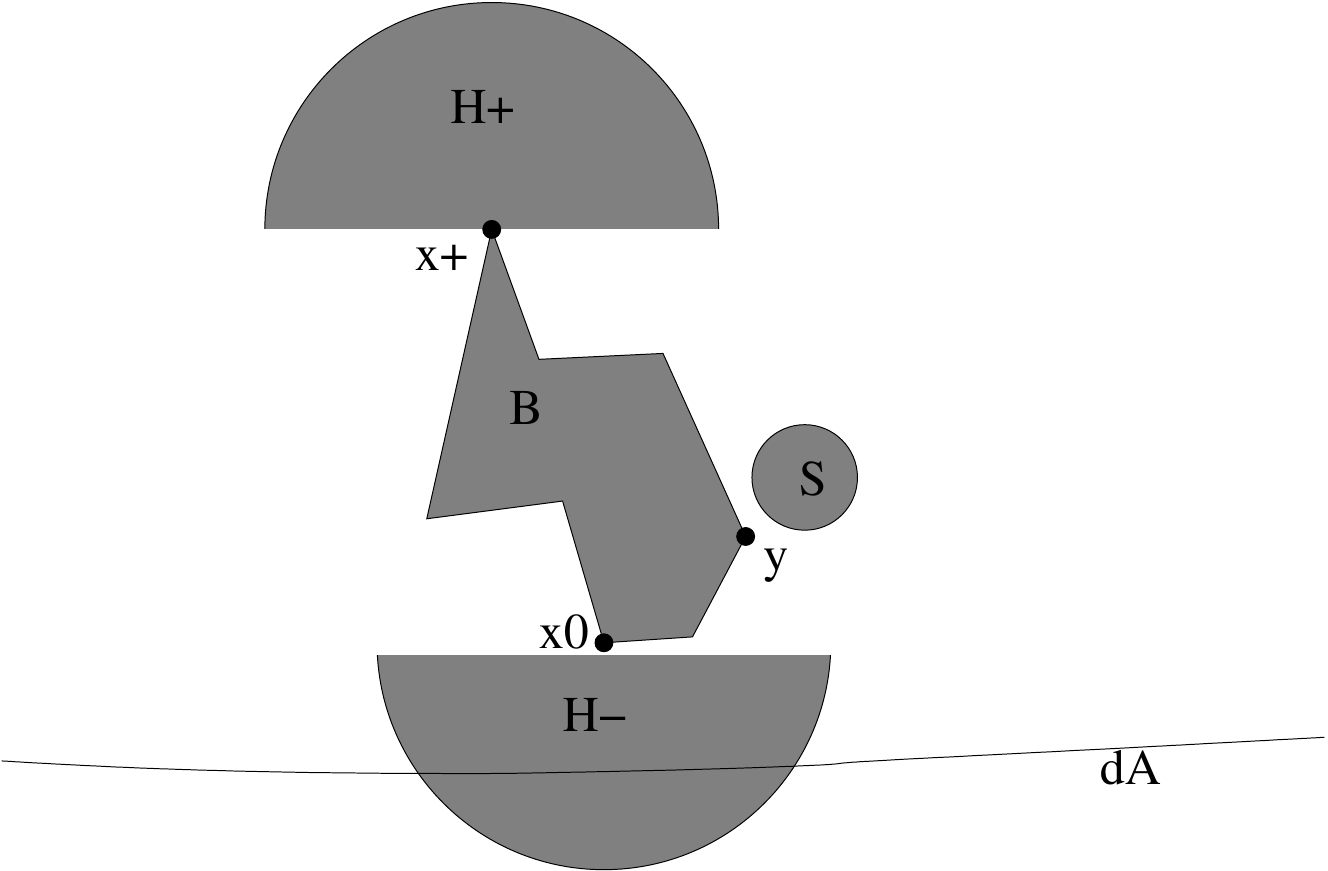, 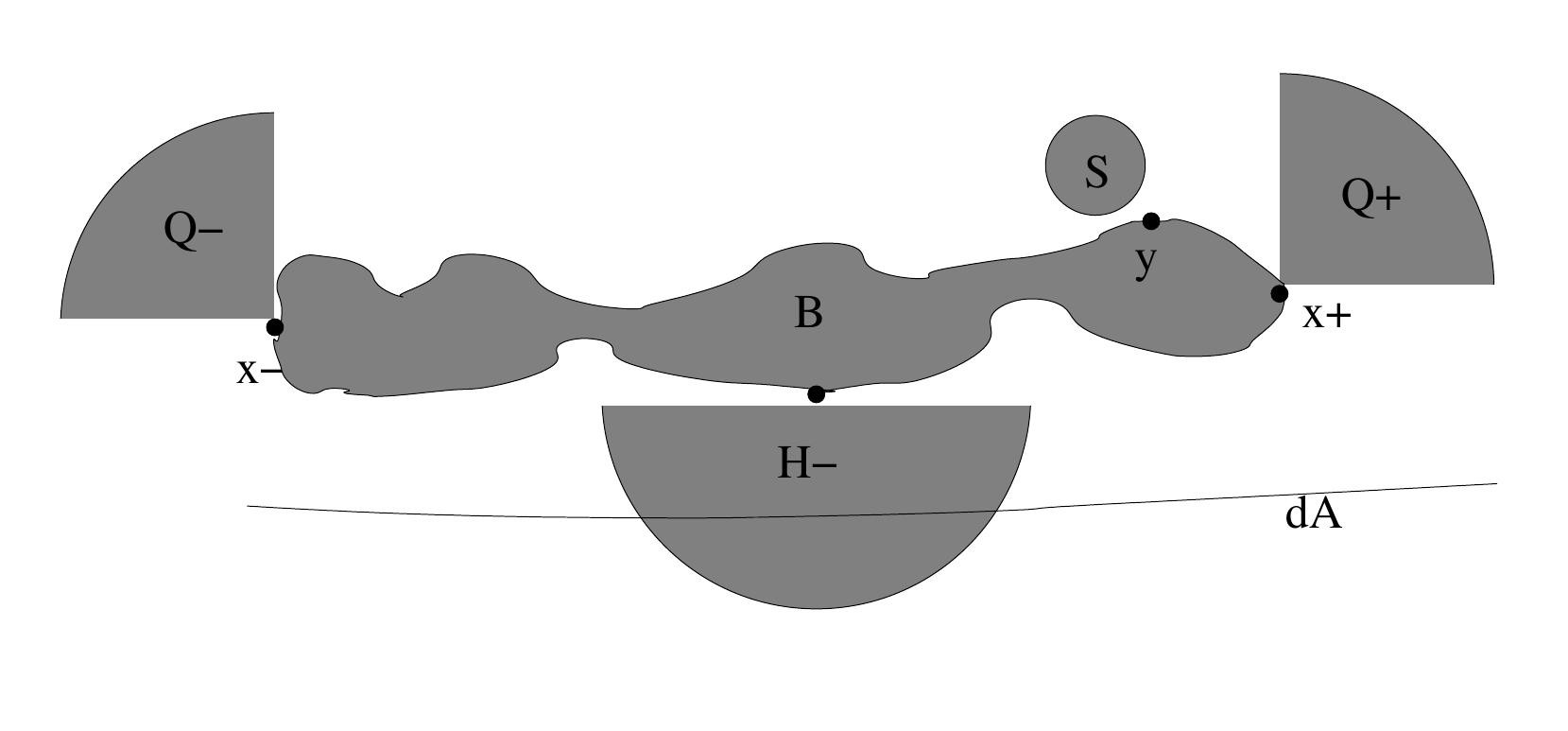

%LATER in commented lines indicates things we might come back to later
%eg non-convex polygons.

%\documentclass[11pt,article,reqno]{amsart}
\documentclass[12pt,reqno]{article}
\usepackage[utf8]{inputenc}
\usepackage{enumerate}
\usepackage{lmodern}
\usepackage[T1]{fontenc}
\usepackage{verbatim}
\usepackage{textcomp}

\usepackage[english]{babel}
\usepackage[a4paper,vmargin={3.5cm,3.5cm},hmargin={2.5cm,2.5cm}]{geometry}
\usepackage[font=sf, labelfont={sf,bf}, margin=1cm]{caption}
\usepackage[pdftex]{hyperref}
\usepackage[pdftex]{color,graphicx}
\usepackage{xcolor}

% AMS packages:
\usepackage{amsmath,amsfonts,amssymb,amsthm,mathrsfs,mathtools,bbm}
\usepackage{empheq}
% Theorems
%-----------------------------------------------------------------
\newtheorem{theorem}{Theorem}[section]
\newtheorem{corollary}[theorem]{Corollary}
\newtheorem{lemma}[theorem]{Lemma}
\newtheorem{proposition}[theorem]{Proposition}

\theoremstyle{remark}
\newtheorem{remark}[theorem]{Remark}
\theoremstyle{definition}

% Shortcuts.
%-----------------------------------------------------------------

%Lazy shortcuts

% Blackboard
\def\RR{\mathbb{R}}
\def\R{\mathbb{R}}

\def\NN{\mathbb{N}}

\def\PP{\mathbb{P}}

\def\EE{\mathbb{E}}

\newcommand{\1}{{\bf 1}}
% Greeks
\def\al{\alpha}
\def\be{\beta}

\def\la{\lambda}

\newcommand{\xxi}{\xi}
% Calligraphic
\newcommand{\cP}{\mathcal{P}}
\newcommand{\cX}{\mathcal{X}}
\newcommand{\cC}{\mathcal{C}}

\newcommand{\cY}{\mathcal{Y}}
\newcommand{\ytil}{\tilde{y}}

% Sans Serif

% Bold

\def\b0{\mathbf{0}}

\def\bN{\mathbf{N}}
% Metrics
\def\dtv{d_{\mathrm{TV}}}

\def\d2{d_2}

% Decorations
%\def\ov{\overline}

% Exotic Symbols

% Script

\newcommand{\bea}{\begin{eqnarray}}
\newcommand{\eea}{\end{eqnarray}}
\newcommand{\bean}{\begin{eqnarray*}}
\newcommand{\eean}{\end{eqnarray*}}
\newcommand{\eps}{\varepsilon}

\newcommand{\Po}{{\cal P}}
%Alternative $A_{n,\alpha}$.
\renewcommand{\emptyset}{\varnothing}

\newcommand{\fmax}{f_{\rm max}}
\newcommand{\Cor}{\mathsf{Cor}}
\newcommand{\A}{\mathsf{A}}
\newcommand{\tod}{\overset{d}\longrightarrow}
\newcommand{\toP}{\overset{\PP}\longrightarrow}

\renewcommand{\eta}{\cP}
\newcommand{\Gum}{{\mathsf{Gu}}}
\newcommand{\PRV}{{\mathsf{Po}}}

\numberwithin{equation}{section}

% Commands that take arguments.
%-----------------------------------------------------------------

% Typeset operators
%-----------------------------------------------------------------
%\DeclareMathOperator\supp{Supp}
%\DeclareMathOperator\hess{Hess}
%\DeclareMathOperator\Jac{Jac}
%\DeclareMathOperator\dom{dom}
%------------------------------------------------------------------

% Notations of the article
% -----------------------------------------------------------------
%\usepackage[notref, notcite]{showkeys}

\DeclareMathOperator\diam{diam}
\newcommand{\dist}{{\rm dist}}
%\setcounter{footnote}{1}
% -----------------------------------------------------------------

\begin{document}
\title{\bf Fluctuations of the connectivity threshold and largest  nearest-neighbour link}
%\thanks{Supported by EPSRC grant EP/T028653/1 }
\author{
	Mathew D. Penrose$^1$ and Xiaochuan Yang$^1$  \\
{\normalsize{\em University of Bath and Brunel University London}} 
}

\date{\today}

\maketitle
\setcounter{footnote}{1}
 \footnotetext{Supported by EPSRC grant EP/T028653/1 }

\begin{abstract}   
	Consider a random uniform sample of $n$ points in a compact region $A$ of Euclidean $d$-space, $d \geq 2$, with a smooth or (when $d=2$) polygonal boundary.  Fix $k \in \NN$.  Let $T_{n,k}$  be the threshold $r$ at which the geometric graph on these $n$ vertices with distance parameter $r$ becomes $k$-connected.  We show that if $d=2$ then $n (\pi/|A|) T_{n,1}^2 - \log n$ is asymptotically standard Gumbel.  For $(d,k) \neq (2,1)$, it is $n (\theta_d/|A|) T_{n,k}^d - (2-2/d) \log n - (4-2k-2/d) \log \log n$ that converges in distribution to a nondegenerate limit, where $\theta_d$ is the volume of the unit ball.  The limit is Gumbel with scale parameter 2 except when $(d,k)=(2,2)$ where the limit is two component extreme value distributed.  The different cases reflect the fact that boundary effects are more more important in some cases than others.  We also give similar results for the largest $k$-nearest neighbour link $U_{n,k}$ in the sample, and show $T_{n,k}=U_{n,k}$ with high probability.  We provide estimates on rates of convergence and give similar results for Poisson samples in $A$.  Finally, we give similar results even for non-uniform samples, with a less explicit sequence of centring constants.
\\
\noindent{\bf Keywords:} Connectivity threshold; weak limit;
	Poisson process; Gumbel distribution. \\
%\noindent{\bf AMS 2010 Classification:}
\end{abstract}
\tableofcontents

%LATER - {\bf Things still to do: Non-convex polygons(?)}

%Maybe `Limit distribution of the connectivity threshold' a better title?

\section{Introduction}

\subsection{Overview and motivation} 
This paper is concerned with the threshold at which the random geometric graph
becomes connected.  This graph is defined as follows.
Let $d \in \NN$.
Given a finite set $\cX \subset \R^d$, and $r>0$, the {\em geometric graph}
$G(\cX,r)$ has vertex set $\cX$, 
with an edge drawn between any two vertices at Euclidean distance
at most
$r$ from each other. We say $G(\cX,r)$ is \textit{$k$-connected} if it
is not possible to disconnect the graph by removing $k-1$ or fewer vertices.
(in particular, 1-connectivity is the same as connectivity). 
The {\em $k$-connectivity threshold} of $\cX$ is the number
\begin{align*}
	M_k(\cX) := \inf\{r>0: G(\cX,r) \mbox{ is $k$-connected} \}.
\end{align*}
An alternative characterisation of $M_1(\cX)$ is in terms
of  {\em minimal spanning
tree} (MST). A MST on $\cX$ is a tree spanning $\cX$ that minimises the 
total (Euclidean) length of the edges. It is not hard to see
that $M_1(\cX)$ equals the longest edge length of a MST on $\cX$.

For the {\em random} geometric graph,
the vertex set $\cX$
is given by the set of points of a Poisson point process $\eta_n$
in $\RR^d$ with intensity measure $n\nu$, where $\nu$ is a probability
measure on $\R^d$ with probability  density function $f:\RR^d\to [0,\infty)$,
and $n\in(0,\infty)$ is the mean number of vertices.
Alternatively, for $n \in \NN$ we can take $\cX = \cX_n$, 
where $\cX_n$ denotes 
a binomial point process 
whose points are $n$ independent random $d$-vectors
with common density $f$.
Since the vertices are placed randomly in $\RR^d$,  the thresholds
$M_k(\cX_n)$ and
\begin{align}
	M_{n,k} := M_k(\eta_n)
	= \inf\{r>0: G(\eta_n,r) \mbox{ is $k$-connected} \}
	\label{e:defM_n}
\end{align}
are  random variables.

In this paper we investigate the limiting behaviour of 
the connectivity threshold $M_{n,k}$ and $M_k(\cX_n)$  for large $n$ and fixed $k\in\NN$.
We assume throughout that $d \geq 2$.
We consider a broad class of measures $\nu$, subject to the
{\em working assumption} that  $\nu$ has compact support $A 
\subset \R^d$, and its density $f$ is continuous 
and bounded away from zero on $A$.
As  $n$ grows, the spacing between vertices becomes smaller, 
so one expects to have $M_{n,k}\to 0$ as $n\to \infty$.
A simple consideration by computing typical spacing of vertices leads to the belief that $M_{n,k}$ should decay more slowly than $n^{-1/d}$, in the sense that
$n M_{n,k}^d$ should tend to infinity in probability. 
In the special case where $\nu$ is the uniform distribution on $[0,1]^d$,
it is known \cite{Pen97,Pen99a,Pen}, that
there is an explicit sequence of centring constants $(a_n)_{n \geq 1}$
satisfying $a_n \to \infty$ as $n \to \infty$ such that 
\begin{align}\label{e:2nd_order}
nM_{n,k}^d -  a_n   \overset{d}\longrightarrow X; ~~~~~~
	nM_{k}(\cX_n)^d -  a_n   \overset{d}\longrightarrow X,
\end{align}
where $X$ is an explicit non-degenerate random variable. Clearly
\eqref{e:2nd_order} 
is what is needed to determine $\lim_{n \to \infty}
\PP[M_{n,k} \leq r_n]$ for any sequence $(r_n)_{n \geq 1}$ such that the
limit exists.

In the present paper, we show that (\ref{e:2nd_order}) holds
for suitable $a_n$ and $X$, for a broad class of measures $\nu$.
While one might perhaps anticipate that the limiting behaviour of the form
\eqref{e:2nd_order}
would carry over from the uniform distribution on $[0,1]^d$ to more general
$\nu$ satisfying our working assumption, proving this seems to be considerably
harder that one might naively expect, and 
in the last 20 years or so there has been limited progress in proving such
results.  It turns out that
even in the uniform case where $f$ is constant on $A$,
boundary effects are important because
the `most isolated' vertex is likely to lie near the boundary
when $d \geq 3$. Thus 
the formula for $a_n$, even when $\nu$ is uniform on $[0,1]^d$,
is quite complicated (see \eqref{e:cubelim} below) due to having
to consider all of the
different kinds of faces making up the boundary, and does
not necessarily provide much insight into the appropriate choice
of centring constants for other $A$. In the non-uniform case, 
determining appropriate constants $a_n$ is even harder because they depend 
in a delicate way on how $f$ approaches its minimum, both in
the interior and on the boundary of $A$.
 
 In this work we chiefly consider the case where $\partial A$ is smooth. 
 In the uniform case we determine an explicit sequence of centring constants
 $a_n$ such that (\ref{e:2nd_order}) holds for suitable $X$. In the non-uniform
 case we are still (in most cases) able to derive \eqref{e:2nd_order}
 on taking $a_n$ to be the median of the distribution  of $nM_{n,k}^d$.
 Part of our proof involves approximating $A$ 
 with a polyhedral set $A_n$ with the
 spacing between vertices of $A_n$ decreasing slowly as $n$ becomes large.
 This technique was developed recently for certain random coverage
 problems in \cite{Pen22,HPY23}, and its availability
 is one reason why this problem is more tractable 
 now than it was before.
 
 We also address the case where $d=2$ and $A$
 is polygonal; this case could be of importance for some
 applications, and it turns out that
 %boundary effects are less complicated for $d=2$ than for $d \geq 3$. 
 the effects of corners are asymptotically negligible for $d = 2$.
 We hope to deal with the case
 of polytopal $A$ in dimension $d \geq 3$ in future work.

 Understanding the connectivity threshold is important for a
 variety of applications. In telecommunications, the vertices could
 represent mobile transceivers and one might be interested in whether
 the network of transceivers is connected; see e.g. \cite{GK99}.
 In topological data analysis (TDA), detecting connectivity is a fundamental step for inspecting all other higher
dimensional topological features 
(here the dimension of the ambient space may be very high). See
also applications in machine learning
(clustering), statistical tests (e.g. for outliers), spatial
epidemics or forest fires (see the description in \cite{Pen97})
%todo:{\bf (more refs/details on applications here?)}.   

Note that (\ref{e:2nd_order}) implies the weaker statement
that the sequence $(nM_{n,k}^d -a_n)_{n \geq 1}$ is tight as $n \to \infty$,
and even in the (rather exceptional) cases where we cannot prove
(\ref{e:2nd_order}), we shall prove this weaker statement.
Tightness in turn implies $nM_{n,k}^d/a'_n \to 1$ in probability as
$n \to \infty$, for any sequence $(a'_n)_{n \geq 1}$ satisfying
$a'_n/a_n \to 1$ as $n \to \infty$.
Another direction of research (not followed
in the present paper) is to improve this
to  almost sure convergence, i.e. a strong law of large numbers (SLLN),
 under the natural coupling of $(\cX_n, n \geq 1)$:
\begin{align}\label{e:1st_order}
	\frac{nM_{k}(\cX_n)^d}{a'_n} \overset{a.s.}\longrightarrow 1,
\end{align}
or to establish (\ref{e:1st_order}) for some $a'_n$
even in cases when (\ref{e:2nd_order})
is not known; see \eqref{e:SLLN_smooth}, 
\eqref{e:SLLN_cube}
below.
%In previous work, \cite{Pen99b, PYpoly} provide a SLLN in
%the case of $A$ having a smooth boundary, or $A$ polytopal,
%with $\nu$ not necessarily uniform on $A$,
%but no result along the lines
%of \eqref{e:2nd_order}.
\subsection{The largest $k$-nearest-neighbour link}
\label{ss:LNNL}

Given $x \in \R^d$ and $r >0$, 
we denote the closed Euclidean ball of radius $r$, centred at $x$,
 by either  $B_r(x)$ or $B(x,r)$.
 Given finite $\cX \subset \R^d$ we define 
 the {\em largest $k$-nearest-neighbour link} $L_k(\cX)$ of $\cX$,
 by
\begin{align}
	 L_k(\cX)
	:= \begin{cases}
		\max_{x\in\cX} \left(
	%	\min_{\cY \subset \cX \setminus \{x\}: |\cY| = k}
	%	\left( \max_{y \in \cY } \|y-x\| \right) 
		\inf\{r >0: \cX(B_r(x)) > k\}
		\right)
		&{\rm if} ~ |\cX| \geq k+1 ,
	%	\\
	%	\max_{x\in\cX} \left(
	%	\min_{\cY \subset \cX \setminus \{x\}: |\cY| = k}
	%	\left( \max_{y \in \cY } \|y-x\| \right) \right) 
	%	&{\rm if} ~ |\cX| \geq k+1 ,
		\\
		0 & {\rm otherwise,}
	\end{cases} 
	\label{e:defL_n}
\end{align}
where $|\cX|$ denotes the number of elements of $\cX$ and
$\cX(\cdot):= |\cX \cap \cdot|$ denotes the counting measure associated with 
$\cX$.
Note that if $|\cX| \geq k+1$ and $x \in \cX$, 
then $\inf \{r >0: \cX(B_r(x)) >k\}$
%$ 
%		\min_{\cY \subset \cX \setminus \{x\}: |\cY| = k}
%\left( \max_{y \in \cY } \|y-x\|  \right)$
is the distance from $x$ to its $k$-nearest neighbour in $\cX$.
Note also that $L_k(\cX) \leq M_k(\cX)$ since
if $r < L_k(\cX)$ and $|\cX| \geq k+1$, then
$G(\cX,r)$ has at least one vertex with degree less than $k$ and therefore is not 
$k$-connected, so $r \leq M_k(\cX)$.

 Our analysis of $M_{n,k}$ and $M_k(\cX_n)$ will involve first investigating
 $L_{n,k}:= L_k(\eta_n)$ and $ L_k(\cX_n)$.
{\em A priori}, it is not obvious that $L_k(\cX_n)$ is a sharp
lower bound for $M_k(\cX_n)$;
nevertheless, it is known in some cases (see Section \ref{ss:lit})
that $M_k(\cX_n)$ enjoys the same limiting behaviour as $L_k(\cX_n)$, and even
sometimes that
\begin{align}
	\lim_{n \to \infty} \PP[ L_k(\cX_n) = M_k(\cX_n) ] = 1.
\label{e:LM}
\end{align}
Equation (\ref{e:LM}), when true, 
says that with probability tending to 1 as $n \to \infty$
the point set $\cX_n$ has the following property:
If we start with no edges between the vertices of $\cX_n$,
and then add edges one by one in order of increasing Euclidean length
until we arrive at a $k$-connected graph, then
just before the addition of the last edge we still have a 
 vertex of degree less than $k$; if $k=1$ then
just before the addition of the last edge we have exactly two
components, one of which is a singleton.

The largest $k$-nearest neighbour link 
$L_k(\cX_n)$
(or $L_k(\eta_n)$)
is of interest in its own right. To quote \cite{DH89}, it
`comes up in almost all discussions of computational complexity
involving nearest neighbours'. See
e.g. \cite{Pen97} for further motivation.
As with $M_{n,k}$, its limiting behaviour 
has previously been studied on the torus, and the unit cube,
and only at the level of a SLLN for regions with smooth or polytopal boundary.
By providing a limiting distribution for $L_k(\cX_n)$ for regions
with smooth or polygonal boundary,
we here add significantly to this body of work, too.

\subsection{Literature review}
\label{ss:lit}

Before stating our main results, let us give a literature review on this topic.
Under our working assumption (WA), we use throughout the notation 
\begin{align}
 f_0 := \inf_{x \in A} f(x); ~~~~~  f_1 : = \inf_{x \in  \partial A} f(x);
 ~~~~~
\fmax := \sup_{x \in A} f(x).
	\label{e:f0f1fmax}
\end{align}
Note that $0 < f_0 \leq f_1 \leq \fmax < \infty$ under our WA.
Let  $\theta_d$
denote the volume of a $d$-dimensional unit ball. 
 i.e. $ \theta_d := \pi^{d/2}/\Gamma (1+d/2)$,

In the case where 
$A$ is the flat torus $\mathbb{T}^d$ of any dimension, it is known
\cite[Theorem 13.6]{Pen} that under the natural coupling of $(\cX_n, n \geq 1)$
we have
\begin{align*}
	\lim_{n \to \infty} \left( 
	\frac{\theta_d n (M_k(\cX_n))^d}{\log n} \right) =
	\frac{1}{f_0} \quad a.s.
\end{align*}

The dimensionality and the density play a crucial role when one considers compact sets with boundaries. More precisely, if 
$A$ has a smooth boundary, it is proved
for $k=1$ in  \cite{Pen99c,Pen99b}, and for general $k$ in \cite{Pen}, that   
\begin{align}
	\label{e:SLLN_smooth}
	\lim_{n \to \infty} \left( \frac{\theta_d n (M_k(\cX_n))^d}{\log n} \right)  
	= \lim_{n \to \infty} \left( \frac{\theta_d n (L_k(\cX_n))^d}{\log n}
	\right)  
	=
	\max\Big(\frac{1}{f_0}, \frac{2-2/d}{f_1} \Big) \quad a.s.
\end{align}
When
$A$ is a convex polytope, it is proved in \cite{PYpoly} that
 \begin{align}\label{e:SLLN_cube}
	 \lim_{n \to \infty} \left(
	 \frac{ n (M_k(\cX_n))^d}{\log n} \right)
	 = \lim_{n \to \infty} \left(
	 \frac{ n (L_k(\cX_n))^d}{\log n} \right)
	 = \max_{\varphi \in \Phi^*(A) } 
	 \Big(\frac{ D(\varphi)}{f_\varphi \rho_\varphi d}  \Big) 
	 \quad a.s.
\end{align}
where $\Phi^*$ denotes the collection of all faces of $\varphi$ of all
dimensions (including $A$ itself, considered as a face of dimension 
$d$), and where $D(\varphi)$ is the dimension of face $\varphi$, 
and where $f_\varphi$ denotes the infimum of $f$ over face $\varphi$
and $\rho_\varphi$ is the angular volume of face $\varphi$.

Less is known about the fluctuations of $nM_{k}(\cX_n)^d - a_n$.
Weak limit results of the type \eqref{e:2nd_order} are proved for two cases in \cite{Pen97, Pen99a}. 
The first case is when $f$ is uniform on $\mathbb{T}^d$ for any $d \geq 2$.
In this case, by \cite[Corollary 13.20]{Pen}, one has 
\begin{align*}
	\theta_d n (M_k(\cX_n))^d - \log n
	- (k-1) \log \log n+ \log ((k-1)!)
	\overset{d}\longrightarrow  \Gum,
\end{align*}
where $\Gum$ denotes a standard Gumbel random variable,
i.e.  one with cumulative distribution function 
$F(x)= \exp(-e^{-x}), x \in \R$.
(For $a \in \R, b >0$ the random variable
$b \Gum + a$ is said to be Gumbel distributed
with scale parameter $b$ and location parameter $a$.)

The second case is when $f$ is uniform on $[0,1]^d$.
For this case,
we describe only the results for $k=1$ from \cite{Pen}
but the
case of general $k$ is also treated there.
When $f$ is uniform on $[0,1]^d$, one has
by \cite[Corollary 13.21]{Pen} that
\begin{align}
	2^{2-d} \theta_d n (M_1(\cX_n))^d -
	(2/d)\log n +
	(d -3 + 2/d) \log \log n
	\nonumber
	\\
	+ \log \left(
	\Big(
	\frac{2^{2-2/d}}{d(d-1)} \Big) (\theta_d d)^{3-d-2/d} 
	\theta_{d-1}^{d-2}   \right)
	\overset{d}\longrightarrow  \Gum.
	\label{e:cubelim}
\end{align}
 Similar results hold
for $L_k(\cX_n)$; see \cite[Theorems 8.3 and 8.4]{Pen}.
Moreover it is known that (\ref{e:LM}) holds.
Also these results hold
for $\eta_n$ instead of $\cX_n$. \\

So far as we know, there is no weak limit result for other shaped boundaries
or for non-uniform distributions
(until now). In the special case where
$d=2$ and $f$ is uniformly distributed in a  disk,
\cite{GK99} gives a partial result in the direction of a weak limit.

The main results of this paper considerably generalise previous findings and deepen the understanding of the connectivity threshold in terms
of the geometry of $A$. In the uniform case, we also provide a bound
on the rate of
convergence that is new for all shapes under the WA. \\

We end this section by mentioning some related results. It is natural to ask
what happens if we drop the working assumption and take the support of $f$ to
be unbounded. Penrose \cite{Pen98} found that the scaling is completely
different in the case of standard Gaussian density in $\RR^d$; see also Hsing and Rootz\'en \cite{HR05} for a significant extension in dimension two, where a class of elliptically contoured distributions are included, e.g.  Gaussian densities with correlated coordinates. 
Gupta and Iyer \cite{GI10} give a limiting distribution and SLLN
for $L_{n,1}$ for a class of radially
symmetric densities with unbounded support, including cases where
$L_{n,1}$ (and hence also $M_{n,1}$) does not even tend to zero.

\subsection{Main results}
\label{ss:mainres}

Throughout this paper, $c$ and $c'$ denote positive finite
constants whose values may vary from line to line and do not depend
on $n$.
Also if $n_0 \in (0,\infty)$ and 
$f(n),g(n)$ are two functions, defined for all $n \geq n_0$ 
with $g(n) >0$ for all $n \geq n_0$,
the notation $f(n)= O(g(n))$ as $n \to \infty$
means that $\limsup_{n \to \infty} (|f(n)|/g(n)) < \infty$.
If also $f(n) >0$ for all $n \geq n_0$, we use notation
$f(n)= \Theta(g(n))$ to mean that both $f(n)=O(g(n))$
and $g(n)= O(f(n))$.

Given $d,k \in \NN$, define the constant
\bea
c_{d,k} := \theta_{d-1}^{-1}  \theta_d^{1-1/d} (2- 2/d)^{k-2 + 1/d} 2^{1-k}
/(k-1)!
\label{e:defcd}
\eea
In this paper, given $A \subset \R^d$, 
we say that $A$ has {\em $C^2$ boundary}
(or for short: $\partial A \in C^2$)
if for each $x\in\partial A$,
the topological boundary of $A$,
there exists a rigid motion $\rho_x$ of
$\R^d$, an open set $U_x\subset \RR^{d-1}$ and a $C^2$ function
$f_x :\RR^{d-1}\to\RR$ such that $\rho_x(A \cap U) = \rho_x(U)
\cap \mathrm{epi}(f_x) $,
where $\mathrm{epi}(f_x):= \{(u,z) \in \R^{d-1} \times \R: z \ge f_x(u)\}$,
the closed epigraph of $f_x$.

For compact $A \subset \R^d$ with $C^2$ or polytopal boundary,
let $|A|$ denote the volume (Lebesgue measure) of $A$, and
$|\partial A|$ the perimeter of $A$, i.e. the $(d-1)$-dimensional
Hausdorff measure of $\partial A$.
Also define
\bea
\sigma_A := \frac{|\partial A|}{|A|^{1-1/d}}.
\label{e:defsigA}
\eea
%todo: decide whether change sigma_A for something else. there are sigma's down the line for a possible shape of discretisation of small clusters. maybe ok.
Note that $\sigma_A^{d}$ is sometimes called the
{\em isoperimetric ratio} of $A$, and is at least $d^d \theta_d$
by the isoperimetric inequality.

%We write $\overline{A^o}$ for the closure of the interior of $A$.
%We shall always assume that $\overline{A^o}= A$.

%TODO: check again whether convexity is necessary in 2d polygonal case
\begin{theorem}[Weak limits in the uniform case] \label{t:smooth}
	Suppose  either that $d \geq 2 $ and $A$ a compact subset of
	$\RR^d$ with $C^2$ boundary,
	%and with $\overline{A^o} = A$,
	or that $d=2$ and $A$ is
	a convex polygon.  Let  $f \equiv f_0 \1_A$ with $f_0=|A|^{-1}$.
	Let $\beta \in \R$.  Then, if $d =2$, we have as $n \to \infty $ that
	\begin{align}
		\PP[ nf_0 \pi M_{1}(\cX_n)^2
		-\log n \le  \be ] = \exp \Big( -   
		\frac{\sigma_A \pi^{1/2} e^{-\beta/2}}{2 (\log n)^{1/2}} \Big)
		e^{-e^{-\beta}}
		+O( (\log n)^{-1} );
		\label{e:MXweak}
		\\
		\PP[ nf_0 \pi M_{n,1}^2 -\log n \le  \be ] = \exp \Big( -   
		\frac{\sigma_A \pi^{1/2} e^{-\beta/2}}{2 (\log n)^{1/2}} \Big)
		e^{-e^{-\beta}}
		+O( (\log n)^{-1} ).
		\label{e:Mnweak}
\end{align} 
	Also, given $k \in \NN$,  set
	\begin{align*}
		u_{n,k} & := \PP[ n \theta_d f_0 M_{n,k}^d - (2-2/d) \log n +
		(4-2k-2/d) \log \log n \leq \beta];
		\\
	u'_{n,k} & := \PP[ n \theta_d f_0 M_{k}(\cX_n)^d - (2-2/d) \log n +
		(4-2k-2/d) \log \log n \leq \beta].
	\end{align*}
	If $d= 2$ then as $n \to \infty$
	\begin{align}
		u_{n,2} = \exp \Big(- \frac{\pi^{1/2}\sigma_A e^{-\beta/2}
		\log \log n}{8 \log n}  - \frac{e^{-\beta} \log \log n}{\log n}
		\Big)
		\exp \Big(- e^{-\beta}- 
		\frac{\pi^{1/2} \sigma_A e^{-\beta/2}}{4} \Big)
		\nonumber \\
		+ O\big(\frac{1}{\log n} \big),
		~~~~~~~~~
		\label{e:limu22}
	\end{align}
	and likewise for $u'_{n,2}$.
	If $d \geq 3$, or if $d=2, k \geq 3$
we have as $n \to \infty$ that
\begin{align}
	\label{e:limudhi}
	u_{n,k} = \exp \Big( - \frac{c_{d,k} \sigma_A e^{-\beta/2}
	(k-2+1/d)^2 \log \log n}{(1-1/d)\log n} 
	\Big) \exp(- c_{d,k} \sigma_A e^{-\beta/2} ) +
	O\big( \frac{1}{\log n} \big), 
\end{align}
	and likewise for $u'_{n,k}$. 
	Also \eqref{e:LM} holds, and all of 
	 the above results 
	hold with $L_{n,k}$ (resp.  $L_{k}(\cX_n)$)
	instead of $M_{n,k}$ (resp.  $M_{k}(\cX_n)$).
\end{theorem}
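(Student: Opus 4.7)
The plan is first to prove the weak limits for the Poisson point process $\eta_n$ and for the largest $k$-nearest-neighbour link $L_{n,k}$, and then (i) de-Poissonise to pass to the binomial point process $\cX_n$, and (ii) lift the $L$-statements to $M$-statements by establishing \eqref{e:LM}. Since $L_k(\cdot) \leq M_k(\cdot)$ always, once \eqref{e:LM} is in place all four weak limits follow from a single computation for the Poisson version of $L_{n,k}$.

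The central step is to analyse $\PP[L_{n,k} \leq r] = \PP[\xi_{n,r} = 0]$, where
\[
\xi_{n,r} := \sum_{x \in \eta_n} \1\{ \eta_n(B_r(x) \setminus \{x\}) \leq k-1 \}
\]
counts vertices of degree less than $k$ in $G(\eta_n, r)$. I would apply a Chen--Stein type Poisson approximation for Poisson functionals to obtain
\[
\PP[\xi_{n,r} = 0] = \exp(-\mu_{n,r}) + O((\log n)^{-1}), \qquad \mu_{n,r} := \EE[\xi_{n,r}],
\]
with the error controlled, via the Slivnyak--Mecke formula, by the second factorial moment. The key structural observation is that any two well-separated contributors to $\xi_{n,r}$ are conditionally almost independent, so the second factorial moment is $\mu_{n,r}^2(1 + o(1))$, giving the stated rate.

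The asymptotics of $\mu_{n,r}$ follow from Mecke's formula
\[
\mu_{n,r} = n f_0 \int_A \PP[\mathrm{Pois}(n f_0 |B_r(x) \cap A|) \leq k-1]\, dx,
\]
which I would split into an interior part (where $B_r(x) \subset A$) and a boundary layer of width $r$. Writing $\lambda := n \theta_d f_0 r^d$ and inserting the centring prescribed by the theorem, the interior contribution simplifies to $n e^{-\lambda}\lambda^{k-1}/(k-1)!$, yielding $e^{-\beta}$ when $(d,k) \in \{(2,1),(2,2)\}$ and being negligible otherwise. For the boundary layer with $\partial A \in C^2$, I would use the tubular parametrisation $x = y - s\nu(y)$ with $y \in \partial A$, $s \in [0, r]$, for which the Jacobian is $1 + O(s)$ and $|B_r(x) \cap A|/r^d = G(s/r) + O(r)$, where $G:[0,1] \to [\theta_d/2, \theta_d]$ satisfies $G(0) = \theta_d/2$ and $G'(0) = \theta_{d-1}$ (the half-space asymptotics). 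The weight $e^{-nf_0 r^d G(s/r)}$ is sharply peaked near $s = 0$ on scale $1/\lambda$, and a Laplace-type evaluation produces a boundary contribution of the form $\sigma_A c_{d,k} e^{-\beta/2}$ together with the explicit $(\log\log n)/(\log n)$ and $(\log n)^{-1/2}$ corrections in \eqref{e:MXweak}--\eqref{e:limudhi}. For polygonal $A$ in $d = 2$ the corner regions contribute at most $O(n^{-\alpha/(2\pi)} \log n)$ with $\alpha > 0$ the smallest interior angle, which is $o((\log n)^{-N})$ for every $N$ and hence absorbed in the error.

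To establish \eqref{e:LM} I would bound the expected number of configurations for which $G(\eta_n, r)$ has minimum degree $\geq k$ yet fails to be $k$-connected: for $k = 1$ this means two components each of size $\geq 2$ separated by distance $> r$, and for general $k$ it means a vertex cut-set of size $< k$ between large components. Standard Palm/Mecke computations give expected count $O(n^{-c})$ for some $c > 0$, comfortably beyond the $O(1/\log n)$ target. De-Poissonisation is then standard, using the coupling $||\eta_n| - n| = O_p(\sqrt n)$ together with near-monotonicity of $L_k$ and $M_k$ under addition of points, contributing only $O(n^{-1/2})$. The main obstacle I anticipate is achieving the stated $O((\log n)^{-1})$ rate: this requires (a) refining the Poisson approximation so that the second-factorial-moment error is tracked more carefully than the crude Chen--Stein bound allows, exploiting the fact that the dominant contributors to $\xi_{n,r}$ lie within distance $r$ of $\partial A$ and at mutual distance $\gg r$, and (b) expanding the boundary Laplace integral to the next order so as to extract the $(\log\log n)/(\log n)$ corrections that appear in the delicate case $(d,k) = (2,2)$ where interior and boundary contributions are simultaneously of order one and interact.
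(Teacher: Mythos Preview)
Your overall architecture matches the paper's: Poisson approximation for $\xi_{n,r}$, precise asymptotics for $\mu_{n,r}=\EE[\xi_{n,r}]$ via an interior/boundary decomposition, transfer from $L$ to $M$, and de-Poissonisation. The expectation computation you sketch is essentially what is carried out (the paper imports it from \cite{HPY23}), and the Chen--Stein/Mecke route to $\dtv(\xi_{n,r_n},\PRV_{\mu_{n,r_n}})=O((\log n)^{1-d})$ is exactly Proposition~\ref{p:poisson2d}.

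There is, however, a real gap in your treatment of $\{L_{n,k}\le r<M_{n,k}\}$. You assert that ``standard Palm/Mecke computations give expected count $O(n^{-c})$''. This is not correct, and in dimension $d=2$ the true order is only $\Theta((\log n)^{-1})$, i.e.\ exactly the error term in the theorem. The bottleneck is \emph{small} non-singleton separating sets: a pair $\{x,y\}$ with $\|x-y\|$ of order $(nr^{d-1})^{-1}$, with $x$ near $\partial A$ and $y$ slightly further in, can have $\nu(B_r(x)\cup B_r(y))$ exceeding $\nu(B_r(x))$ by only $\Theta(\|x-y\|r^{d-1})$. Integrating the resulting probability over $y$ produces a factor $(nr^d)^{1-d}$, not $n^{-c}$; see Lemmas~\ref{l:smallK}--\ref{l:medK} and Proposition~\ref{p:L<M}. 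Getting even this rate requires the ordering trick $x\prec y$ (closest-to-boundary first) and the geometric lower bound $|A\cap B_r(y)\setminus B_r(x)|\gtrsim r^{d-1}\|y-x\|$ of Lemma~\ref{lemgeom1a}(ii), together with a separate treatment of medium-diameter separating sets (Lemma~\ref{l:Med2}) and large separating pairs (Lemma~\ref{l:2large}). None of this is ``standard'', and without it the $O((\log n)^{-1})$ error in \eqref{e:MXweak}--\eqref{e:limudhi} cannot be achieved.

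A second, smaller gap: your de-Poissonisation via ``near-monotonicity of $L_k$ and $M_k$ under addition of points'' does not work as stated, since adding a point can \emph{increase} $L_k$ (if the new point is itself poorly connected). The paper instead bounds the symmetric difference $\{L_{n^-,k}\le r_n\}\,\triangle\,\{L_k(\cX_n)\le r_n\}$ by explicit events $E_n\cup F_n\cup G_n$ controlling, respectively, new near-isolated points, old near-isolated points gaining a neighbour, and $|N_n-n|>n^{3/4}$; see Lemma~\ref{l:dePo}. Finally, \eqref{e:LM} itself is not a consequence of $\PP[L\le r_n<M]\to 0$ for a single sequence $r_n$: one must also rule out $\{r_n(\beta_{i-1})<L<M\le r_n(\beta_i)\}$ along a fine mesh of $\beta$'s, which the paper handles with the squeezing Lemma~\ref{l:squeeze}.
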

%	todo:{\bf (Add to this:   Non-convex polygons?)}

\begin{remark}
\begin{enumerate}[i)]
	\item The statements about $L_{n,k} $ and $ L_{k}(\cX_n)$ in this theorem 
	are spelt out  in Corollaries
	\ref{c:nnlink2d} and \ref{c:nnlink3d+}. 
	\item These results imply certain convergence in distribution results.
	Namely, $n \theta_d f_0 M_{n,k}^d$, suitably centred,
	is asymptotically Gumbel distributed with scale parameter $1$ when $d=2, k=1$
	but with scale parameter $2$ when $d \geq 3$ or $d=2,k \geq 3$.
	When $d=2,k=2$ the limiting distribution is a so-called
	{\em two-component extreme value distribution} (TCEV), that
	is, a probability distribution with cumulative distribution
	function (cdf) given by the product
	of two Gumbel cdfs with different scale parameters,
	in this case 1 and 2.
	The terminology TCEV was introduced in the hydrology literature
	\cite{Rossi}.
	\item 
	We have included a multiplicative correction factor
	in each of \eqref{e:MXweak}--\eqref{e:limudhi},
	namely the first factor in the right hand side in
	each case,
	because this factor tends to 1 very slowly, especially
	in \eqref{e:MXweak} and 
	\eqref{e:Mnweak} where the correction factor
	is $1+ O((\log n)^{-1/2})$ so for moderately large
	values of $n$ the limiting
	Gumbel distribution with cdf $e^{-e^{-\beta}}$ is
	not very close to the centred distribution of
	$n f_0 \theta_dM_{n,1}^d$.
	If $ d \geq 3$ it is possible to give some extra terms
	in the correction and improve the error bound to
	$O((\log n)^{\eps -2})$. 
%	todo:{\bf [More details somewhere?]} 
	\item The error bounds above are for fixed $\beta$ but we would
	need to make them uniform in $\beta$ for an error bound in the
	Kolmogorov distance  between probability distributions.
	We do
	not address this in this paper. 
\item It seems likely that our results carry over to the case where
	$d=2$ and $A$ is a non-convex polygon. Our main
		reason for restricting attention to polygons that are convex its
		that some of our arguments in Section \ref{s:RelateLM}
		are based on results from \cite{PYpoly} that
		are stated there only for convex polytopes.
\end{enumerate}		
\end{remark}

We now give a result for the general non-uniform case; that
is, we still use our WA on $f$, but drop the stronger
assumption that $f$ is constant on $A$. 
Recall $f_0, f_1$ defined at \eqref{e:f0f1fmax}.
In this more general
case, subject to the condition 
$f_1 \neq f_0(2-2/d)$,
we still provide a result along the lines of
(\ref{e:2nd_order}), but now, instead of using the explicit centring constants
$a_n = (2-2/d)\log n - (4-2k-2/d) {\bf 1}\{d \geq 3~ {\rm or} ~ k 
\geq 2\} \log \log n$
as in Theorem \ref{t:smooth},
we take $a_n$ to be the median of the distribution $n M_n^d$. 
In the case $f_1= f_0(2-2/d)$ we prove only the weaker result
that  our sequence of centred random variables is tight.

Given a random variable $X$, 
let $\mu(X):= \inf\{x \in \R:\PP[X \leq x] \geq 1/2\} $,  the median 
of  the distribution of $X$. 
%Let $\Gum$ be a standard Gumbel random variable.
Note that $\mu(\Gum) = -\log(\log 2)$,
so for $\alpha >0$, the random variable
$\alpha (\Gum+\log (\log 2))$ has a Gumbel distribution
with median 0 and with scale parameter $\alpha$.

\begin{theorem}[Weak limit in the non-uniform case] 
	\label{t:nonunif}
	Suppose our working assumption applies,   either with
	$d \geq 2 $ and $A$ a compact subset of
	$\RR^d$ with $C^2$ boundary,
	%and with $\overline{A^o} =A$,
	or with $d=2$ and $A$ 
	a convex  polygon. Let $k \in \NN$.

	(i)
	If $f_1 > f_0(2-2/d)$, then  as $n \to \infty$,
\begin{align}
	n M_{k}(\cX_n)^d - n \mu(M_k(\cX_n))^d \tod (\theta_d f_0)^{-1}
	(\Gum+ \log \log 2);
	\label{e:limnonu2}
	\\
	n L_{k}(\cX_n)^d - n \mu(L_k(\cX_n))^d \tod (\theta_d f_0)^{-1}
	(\Gum+ \log \log 2);
	\label{e:limnonu4}
	\\
	n M_{n,k}^d - n \mu(M_{n,k})^d \tod (\theta_d f_0)^{-1}
	(\Gum+ \log \log 2);
	\label{e:limnonu1}
	\\
	n L_{n,k}^d - n \mu(L_{n,k})^d \tod (\theta_d f_0)^{-1}
	(\Gum+ \log\log 2).
	\label{e:limnonu3}
\end{align}

	(ii)
	If $f_1 < f_0(2-2/d)$, then  as $n \to \infty$,
\begin{align}
	n M_{k}(\cX_n)^d - n \mu(M_{k}(\cX_n))^d \tod (2/(\theta_d f_1))
	(\Gum+ \log \log 2);
	\label{e:limnonu6}
	\\
	n L_{k}(\cX_n)^d - n \mu(L_{k}(\cX_n))^d \tod (2/(\theta_d f_1))
	(\Gum+ \log \log 2).
	\label{e:limnonu8}
	\\
	n M_{n,k}^d - n \mu(M_{n,k}^d) \tod (2/(\theta_d f_1))
	(\Gum+ \log \log 2);
	\label{e:limnonu5}
	\\
	n L_{n,k}^d - n \mu(L_{n,k})^d \tod (2/(\theta_d f_1))
	(\Gum+ \log \log 2);
	\label{e:limnonu7}
\end{align}
	(iii) In all cases, 
including when $f_1 = f_0(2-2/d)$, \eqref{e:LM} holds, and also
	the family of random variables 
	$(n (M_{n,k}^d- \mu(M_{n,k})^d))_{n \geq 1}$ is tight.
	%todo:{\bf [$n$ running through only integers?]}.
	Likewise the collection of random variables
	$(n (L_{n,k}^d- \mu(L_{n,k})^d))_{n \geq 1}$ is tight,
	as are the sequences 
	$(n (M_{k}(\cX_n)^d- \mu(M_{k}(\cX_n))^d))_{n \geq 1}$ and
	$(n (L_{k}(\cX_n)^d- \mu(L_{k}(\cX_n))^d))_{n \geq 1}$.

	%todo:{\bf  Non-convex polygons? }
\end{theorem}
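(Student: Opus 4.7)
The proof hinges on a Poisson approximation for the count of ``bad'' vertices, applied first to $\eta_n$ and then transferred to $\cX_n$ by depoissonisation. Set
\[
	\xi_r := \sum_{x \in \eta_n} \mathbf{1}\{\eta_n(B(x,r)\setminus\{x\}) < k\},
\]
so that $\{L_{n,k} \le r\} = \{\xi_r = 0\}$. By the Mecke formula,
\[
	\psi(r) := \EE[\xi_r] = \int_A nf(x)\, \PP\Big[\mathrm{Po}\Big(n\!\int_{B(x,r)\cap A}\!f(y)\,dy\Big) \le k-1\Big]\, dx.
\]
A Chen--Stein Poisson approximation (using local dependence of $\xi_r$ at scale $r$) should give $\PP[\xi_r = 0] = e^{-\psi(r)}(1+o(1))$ uniformly for $r$ in a window $r^d \in [\mu^d - T/n,\, \mu^d + T/n]$ and any fixed $T$, where $\mu := \mu(L_{n,k})$. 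The binomial version is then handled by the standard coupling $|\eta_n| - n = O(\sqrt{n})$ in probability.

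\textbf{Splitting $\psi(r)$.} Next I would split $\psi(r)$ into an \emph{interior} contribution (from $x$ with $\dist(x,\partial A) \ge r$, where $|B(x,r) \cap A| = \theta_d r^d$) and a \emph{boundary} contribution (from the strip of width $r$ along $\partial A$). Using the expansion $|B(x,r) \cap A| = \tfrac12 \theta_d r^d + \theta_{d-1} r^{d-1}\dist(x,\partial A) + O(r^{d+1})$ valid in the $C^2$ case (with negligible corner terms when $d=2$ and $A$ is polygonal), a Laplace integral in the normal direction shows that the interior part has order $n(nr^d)^{k-1} e^{-nf_0 \theta_d r^d}$ and the boundary part has order $n^{(d-1)/d}(\log n)^{k-1-(d-1)/d}\, e^{-nf_1 \theta_d r^d/2}$. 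Balancing $\psi(r) = \Theta(1)$, interior balance gives $n r^d \sim (\log n)/(f_0 \theta_d)$ while boundary balance gives $n r^d \sim (2-2/d)(\log n)/f_1$; exactly one term dominates in each of (i) and (ii) according as $f_1 > f_0(2-2/d)$ or $f_1 < f_0(2-2/d)$.

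\textbf{Sensitivity at the median and the Gumbel limit.} In case (i), a Laplace argument localised near $\{f = f_0\}$ shows that for $r_n^d = \mu^d + t/n$ one has $\psi(r_n) = \psi(\mu)\, e^{-\theta_d f_0 t}(1+o(1))$: only the exponential factor $e^{-nf(x)\theta_d r^d}$ is sensitive to the shift in $r^d$, and it is maximised where $f(x) = f_0$, so the polynomial prefactor integrals are unchanged to leading order. Combined with $\PP[L_{n,k} \le \mu] = \tfrac12 + o(1)$ and the Poisson approximation $\PP[L_{n,k} \le \mu] = e^{-\psi(\mu)} + o(1)$, we obtain $\psi(\mu) \to \log 2$, hence
\[
	\PP[nL_{n,k}^d - n\mu^d \le t] \longrightarrow \exp\big(-(\log 2)\, e^{-\theta_d f_0 t}\big),
\]
which is equivalent to \eqref{e:limnonu3}. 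Case (ii) is symmetric, with boundary contributions controlling sensitivity and producing scale $2/(\theta_d f_1)$. For the borderline case (iii), although $\psi(\mu)$ and the sensitivity coefficient both depend on unresolved fine properties of $f$ near its minima, the two-sided estimate $c^{-1} \le \psi(r) \le c$ on the above window, combined with monotonicity of $r \mapsto \xi_r$, yields tightness.

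\textbf{Passing from $L$ to $M$; the main obstacle.} Finally, \eqref{e:LM} follows from a small-component analysis in the vein of \cite[Ch.~13]{Pen} and \cite{PYpoly}: once every vertex has degree at least $k$, any $k$-separator would force an isolated cluster of $\Theta(1)$ vertices at scale $r \approx L_{n,k}$, an event of vanishing probability under the WA. Combining with $L_{n,k} \le M_{n,k}$ transfers the Gumbel limits and tightness from $L$ to $M$, and from the Poisson model to the binomial model by the coupling above. The hardest step is carrying out the Laplace/Chen--Stein bookkeeping uniformly in $r$ in the non-uniform setting: the prefactors of $\psi(r)$ depend delicately on how $f$ approaches its minimum in the interior and on the boundary, which is precisely why we centre at the median rather than at an explicit $a_n$; what saves us is that the ratio $\psi(r_n)/\psi(\mu)$ is insensitive to these prefactors and produces the universal Gumbel scale.
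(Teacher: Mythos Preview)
Your outline is close to the paper's approach and the key insight---that the ratio $\psi(r_n)/\psi(\mu)$ depends only on the exponential factor localised near the minimising set of $f$, not on the unknown prefactors---is exactly right. The paper implements this slightly differently: rather than computing $\psi(\mu+t/n)/\psi(\mu)$ directly, it defines $r_n(\beta)$ implicitly by $\psi(r_n(\beta))=e^{-\beta}$ and proves (Lemmas~\ref{l:betas}, \ref{l:bdybetas}) that $n(r_n(\gamma)^d - r_n(\beta)^d)\to\alpha(\gamma-\beta)$ via two-sided bounds on $\nu(B_s(x)\setminus B_r(x))$ together with the localisation you describe. This is equivalent to your ratio statement and feeds into a short lemma (Lemma~\ref{l:Gum}) that converts it into the median-centred Gumbel limit. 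Your tightness sketch for the borderline case is also a bit loose: boundedness of $\psi$ on a fixed-width window is not what is used; rather, the upper bound $\limsup n(r_n(\gamma)^d-r_n(\beta)^d)\le 2(\gamma-\beta)/(\theta_d f_1)$, valid even when $f_1=f_0(2-2/d)$, bounds the width of the window in which both tails of $L_{n,k}$ live.

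There is one genuine gap. Your argument for \eqref{e:LM} only yields $\PP[L_k(\cX_n)\le r< M_k(\cX_n)]\to 0$ for each fixed $r=r_n(\beta)$ (this is Proposition~\ref{p:L<M}), which suffices to transfer the Gumbel limit from $L$ to $M$ but does \emph{not} by itself give $\PP[L_k(\cX_n)=M_k(\cX_n)]\to 1$. The obstruction is the event $\{r_n(\beta_{i-1})<L_k(\cX_n)<M_k(\cX_n)\le r_n(\beta_i)\}$ where both thresholds fall strictly between two consecutive grid points; a countable union bound over all $r$ does not work. The paper closes this with a squeezing argument (Lemma~\ref{l:squeeze}): if $L<M$ both lie in $(r_n(\beta),r_n(\gamma)]$, then the vertex $x$ realising $M$ has $\cX_n(B_{r_n(\beta)}(x))\le k$ but $\cX_n(B_{r_n(\gamma)}(x)\setminus B_{r_n(\beta)}(x))\ge 2$, and the expected number of such $x$ is $O((\gamma-\beta)^2)$ uniformly in $n$. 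Partitioning $[-K,K]$ into intervals of mesh $\delta$ and summing gives a bound $O(\delta)$, which can be made arbitrarily small. Your small-component analysis needs to be supplemented by this annulus-count step.
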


Before proceeding to proofs, we give a rough calculation indicating
why, in the uniform case with $f \equiv f_0{\bf 1}_A$, we might expect to see
qualitative differences between the cases with $d=2$ and 
$k =1 $ or $k=2$, and other cases, as seen in Theorem \ref{t:smooth}.
Suppose we take a sequence of distance parameters
$r_n$ with 
$n f_0 \theta_d r_n^d = \log n + (k-1) \log \log n +c $
for some constant $c$.
Given $r_n$,  let $F^o_{n}$, $F_{n}^\partial$  be
the number  of vertices of degree less than $k$ in the interior of
$A$, respectively near the boundary of $A$. We give
a rough calculation
suggesting that for $d\geq 3$ we have
$\EE [ F_{n}^\partial] \gg \EE [ F_{n}^o]$ so the boundary region dominates,
while for $d=2$, it depends on the value of $k$ whether the interior or
boundary region dominates.
Firstly,
$$
\EE [ F_{n}^o] \approx n ((n f_0 \theta_d r_n^d)^{k-1}/(k-1)!)  
e^{- n f_0 \theta_d r_n^d}
\sim (e^{-c})/(k-1)!.
$$
For $F_{n}^\partial$, note that for small positive
$s$ the volume of the intersection of $A$ with
a ball of radius $r_n$ centred 
at distance $sr_n$ from $\partial A$ is about $(\theta_d/2) r_n^d+
\theta_{d-1} sr_n^d$, suggesting
\begin{align*}
	\EE [ F_{n}^\partial ] &
	\approx nr_n (f_0 \theta_d nr_n^d/2)^{k-1} (|\partial A|/(k-1)!)
	e^{-n f_0 \theta_d r_n^d/2}
	\int_0^\infty e^{-n f_0 \theta_{d-1} s r_n^d} ds
	\\
	& \approx {\rm const.} \times 
	n^{(1/2)-(1/d)} (n r_n^d)^{(1/d)  + k-2} (\log n)^{(1-k)/2}
	\\
	& \approx {\rm const.} \times 
	n^{(1/2)-(1/d)} (\log n)^{(1/d) + ((k-3)/2)}.
\end{align*}
If $d \geq 3$ this tends to infinity (regardless of $k$). 
Thus the boundary effects dominate in this case; we should
choose a slightly bigger $r_n$ to make $\EE[F_{n}^\partial]$
tend to a constant, and then $\EE[F_{n}^o]$ will tend to zero.

If $d=2$, the last expression for $\EE[F_{n}^\partial]$ tends to zero if 
$k=1$ and to infinity if $k \geq 3$, so the interior contribution
dominates when $k=1$ but the boundary contribution dominates when $k \geq 3$.
When $k=2$ the interior and boundary effects are of comparable size.

Having chosen $r_n$ so that $\EE[F_{n}^o+ F_{n}^\partial]$
tends to a constant,  we shall use Poisson approximation to
show that $\PP[L_{n}^o \leq r_n]$ tends to a non-trivial constant,
and then some percolation arguments to show the same limit
holds for $\PP[M_{n,1} \leq r_n]$. \\

The rest of the paper is organised as follows. After the preparation of geometrical ingredients in Section 2, we prove Poisson approximation for the number of $k$-isolated vertices in Section 3, asymptotic equivalence of $L_{n,k}$ and $M_{n,k}$ in Section 4, the weak law in the nonuniform case (Theorem \ref{t:nonunif}) in Section 5 and finally the weak law in the uniform case (Theorem \ref{t:smooth}) in Section 6. 

\section{Geometrical preliminaries}

In this section, we prepare some geometrical ingredients for later use.
Let $A$ be a compact subset of $\RR^d$ with $d \geq 2$.

 Given $B,C \subset \R^d$, set $B \oplus C := \{x+y: x \in B,y \in C\}$.
 Let $o$ denote the origin in $\R^d$.
Given $x \in \R^d$, we write $B+x$ for  $B \oplus \{x\}$.

 Given $s >0$, and $\Gamma \subset A$,
we write $\Gamma^{(s)}$ for $(\Gamma \oplus B_s(o))
\cap A$, the set of points in $A$ distant at most $s$ from $\Gamma$.
%todo:({\bf or just for $\Gamma \oplus B_x(o)$? This is different notation
%from \cite{Pen22} but maybe that's ok.})
Also we set $\diam(\Gamma):= \sup_{x,y \in \Gamma} \|y-x\|$,
or zero if $\Gamma= \emptyset$.

We write $A^{(-s)}$ for  $ A \setminus (\partial A)^{(s)}$,
the set of points  in $A$ distant more than $s$ from the
boundary $\partial A$ of $A$.

When $A$ is polygonal, 
we denote by $\Cor$ the set of corners of $A$. 

\begin{lemma}
	\label{lemgeom3}
	Suppose $A$ has a
	$C^2$ boundary.
	Let $\eps \in (0,1]$. Then:
	
	(i)
	For all small enough $r >0$ we have
	\begin{align}
		|B_r(x) \cap A| \geq ((\theta_d/2)+ (\theta_d \eps/4))r^d, 
	~~~~~
	\forall~  x \in A^{(-\eps r)}.
		\label{e:fornonun}
	\end{align}
	 
(ii)	 There exists $\delta > 0$ and $r_0 > 0$  such that 
	 if $0 < r < s < 2r < r_0$, then
	\begin{align}
	|A \cap B_s(x) \setminus B_r(x)| \leq ( (\theta_d/2)+ \eps)(s^d -r^d),
	 ~~~~~ \forall x \in (\partial A)^{(\delta s)}.
		\label{e:annu_bdy}
	\end{align}

\end{lemma}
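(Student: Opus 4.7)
The plan is to work in local coordinates adapted to $\partial A$ and compare $A$ to its tangent half-space. In either statement, fix $x$ and let $y_0 \in \partial A$ achieve the minimum distance to $x$; after a rigid motion assume $y_0 = 0$ and the outward unit normal to $A$ at $y_0$ is $+e_d$, so that $x = (0, -t)$ with $t = \dist(x, \partial A) \geq 0$. Compactness of $\partial A$ combined with the $C^2$ hypothesis produces uniform constants $\rho_0, C > 0$ such that, within $B_{\rho_0}(0)$, $A = \{(y, z) \in \R^{d-1} \times \R : z \leq g(y)\}$ for a $C^2$ function $g$ with $g(0) = 0$, $\nabla g(0) = 0$, $|g(y)| \leq C\|y\|^2$. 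Write $H := \{z \leq 0\}$ for the tangent half-space and $S := \{(y, z) : |z| \leq C\|y\|^2\}$; then $A \triangle H \subset S$ locally. For $r, s$ sufficiently small the balls $B_r(x), B_s(x)$ lie inside $B_{\rho_0}(0)$ and the comparisons below are valid.

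For (i), using $t \geq \eps r$ together with the pointwise bound $|A \cap B_r(x)| \geq |H \cap B_r(x)| - |S \cap B_r(x)|$ reduces the task to two elementary computations. The slab decomposition gives $|H \cap B_r(x)| = \theta_d r^d/2 + \theta_{d-1} r^d F(\min(t,r)/r)$, where $F(v) := \int_0^v (1 - w^2)^{(d-1)/2} dw$; since $F$ is concave on $[0,1]$ with $F(0) = 0$ and $F(1) = \theta_d/(2\theta_{d-1})$, one has $F(v) \geq v F(1)$, yielding $|H \cap B_r(x)| \geq (\theta_d/2) r^d + (\theta_d \eps/2) r^d$. Meanwhile, direct integration in polar coordinates gives $|S \cap B_r(x)| \leq 2C(d-1)\theta_{d-1} r^{d+1}/(d+1) = O(r^{d+1})$, which is absorbed into $(\theta_d \eps/4) r^d$ once $r$ is small enough (the threshold depending only on $\eps$, $C$, $d$), proving \eqref{e:fornonun}.

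For (ii) the analogous upper bound
\[
|A \cap B_s(x) \setminus B_r(x)| \leq |H \cap B_s(x) \setminus B_r(x)| + |S \cap (B_s(x) \setminus B_r(x))|
\]
has $|H|$-part equal to $\theta_d(s^d - r^d)/2 + \theta_{d-1}\int_0^t [(s^2 - u^2)^{(d-1)/2} - (r^2 - u^2)^{(d-1)/2}] du$. The integrand is estimated via the mean-value bound $a^p - b^p \leq p(a - b) a^{p-1}$ (with $p = (d-1)/2 \geq 1$) when $d \geq 3$, and via $\sqrt{a} - \sqrt{b} = (a - b)/(\sqrt{a} + \sqrt{b})$ using $\sqrt{r^2 - u^2} \geq r/\sqrt{2}$ (valid for $\delta$ small) when $d = 2$; combined with $s^d - r^d \geq (d/2^{d-1})(s - r) s^{d-1}$ (coming from $r \geq s/2$), the integral is bounded by $C_d \delta(s^d - r^d)$ for some dimensional constant $C_d$. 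The main obstacle is controlling $|S \cap (B_s(x) \setminus B_r(x))|$: the naive bound $O(s^{d+1})$ is useless when the annulus is thin, i.e.\ when $r$ is close to $s$. The remedy is to slice at each height $z$ with $|z| \leq Cs^2$; each slice $\{y : (y, z) \in B_s(x) \setminus B_r(x)\}$ is a $(d-1)$-dimensional annulus of radii $\sqrt{r^2 - (z+t)^2}$ and $\sqrt{s^2 - (z+t)^2}$, whose volume obeys the same mean-value bound above and is therefore $O((s^d - r^d)/s)$. Integrating over the $z$-range of length $2Cs^2$ gives $|S \cap (B_s(x) \setminus B_r(x))| \leq C' s (s^d - r^d)$. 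Taking $\delta$ and $r_0$ small enough that $\theta_{d-1}C_d \delta + C' r_0 \leq \eps$ completes the proof of \eqref{e:annu_bdy}.
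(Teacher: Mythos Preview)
Your proof is correct. Part~(i) follows the paper closely: both compare $A$ to its tangent half-space, bound the cap volume linearly from below (you via concavity of $F$, the paper via the equivalent monotonicity of $a \mapsto |B_1(o)\cap(\R^{d-1}\times[0,a])|/a$), and absorb an $O(r^{d+1})$ curvature correction.

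For part~(ii) the paper takes a genuinely different and shorter route. After noting, as you do, that $A$ lies locally above the hyperplane at height $-\delta s$, it observes that relative to the centre $x$ any point of $(B_s(x)\setminus B_r(x))\cap\{z\geq h(x)-2\delta s\}$ makes a bounded angle with $-e_d$: writing $w$ for the displacement from $x$, one has $\pi_d(w)/\|w\|\geq -2\delta s/r\geq -4\delta$ because $r\geq s/2$. Hence the whole set sits inside the conical annulus $sS_\delta\setminus rS_\delta$ with $S_\delta:=\{w\in B_1(o):\pi_d(w/\|w\|)\geq -4\delta\}$, whose volume is exactly $(s^d-r^d)|S_\delta|$ by homothety; choosing $\delta$ small enough that $|S_\delta|\leq\theta_d/2+\eps$ finishes in one line. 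This scaling argument is dimension-uniform and entirely avoids the thin-annulus difficulty you had to overcome via $z$-slicing and the $d=2$ versus $d\geq 3$ case split. Your approach, on the other hand, is pure elementary calculus, keeps the curvature constant $C$ visible throughout, and would adapt more readily if one wanted separate quantitative control of the $\delta$- and $r_0$-dependence.
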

\begin{proof}
	Clearly \eqref{e:fornonun} holds for $x \in A^{(-r)}$.

	Let $x \in (\partial A)^{(r)} \cap A^{(-K_0 r^2)}$ with
	$K_0$ to be chosen later. Without loss of
	generality (after a translation and rotation),
	we can assume that the closest point of $\partial A$ to $x$  
	lies at the origin, and $x = he_{d}$ for some $h > 0 $
	(where $e_d$ is the $d$th coordinate vector),
	and for some convex open $V \subset \R^d$ with $o \in V$
	and some open convex neighbourhood $U$ of the
	origin in $\R^{d-1}$, 
%	
	%Assume
	%$\partial A$ is the graph of a function $\phi:U\to \R$
	%with $U$ an open convex neighbourhood of the origin
	% in $\R^{d-1}$ (locally this
	%is true after a rotation and translation).
%we say that $A$ has {\em smooth boundary}
%if for each $x\in\partial A$, there exists a rigid motion $\rho_x$ of
%$\R^d$, an open set $U_x\subset \RR^{d-1}$ 
	and some $C^2$ function
	$\phi :\RR^{d-1}\to\RR$ we have that $A \cap V
	= V \cap \mathrm{epi}(\phi) $.
%where $\mathrm{epi}(f_x):= \{(u,z) \in \R^{d-1} \times \R: z \ge f_x(u)\}$,
%the closed epigraph of $f_x$.

	Since $z=o$ is the closest point in $\partial A$ to $x$,
	we must have $\nabla \phi(o) =o$. By a compactness argument,
	we can also assume
	$\sum_{i=1}^d \sum_{j=1}^d |\partial^2_{ij} \phi| \leq K/(9d^2)$
	on $U$ for some constant $K$ (depending on $A$).

	Now suppose $u \in \R^{d-1}$ with $\|u \| \leq 3r$
	(assume $r$ is small enough that all such $u$ lie
	in $U$). By the
	Mean Value theorem
	$\phi(u) =  u \cdot \nabla \phi(w)$ for some $w \in [o,u]$,
	and for $1 \leq i \leq d$, $\partial_i \phi(w) =
	w \cdot \nabla \partial_i \phi(v)$ for some
	$v \in [o,w]$. Hence 
	\bea
	|\phi(u)| \leq  (K/9)\|u\|^2 \leq
	K r^2, ~~~ \forall u \in \R^{d-1} ~{\rm with}~ \|u\| \leq 3r.
	\label{0127b}
	\eea

	For $a >0$ set $g(a):= |B_1(o) \cap (\R^{d-1} \times [0,a])|$.
	Then $g(a)/a$ is decreasing in $a$, so for $0 \leq a \leq 1$
	we have $g(a)/a \geq g(1) = \theta_d/2$.

	Let $\pi: \R^d \to \R^{d-1} $ denote projection onto
	the first $d-1$ coordinates and let $h:\R^d \to \R$ denote 
	projection onto the last coordinate ($h$ stands for
	`height').
	Take $K_0= 2K$.
	%Writing $h(x)$ for the $d$-th coordinate
	%(`height') of $x$, 
	Then $h(x) = d(x,\partial A) \geq K_0 r^2$.  Also $h(x) \leq r$.
	For $z \in B_r(x) \cap (\R^{d-1} \times
	[Kr^2,\infty))$ we have $\|\pi (z)\| \leq r$ so that
	by (\ref{0127b}) we have
	$|\phi(\pi(z)) | \leq K r^2 \leq h(z)$, and hence $z \in A$.
	Therefore
	\begin{align}
		|B_r(x) \cap A| & \geq |B_r(x) \cap (\R^{d-1}\times [Kr^2,\infty))|
	%\nonumber \\ 
	%	& \geq (\theta_d/2) r^d 
	\nonumber \\ 
		%& \geq (\theta_d/2) r^d +\eps_0 ( h(x) - Kr^2) r^{d-1}
		& \geq (\theta_d/2) r^d + r^d g((h(x) -K r^2)/r)
	\nonumber \\ 
		& \geq (\theta_d/2) r^d + (\theta_d/2) r^{d-1}
		(h(x) -K r^2)
	\nonumber \\ 
		& \geq (\theta_d/2) r^d + (\theta_d  h(x) /4) r^{d-1},
	\label{0127a}
	\end{align}
	%where the last line is because $Kr^2 = K_0r^2/2 \leq h(x)$.
	where the last line is because $Kr^2 = K_0r^2/2 \leq h(x)/2$.
	%and $\eps_0$ is a constant depending only on $d$.

	Let  $\eps >0$. Provided $r$ is small enough we 
	have $\eps r \geq K_0 r^2$,  so that
	 $ A^{(-\eps r)} \subset A^{(-K_0r^2)}$. Hence 
	for $x \in (\partial A)^{(r)} \cap A^{(-\eps r)}$ 
	we have \eqref{0127a}, and therefore since $h(x) \geq \eps r$,
	$$
	|B_r(x) \cap A|
	%\geq (\theta_d/2) r^d +\theta_d ( h(x) /4) r^{d-1}
		 \geq ((\theta_d/2) + (\theta_d \eps /4))r^d.
	$$
	%so (\ref{e:fornonun}) holds
	%on taking $\delta = \theta_d \eps/4$. This completes the proof of (i).
Thus we have (i).
	 \begin{figure}
\center
\includegraphics[width=0.49\textwidth, trim= 0 10 5 20]{Annulus2.pdf}
\vskip -2.5cm
\caption{\label{partAnnu}
The horizontal lines are at height $0$, $-2 \delta s$ and $-4\delta s$.
The circles are of radius $r,s$ centred on the origin.
		 }
\end{figure}

For part (ii),
let $\delta >0$, to be chosen later.
Suppose that $s >0$ and
$x \in (\partial A)^{(\delta s)}$.
Let $r \in (s/2,s)$. Provided $s$ is small enough,
by  \eqref{0127b}  we have $\phi(u) \geq - \delta s$ for all $u \in \R^{d-1}$
with $\|u \| \leq 3s$. Therefore
 $$
 A \cap B_s(x) \setminus B_r(x) \subset (B_s(x) \setminus B_r(x)) \cap  
 (\R^{d-1} \times [ h(x) - 2 \delta s, \infty)  ) .
 $$
 Define the set 
 $S_{\delta}:= \{x \in B_1(o): \pi_d( \|x\|^{-1} x ) \geq-4 \delta\}.$
 Then since $s \leq 2r$, as shown in Figure \ref{partAnnu},
\begin{align*}
	|A \cap B_s(x) \setminus B_r(x)| \leq (s^d- r^d) |S_\delta| .
\end{align*}
%
% Therefore
%\begin{align}
% |A \cap B_s(x) \setminus B_r(x) | 
% \leq (\theta_d /2)(s^d - r^d) + 2 \delta  s \theta_{d-1} (s^{d-1} -r^{d-1}).
%	\label{e:slice_annu}
%\end{align}
% Since we are taking $r \geq s/2$, we have that 
% $$
% s^{d} -r^{d}  = (s-r) (s^{d-1} + s^{d-2} r + \cdots + r^{d-1} )
% \geq (s-r) d 2^{1-d}
%  s^{d-1}
% $$
% and similarly
% $$
% s^{d-1} -r^{d-1}  = (s-r) (s^{d-2} + s^{d-3} r + \cdots + r^{d-2} )
% \leq (s-r) d s^{d-2}.
% $$
% Therefore $s(s^{d-1}-r^{d-1}) \leq 2^{d-1} (s^{d} -r^d)$. Hence
% by \eqref{e:slice_annu},
% $$
% |A \cap B_s(x) \setminus B_r(x)| \geq (\theta_d /2)(s^d - r^d)
% + \delta  \theta_{d-1} 2^d (s^d -r^d),
% $$
On taking $\delta $ small enough so that $|S_\delta| \leq
\theta_d ((1/2) + \eps)$, 
% and on taking $\delta  = 2^{-d}\theta_{d-1}^{-1} \eps$,
 we obtain 
 (\ref{e:annu_bdy}), completing the proof of part (ii).
\end{proof}

Given $x \in A$,
set $\dist(x,\partial A):= \inf_{z \in \partial A}\{\|z-x\|\}$. 

\begin{lemma}
	\label{lemgeom1a} Suppose $A \subset \R^d$ is
	compact with $C^2$ boundary.
	%and with $\overline{A^o}=A$.

	(i) Given $\eps > 0$, there  exists 
	$r_0 = r_0(d,A, \eps)>0$  such that
	\bea
	|  B_r(x) \cap A| \geq ((\theta_d/2)-\eps)  r^d,
	~~~~ \forall x \in A, r \in (0,r_0).
	\label{e:0430a}
	\eea
	(ii) There is a constant $r_1 = r_1 (d,A)$, such that if
	$r \in (0,r_1)$ and $x, y \in A$
	with  $\|y-x \| \leq 2r$ and
	 $\dist(x,\partial A) \leq \dist(y,\partial A)$, then
	\bea
	|   A \cap B_r(y) \setminus B_r(x)| \geq  8^{-d}\theta_{d-1}
	r^{d-1} \|y-x\|.
	\label{e:diffball}
	\eea
	(iii)
	 Given $\eps > 0$, there  exists 
	$r_2 = r_2(d,A, \eps)>0$  such that
	\bean
	| A \cap  B_s(x) \setminus B_r(x) | \geq ((\theta_d/2)-\eps)  
	(s^d-r^d),
	~~~~ \forall x \in A, s \in (0,r_2), r \in (0,s).
	%\label{e:0430a}
	\eean
\end{lemma}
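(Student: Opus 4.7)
All three parts rest on the local $C^2$ parametrization of $\partial A$ already employed in the proof of Lemma~\ref{lemgeom3}: for each $z \in \partial A$, after a rigid motion, $A$ coincides locally with $\mathrm{epi}(\phi)$ for a $C^2$ function $\phi$ with $\phi(o) = 0$, $\nabla\phi(o) = o$, and, uniformly in $z$ by compactness of $\partial A$, $|\phi(u)| \leq K\|u\|^2$ on $\{\|u\| \leq 3r\}$ for some $K = K(A)$ once $r$ is small enough.

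For part (i), if $\dist(x, \partial A) \geq r$ then $B_r(x) \subset A$ and the bound is trivial. Otherwise, letting $z$ be the closest boundary point to $x$ and applying the chart at $z$, one has $x = h_x e_d$ with $h_x := \dist(x, \partial A) < r$, and $B_r(x) \cap \{t \geq Kr^2\} \subset B_r(x) \cap A$. With $g(a) := |B_1(o) \cap \{t \geq a\}|$, continuous at $0$ with $g(0) = \theta_d/2$, this yields $|B_r(x) \cap A| \geq r^d g(Kr) \geq ((\theta_d/2) - \eps) r^d$ for $r$ sufficiently small. Part (iii) is proved identically, with $B_r(x)$ replaced by the annulus $B_s(x) \setminus B_r(x)$ and $g$ replaced by the continuous function $a \mapsto |(B_1(o) \setminus B_{r/s}(o)) \cap \{t \geq a\}|$, whose value at $0$ equals $(\theta_d/2)(1 - (r/s)^d)$.

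Part (ii) will be the main obstacle. A direct fibre integration in the hyperplane perpendicular to $v := (y-x)/\|y-x\|$ gives the purely geometric bound $|B_r(y) \setminus B_r(x)| \geq c_d \, \theta_{d-1} \, r^{d-1} \|y-x\|$ for an explicit $c_d > 0$, so that the $8^{-d}$ in the claim leaves substantial slack. When $\dist(x, \partial A) \geq 2r$, both balls lie inside $A$ and this pure-geometry bound already finishes the job. For the boundary case, set up the local chart at the closest boundary point of $x$, so $x = h_x e_d$ with $h_x < 2r$; the elementary estimate $\dist(y, \partial A) \leq y_d - \phi(y')$, combined with $\dist(y, \partial A) \geq h_x$ and $\phi(y') \geq -Kr^2$, yields the key inequality $y_d \geq h_x - Kr^2$, which controls how much $v$ can point ``downward'' in the chart.

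I then identify an explicit sub-region of the crescent whose transverse variable is restricted to have non-negative component in the $(v, e_d)$-plane; together with the range $s \geq \|y-x\|/2$ inherited from the Case A fibre decomposition (those fibres with $\|u\| \leq \sqrt{r^2 - \|y-x\|^2/4}$), this restriction keeps every point of the sub-region in $\{t \geq Kr^2\} \subset A$, after possibly a further bounded shrinking of the transverse disc to absorb the residual $O(r^2)$ error. The remaining volume is easily at least $8^{-d}\theta_{d-1} r^{d-1} \|y-x\|$. The delicate regime is when $\|y-x\|$ is much smaller than $r$, in which $v$ may be nearly tangential; the inequality $v_d \geq -Kr^2/\|y-x\|$ is still enough because on the chosen sub-region $s \geq \|y-x\|/2$, so $s v_d \geq -Kr^2/2$, a negligible loss compared with the target.
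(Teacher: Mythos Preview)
Your argument for (i) is fine and essentially standard. The paper itself does not prove (i) or (ii) here but cites \cite[Lemma 3.2]{HPY23}, so there is nothing to compare directly for those parts.

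For (iii), your ``proved identically'' hides a real uniformity issue. The function $a\mapsto |(B_1(o)\setminus B_{r/s}(o))\cap\{t\geq a\}|$ depends on $\rho:=r/s$, and what you actually need is the \emph{relative} bound
\[
|(B_1\setminus B_\rho)\cap\{0\leq t\leq a\}|\;\leq\;\eps\,(1-\rho^d)
\]
for all $\rho\in(0,1)$ once $a$ is small. Mere continuity at $a=0$ for each fixed $\rho$ does not give this, since the target $(\theta_d/2)(1-\rho^d)$ can be arbitrarily small. The uniform relative bound does hold (e.g.\ by a polar-coordinate computation), but it is not ``identical'' to (i). The paper avoids this entirely by a cone construction: set $\Lambda_\delta=\{(u,z):z\geq\delta\|u\|,\ \|u\|^2+z^2\leq1\}$ with $|\Lambda_\delta|\geq(\theta_d/2)-\eps$; then $S=(s\Lambda_\delta\setminus r\Lambda_\delta)+x$ has volume exactly $(s^d-r^d)|\Lambda_\delta|$ and is checked to lie in $A\cap(B_s(x)\setminus B_r(x))$. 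The scaling invariance of the cone makes the $(s^d-r^d)$ factor automatic.

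For (ii), your final step contains an arithmetic error. You write that from $s\geq\|y-x\|/2$ and $v_d\geq -Kr^2/\|y-x\|$ one gets $sv_d\geq -Kr^2/2$. But when $v_d<0$, a \emph{lower} bound on $s$ yields an \emph{upper} bound on $sv_d$, not a lower bound. Using the correct upper bound $s\leq r+\|y-x\|\leq 3r$ gives only $sv_d\geq -3Kr^3/\|y-x\|$, which can be far more negative than the $O(r^2)$ you need when $\|y-x\|\ll r$. So as written the boundary case of (ii) is not established; the argument would need either a sharper control on $v_d$ (for instance showing $|v_d|=O(r)$ rather than $O(r^2/\|y-x\|)$, which requires a more careful comparison of $\dist(y,\partial A)$ and $y_d$) or a different decomposition of the crescent.
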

\begin{proof} 
	For (i) and (ii),
	see \cite[Lemma 3.2]{HPY23}. For (iii),
	as in 
	the proof of Lemma \ref{lemgeom3},
	it suffices to consider $x \in A$ such that
	the closest point of $\partial A$ to $x$ is at $o$
	with $x = he_d$ for some $h \in (0,s]$, and moreover
	$\partial A $ near $o$  is the graph of a $C^2$ function
	$\phi: U \to \R$ with $U$ an open neighbourhood of the
	origin in $\R^{d-1}$ and with
	$\sum_{i=1}^d \sum_{j=1}^d |\partial^2_{ij} \phi |\leq K/(9d^2)$
	on $U$ for some constant $K$ depending only on $A$.

	Then, as at \eqref{0127b},
	%in the proof of \cite[Lemma 3.2]{HPY23}, 
	provided
	$r_1$ is small enough we have
	$|\phi(u)| \leq (K/9) \|u\|^2$ for $\|u\| < r_1$.
	Therefore given $\delta >0$, if also $\|u \| <  \delta/K$
	we have $|\phi(u) | \leq \delta \|u\|$.
	%and therefore
	%with $h(\cdot)$ denoting projection onto the $d$th coordinate,
	%$$
	%\{(u,h): \|u \leq 9 \delta/K, \|u \| \leq h \leq \sqrt{r^2-u^2}
	%\}
	%\subset A
	%$$

	Choose $\delta $ as follows. Given $a>0$,  define the set
	$\Lambda_a \subset \R^{d}$ by
	$$
	\Lambda_a:= \{(u,z): u \in \R^{d-1}, z \in \R, z \geq a\|u\|, \|u\|^2
	+ z^2 \leq 1\}.
	$$
	Then $|\Lambda_a | \uparrow \theta_d/2$ as $a \downarrow 0$
	so we can and do choose $\delta >0$ 
	such that $|\Lambda_\delta| \geq (\theta_d/2) - \eps$.

	For $0 \leq r < s$, given $x$ as above define the set 
	$
	S:=  ( (s \Lambda_\delta) \setminus (r \Lambda_\delta)) + x.  
	$
	Then
	$$
	|S| = | (s \Lambda_\delta) \setminus (r \Lambda_\delta)|
	= (s^d -r^d)|\Lambda_\delta| \geq ((\theta_d/2)-\eps)(s^d -r^d).
	$$
	Now suppose also that $s <  \delta /K$.
	For $y = (v,h') \in S$,
	we have $\|v \| \leq s$ and 
	$$
	h' \geq (h'-h) \geq \delta \|v\| \geq \phi(v),
	$$
	so $y \in A$ and $S \subset A$. 
	Also $S \subset B_s(x) \setminus B_r(x)$.
	This gives the result, with $r_2= \delta/K$.
\end{proof}

Recall that $\Cor$ denotes the set of corners of $A$ when $A$ is polygonal.

\begin{lemma}\label{l:poly_diff_ball}
	Assume $d=2$ and $A$ is polygonal, then there exist $K>0$ and $r_1>0$ depending on $A$ such that for all $r\in(0,r_1)$, $x,y\in A\setminus \Cor^{(Kr)}$ with $\dist(x,\partial A)\le \dist(y,\partial A)$ and $\|x-y\|\le 3r$, the lower bound \eqref{e:diffball} holds.
\end{lemma}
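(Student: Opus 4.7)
The plan is to reduce the claim to the half-plane analogue of \eqref{e:diffball}, exploiting the fact that for a point $z \in A \setminus \Cor^{(Kr)}$ with $K$ chosen large, a ball of radius $O(r)$ around $z$ meets $\partial A$ in at most a subset of a single straight line. Once this local flattening is achieved, the estimate becomes exactly the half-plane special case embedded inside the proof of Lemma~\ref{lemgeom1a}(ii).

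First I would choose $K$ and $r_1$ depending on $A$ so that for every $r \in (0,r_1)$ and every $z \in A \setminus \Cor^{(Kr)}$, the set $\partial A \cap B_{10r}(z)$ is contained in a single line $L_z$ supporting (part of) one edge of $A$. Such a choice exists because, if instead two distinct edges of $\partial A$ both met $B_{10r}(z)$, then either they would be adjacent, in which case their common corner would lie within $O(r/\sin(\alpha_{\min}/2))$ of $z$ (where $\alpha_{\min}$ denotes the minimum interior angle of $A$), contradicting $z \notin \Cor^{(Kr)}$ for $K \sim 1/\sin(\alpha_{\min}/2)$, or they would be non-adjacent, in which case one of them would have length less than $20r$, contradicting $r_1 K$ being smaller than the minimum edge length.

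Now let $x,y$ be as in the lemma. Since $\|y-x\| \leq 3r$, both $B_r(x)$ and $B_r(y)$ lie inside $B_{4r}(x) \subset B_{10r}(x)$, and similarly both lie inside $B_{10r}(y)$. If $\partial A \cap B_{10r}(x) = \emptyset$, then $B_r(x), B_r(y) \subset A$, and \eqref{e:diffball} reduces to the standard Euclidean estimate $|B_r(y) \setminus B_r(x)| \geq 8^{-d}\theta_{d-1}r^{d-1}\|y-x\|$ for $\|y-x\| \leq 2r$, while for $2r < \|y-x\| \leq 3r$ the two balls are disjoint and $|B_r(y)| = \theta_d r^d$ already exceeds $8^{-d}\theta_{d-1}r^{d-1}(3r)$. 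Otherwise there is a unique line $L$ with $\partial A \cap B_{10r}(x) \subset L$; letting $H$ be the closed half-plane bounded by $L$ that coincides with $A$ inside $B_{10r}(x)$, we have $A \cap B_r(x) = H \cap B_r(x)$ and $A \cap B_r(y) = H \cap B_r(y)$. Moreover the hypothesis $\dist(x,\partial A) \leq \dist(y,\partial A)$ is equivalent to $\dist(x,\partial H) \leq \dist(y,\partial H)$, because the one-edge-only property at $y$ forces the closest boundary point of $y$ to lie on the edge supported by $L$.

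It remains to establish the half-plane estimate: for $x, y \in H$ with $\|y-x\| \leq 2r$ and $\dist(x,\partial H) \leq \dist(y,\partial H)$, one has $|H \cap B_r(y) \setminus B_r(x)| \geq 8^{-d}\theta_{d-1}r^{d-1}\|y-x\|$. This is exactly the flat-boundary case that underlies the proof of Lemma~\ref{lemgeom1a}(ii) (and of \cite[Lemma 3.2]{HPY23}); one can either invoke it directly, or prove it by exhibiting an explicit slab of thickness $\|y-x\|$ and cross-sectional area of order $r^{d-1}$, placed on the far side of $y$ from $x$ inside $B_r(y) \setminus B_r(x)$, where the ordering of the heights is precisely what prevents $\partial H$ from truncating the slab. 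The main obstacle is the quantitative bookkeeping in the one-edge-only reduction, where $K$ and $r_1$ need to be chosen explicitly in terms of the minimum interior angle and minimum edge length of $A$; everything after that reduction is a short geometric computation borrowed from the $C^2$ case.
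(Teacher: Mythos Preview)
Your approach is essentially the same as the paper's: both arguments choose $K$ and $r_1$ so that any ball of radius $O(r)$ centred away from the corners meets at most one edge of $A$, then invoke the half-plane version of the estimate underlying Lemma~\ref{lemgeom1a}(ii) (i.e.\ \cite[Lemma~3.2]{HPY23}). The paper works with $B(x,4r)$ rather than your $B_{10r}(z)$ and gives the explicit value $K = 5 + 8/\min_i |\sin\alpha_i|$, but the structure is identical.

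One small correction: your handling of the case where two \emph{non-adjacent} edges meet $B_{10r}(z)$ is not quite right. It does not follow that one of those edges has length less than $20r$; a thin convex polygon can have two long non-adjacent edges that pass close to one another. The paper deals with this instead by choosing $r_1$ small enough that any two non-adjacent edges of $A$ are at distance at least $8r_1$ from each other, which directly rules out this case. With that fix your argument goes through.
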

\begin{proof} 
	
%	First we examine the situation where $x$ is not too close to the corners of $A$. 
	Let $r_1$ be small enough such that
	non-overlapping edges of $A$ are distant at least $8r_1$ from each other. 
	Consider $x \in A\setminus \Cor^{(Kr)}$ with $r< r_1$ where
	$K$ is made explicit later.
	We can assume that the corner of $A$ closest to $x$
	is formed by edges $e,e'$ meeting at the origin with angle 
	$\alpha\in (0,2\pi) \setminus \{\pi\}$. 
	We claim that, provided $K>4+ 8/|\sin \alpha|$,
	the disk $B(x,4r)$ intersects at most one of the two edges.
	Indeed, if it intersects both edges,
	then taking $w\in B(x,4r) \cap e, w'\in B(x,4r) \cap e'$
	we have $\|w-w'\|\le 8r$; hence $\dist(w,e')\le 8r $.
	Then, $\|w\| \leq \dist(w,e')/|\sin \alpha|\le 8r /|\sin \alpha|$. 
	However, $\|w\|\ge (K-4)r$ by the triangle inequality,
	so we arrive at a contradiction. 
	We have
	thus shown that any ball of radius $4r$ with centre
	distant at least $Kr$ from the corners of $A$ cannot intersect two edges at the same time, where 
	$K = 5 +(8/\min_i |\sin \alpha_i|)$ and $\{\alpha_i\}$ are
	the angles of  the  corners of $A$.  
	
	We have $B(x,r)\cup B(y,r)\subset B(x,4r)$; hence, the argument leading to Lemma \ref{lemgeom1a}-(ii), namely \cite[Lemma 3.2]{HPY23}, gives the estimate \eqref{e:diffball} in this case too. 
\end{proof}

\begin{lemma}
	\label{l:Med1}
	Let $\eps \in (0,1]$. Then
	for all $r >0 $ and
	all compact $B \subset \R^d$ with $ \diam B \geq \eps r$
	we have $| B \oplus B_r(o) | \geq |B| + \theta_d(1 + 2^{-d-1}d^{-d}
	\eps^d)
	r^d$. 
\end{lemma}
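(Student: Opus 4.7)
The plan is to combine a direct geometric construction (effective when $|B|$ is small) with the classical Brunn--Minkowski inequality (effective when $|B|$ is large). By compactness, choose $x_0,x_1\in B$ realizing $\diam B=:\delta\ge\eps r$, set $u:=(x_1-x_0)/\delta$ and $\rho:=\eps r/(2d)$, and define the auxiliary ball $S:=B_\rho(x_1+(r-\rho)u)$. A one-line triangle-inequality check gives $S\subset B_r(x_1)\subset B\oplus B_r(o)$ and $\|y-x_0\|\ge\delta+r-2\rho$ for every $y\in S$. Since $d\ge 2$ forces $\rho<\tfrac12\min(\delta,r)$, this lower bound strictly exceeds both $r$ and $\delta$; because $B\subset B_\delta(x_0)$ (as $x_0\in B$), it follows that $S$ is disjoint from both $B_r(x_0)$ and $B$.

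Writing $E:=\theta_d 2^{-d-1}d^{-d}\eps^d r^d$ for the desired excess, the choice of $\rho$ is calibrated so that $|S|=\theta_d\rho^d=2E$. By the disjointness just noted,
\[
|B\oplus B_r(o)| \ge |B\cup B_r(x_0)\cup S| = |B|+\theta_d r^d+|S|-|B\cap B_r(x_0)|.
\]
If $|B|\le E$ then $|B\cap B_r(x_0)|\le|B|\le E$, and the right-hand side is at least $|B|+\theta_d r^d+E$, as required. In the complementary regime $|B|>E$, I would instead invoke the Brunn--Minkowski inequality $|B\oplus B_r(o)|^{1/d}\ge|B|^{1/d}+\theta_d^{1/d}r$ and keep the cross term $d|B|^{1/d}\theta_d^{(d-1)/d}r^{d-1}$ from the binomial expansion; using $|B|^{1/d}>E^{1/d}=\theta_d^{1/d}2^{-(d+1)/d}d^{-1}\eps r$, this term simplifies to $\theta_d 2^{-(d+1)/d}\eps r^d$, which comfortably dominates $E$ when $\eps\le 1$ and $d\ge 2$.

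The principal obstacle is the small-$|B|$ regime: when $\delta\le r$ one has $B\subset B_r(x_0)$, so the naive union $B\cup B_r(x_0)$ has volume only $\theta_d r^d$, and one genuinely needs the extra region $S$ both to account for the ``lost'' $|B|$ and to supply the $E$-sized excess. The $d$-dependent choice $\rho=\eps r/(2d)$ is precisely what makes $S$ fit strictly inside $B_r(x_1)$ on the far side from $x_0$ with enough clearance to miss $B\cup B_r(x_0)$, while also keeping the threshold $|B|=E$ small enough for Brunn--Minkowski to handle the opposite regime.
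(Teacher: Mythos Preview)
Your proof is correct, and it takes a genuinely different route from the paper's. The paper argues purely geometrically with a single construction: after rescaling to $r=1$, it picks a coordinate direction in which $B$ has width at least $\eps/d$, places two disjoint \emph{half}-balls $H^-,H^+$ of radius $1$ at the two extremal points of $B$ in that direction (these lie in $B\oplus B_1(o)$, are disjoint from $B$ and from each other, and together contribute $\theta_d$), and then places a small ball $D$ of radius $\eps/(2d)$ just above the topmost point of $B$; one half of $D$ must miss $B\cup H^-\cup H^+$, supplying the extra $\theta_d 2^{-d-1}d^{-d}\eps^d$. This handles every $B$ at once, at the price of juggling three extremal points and several half-balls. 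Your approach instead splits on $|B|$: for small $|B|$ a single auxiliary ball $S$ of radius $\eps r/(2d)$, pushed beyond $x_1$ along the diameter direction, does all the work (cleaner, only two distinguished points), while for large $|B|$ you invoke Brunn--Minkowski and keep the first cross term in the binomial expansion. The latter is a heavier ingredient than anything the paper uses, but it makes the large-$|B|$ regime a one-liner. Both routes land on exactly the stated constant.
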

\begin{proof}
	By scaling, it suffices to show that for all compact $B \subset \R^d$
	with $\diam B  \geq \eps $, we have 
	$| B \oplus B_1(o) | \geq |B| + \theta_d (1+ 2^{-d-1}d^{-d}
	\eps^d)  $. 

	 Let $B \subset \R^d$
	 with $\eps \leq \diam B < \infty$.
	 Without loss of generality we can assume $\diam(\pi_1(B)) 
	 \geq \eps/d$, where $\pi_1$ denotes projection onto
	 the first coordinate.

	 Let $x$ be a  left-most point of $B$,
	 $y$ a right-most point of $B$ and $u$ a top-most point of $B$.
	 Here `left' and `right' refer to ordering using the first coordinate
	 and `top' refers to ordering using the last coordinate.
	 Let $H^+$ be the right half of $B_1(y)$ and $H^-$ the left-half
	 of $B_1(x)$. Let $D:= B_{\eps/(2d)}(u+ (0,\ldots,0,\eps/(2d)))$, and
	 let $D^+$ and $ D^-$ be the left half and right half of $D$,
	 respectively.
	 Then the interiors of $H^+$ and of $H^-$ are disjoint
	 from $B$ and from each other, and the interior of either
	 $D^+$  or $D^-$ (or both) is disjoint from all of 
	 $B, H^+$ and $H^-$. 
	 Therefore since $H^+$, $H^-$ and $D$ are
	 all contained in $B \oplus B_1(o)$, we obtain
	 that
	 $$
	 |B \oplus B_1(o) | \geq |B|+ \theta_d + (\theta_d 
	 2^{-d-1}d^{-d})\eps^d,
	 $$
	 as required.
\end{proof}

\begin{lemma}
	\label{l:Med2} 
	Suppose $\partial A \in C^2$.
	Let $\rho, \eps \in \R$ with $0 < \eps < \rho$.
	Then there exist
	$\delta = \delta(d,\rho,\eps) >0$,
	%and $\delta' = \delta'(d,\rho,\eps) >0$,
	and $r_0 = r_0 (d, \rho, \eps, A)$,
	such that for all $r\in (0,r_0)$ and
	all compact $B \subset A$ with $\eps r \leq \diam B \leq \rho r$
	we have 
	\begin{align}
		\label{e:MedLB}
		| (B \oplus B_r(o)) \cap A | \geq |B| +
%		todo: could use the notation B^{(r)} but maybe ok
		((\theta_d/2) + \delta) r^d,
	\end{align}
		and also, letting $x_0$ denote a closest point of $B$ 
	to $\partial A$, we have
	 \begin{align}
		\label{e:MedLB2}
		| (B \oplus B_r(o)) \cap A | \geq |B| +  |B_r(x_0)\cap A| + 
		2  \delta r^d. 
	\end{align}
	%}
\end{lemma}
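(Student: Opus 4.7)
The plan is to prove the stronger inequality \eqref{e:MedLB2}; the weaker \eqref{e:MedLB} then follows since $|B_r(x_0) \cap A| \geq (\theta_d/2 - o(1))r^d$ by Lemma \ref{lemgeom1a}(i), on slightly shrinking $\delta$ and $r_0$. My strategy is to first reduce to a tangent half-space via the $C^2$ regularity of $\partial A$, then apply Lemma \ref{l:Med1} combined with a reflection trick.

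Let $\hat x_0 \in \partial A$ be closest to $x_0$; after a rigid motion, assume $\hat x_0 = o$ with inward normal $e_d$, so that $\partial A$ is locally the graph of a $C^2$ function $\phi$ with $\phi(o) = 0$, $\nabla \phi(o) = 0$, and $|\phi(u)| \leq K \|u\|^2$ on a fixed neighborhood of $o$, for some $K$ depending only on $A$. Set $H := \{z_d \geq 0\}$. In the near-boundary case $\dist(x_0, \partial A) < (\rho+1)r$, we have $B \oplus B_r(o) \subset B_{(\rho+2)r}(\hat x_0)$, within which $A$ differs from $H$ only by a set of volume $O(r^{d+1}) = o(r^d)$. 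Therefore $|(B \oplus B_r(o)) \cap A| - |(B \oplus B_r(o)) \cap H|$ and $|B_r(x_0) \cap A| - |B_r(x_0) \cap H|$ are both $o(r^d)$, absorbable by taking $r_0$ small. If instead $\dist(x_0, \partial A) \geq (\rho+1)r$, then $B \oplus B_r(o) \subset A$ and $|B_r(x_0) \cap A| = \theta_d r^d$, and Lemma \ref{l:Med1} yields $|B \oplus B_r(o)| \geq |B| + \theta_d r^d(1 + 2^{-d-1} d^{-d} \eps^d)$, establishing \eqref{e:MedLB2} with $\delta_1 := 2^{-d-2} \theta_d d^{-d} \eps^d$.

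For the remaining case, work with $A = H$ and $\dist(x_0, \partial H) < (\rho+1)r$. Let $R$ denote reflection across $\partial H$ and set $B^* := B \cup R(B)$; then $|B^*| = 2|B|$, $\diam B^* \geq \eps r$, and $B^* \oplus B_r(o) = (B \oplus B_r(o)) \cup R(B \oplus B_r(o))$ is $R$-symmetric. Lemma \ref{l:Med1} applied to $B^*$ gives $|B^* \oplus B_r(o)| \geq 2|B| + \theta_d r^d(1 + c_d \eps^d)$ with $c_d := 2^{-d-1} d^{-d}$. Combined with the inclusion-exclusion identity $|B^* \oplus B_r(o)| = 2|B \oplus B_r(o)| - J$, where $J := |(B \oplus B_r(o)) \cap R(B \oplus B_r(o))|$, this yields $|B \oplus B_r(o)| \geq |B| + (\theta_d/2) r^d(1 + c_d \eps^d) + J/2$. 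A direct computation shows $J \geq 2|B_r(x_0) \cap H^-|$, since $B_r(x_0) \cap R(B_r(x_0))$ decomposes as the disjoint union of a cap in $H^-$ and its reflection in $H^+$. Subtracting $V_{H^-} := |(B \oplus B_r(o)) \cap H^-|$ from both sides and using $|B_r(x_0) \cap H| = \theta_d r^d - |B_r(x_0) \cap H^-|$, the target \eqref{e:MedLB2} reduces to an inequality relating $V_{H^-} - |B_r(x_0) \cap H^-|$ to $J/2 - |B_r(x_0) \cap H^-|$.

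The main obstacle is this comparison: the extra leakage $V_{H^-} - |B_r(x_0) \cap H^-|$ coming from mass of $B$ away from $x_0$ must be dominated by the extra overlap in $J$ arising from pairs $b, b' \in B \setminus \{x_0\}$. If direct geometric bookkeeping using Lemma \ref{lemgeom1a}(ii) falls short, a compactness argument provides a uniform positive lower bound: after rescaling $r = 1$, the deficit $|(B \oplus B_1(o)) \cap H| - |B| - |B_1(x_0) \cap H|$ is continuous and strictly positive on the compact (in Hausdorff metric) parameter space of admissible configurations $(B, x_0)$, hence uniformly bounded below by some $\delta_2 > 0$. Setting $\delta := \min(\delta_1, \delta_2)$ and choosing $r_0$ small enough to absorb the half-space approximation error then completes the proof.
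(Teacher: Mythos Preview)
Your reduction to a half-space and the reflection idea are natural, but the proof has a real gap at precisely the point you label ``the main obstacle'': you set up the comparison between the leakage $V_{H^-} - |B_r(x_0)\cap H^-|$ and the extra overlap in $J$, acknowledge that it is what is needed, and then do not prove it. The compactness fallback does not close this gap. First, the deficit $\Psi(B) := |(B \oplus B_1(o)) \cap H| - |B| - |B_1(x_0)\cap H|$ is \emph{not} continuous in the Hausdorff metric: the map $B\mapsto|B|$ is only upper semicontinuous (finite $\tfrac1n$-nets of a solid cube converge to the cube while their Lebesgue measures stay at zero), and $B\mapsto |(B\oplus B_1(o))\cap H|$ is likewise only upper semicontinuous, so $\Psi$ is a difference of two upper semicontinuous terms and the ``continuous on a compact space'' argument does not apply as written.

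Second, and more fundamentally, you never prove that $\Psi>0$ for every admissible $B$. A compactness argument can at best upgrade pointwise positivity to a uniform lower bound; it cannot manufacture the positivity from nothing. But showing $\Psi(B)>0$ for each fixed $B$ is already the qualitative content of \eqref{e:MedLB2}: one must exhibit, disjointly from $B$ and from $B_r(x_0)\cap H$, a region of definite volume inside $(B\oplus B_r(o))\cap H$. This is exactly the work the paper carries out: it splits according to which coordinate projection $\pi_i(B)$ has diameter at least $\eps r/d$, and in each case builds explicit disjoint pieces (a near-half-ball $H^-$ below $x_0$, a half-ball $H^+$ or two near-quarter-balls $Q^\pm$ at extremal points of $B$, and a small auxiliary ball $S$) whose volumes sum to the required excess. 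Your proposal skips this constructive step entirely.
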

%\xy{revised}
\begin{proof}
It suffices to prove \eqref{e:MedLB2}. Indeed, if 
	\eqref{e:MedLB2} holds for some $\delta$ and $r_0$, then
	%taking $\delta = \delta'/2$,
	using
	\eqref{e:MedLB2} and
	Lemma \ref{lemgeom1a}-(i) readily yields
	(\ref{e:MedLB}) for some new, possibly smaller, choice of $r_0$.

	Without loss of generality we may assume $\eps < 1 < \rho $.
	Let $r>0$, and let $B \subset A$ be compact with $\eps r
	\leq \diam (B) \leq \rho r$.
	If $B \subset A^{(-r)}$ we can use Lemma \ref{l:Med1}
	%the preceding lemma,
	so it suffices to consider the case where 
	$B \cap (\partial A)^{(r)} \ne\emptyset$.

	Let $x_0$ be a closest point of
	$B $ to $\partial A$.
	Without loss of
	generality (after a rotation and translation),
	we can assume that the closest point of $\partial A$ to $x_0$  
	lies at the origin, and $x_0 = he_{d}$ for some $h \in [0,r] $,
	and that within some neighbourhood of the origin,
	$A$ coincides with the closed epigraph of a
	function $\phi:U\to \R$
	with $U$ an open convex neighbourhood of the origin
	 in $\R^{d-1}$.
	 %(locally this is true after a rotation and translation).
	As in the proof of Lemma \ref{lemgeom3},
	we can find $K = K(d,A) \in (1,\infty)$ such that
	\bea
	|\phi(u)| \leq  (K/9)\|u\|^2 \leq
	K \rho^2 r^2, ~~~ \forall u \in \R^{d-1} ~{\rm with}~
	\|u\| \leq 2\rho r.
	\label{0127b3}
	\eea
	Assume $r \leq  \eps/(144 d K \rho^2)$. 
	Let $\pi: \R^d \to \R^{d-1} $ denote projection onto
	the first $d-1$ coordinates, and
	for $1 \leq i \leq d$,
	let $\pi_i : \R^d \to \R$ denote projection onto the
	$i$th coordinate.
	Define the set $H^-$ (slightly less than half a ball of radius $r$:
	see Figure \ref{Usteer}) by 
	\begin{align}
	H^-:= \{z\in B_r(x_0):  \pi_d(z) < \pi_d(x_0) - K\rho^2 r^2 \}. 
		\label{e:S0def}
	\end{align}
%`	Now we consider \eqref{e:MedLB2}. First suppose $\diam \pi_d(B)\ge \eps  r/d$ and let $x^+,x^-,y$ be as before in this case. 	
	%Since $x_0$ is a closest point of $B$ to $\partial A$,
	For all $w \in B$ we have $\| \pi(w)\| \leq \|w-x_0\| \leq \rho r$,
	so by \eqref{0127b3},
	\begin{align}
	\pi_d(x_0)=\dist(x_0,\partial A)\le \dist(w,\partial A)\le \pi_d(w) 
	+  |\phi(\pi(w))| \le \pi_d(w)+K\rho^2 r^2.
		\label{0112a}
	\end{align}
Therefore any $z\in H^-$, $w \in B$ satisfy
$\pi_d(z) <  \pi_d(w)$, so that  
$H^-\subset (B\oplus B_r(o)) \setminus B$. 

We can bound above the volume difference of a
half-ball and $H^-$ by the volume of a cylinder of thickness $K\rho^2 r^2$
with base of radius $r$. 
Using this and the union bound, we obtain that
\begin{align}
	|B_r(x_0)\cap A|\le (\theta_d/2) r^d
	+ |H^-\cap A|
	+ \theta_{d-1}r^{d-1} K\rho^2 r^2.
	\label{0115a}
\end{align}
	%(so $\pi_d$ is the same as $h$). 

	 For at least one $i \in \{1,\ldots,d\}$,
	we must have 
	 $\diam (\pi_i(B)) \geq \eps r/d$.
	 We distinguish the cases where this  holds for
	 $i=d$, and where it holds for some $i \leq d-1$.

	 %\begin{figure}[p]
	 \begin{figure}
\center
\includegraphics[width=0.49\textwidth, trim= 0 10 5 20]{Usteer.pdf}
 \includegraphics[width=0.49\textwidth, trim= 0 10 5 20, clip]{Lsteer.pdf}
\caption{\label{Usteer}
		 Illustration of the proof of Lemma \ref{l:Med2}.\\
		 Left: when $\diam (\pi_d(B) ) \geq \eps r/d$,
		 the sets $B, H^+, H^-, S$ are disjoint. 
		 The point
		 $x^-$ (not indicated) could be the same
		 as $x_0$; if not, it is only slightly lower
		 than $x_0$. 
		 \\
		 Right:
		 when $\diam (\pi_1(B) ) \geq \eps r/d$,
		 the sets
		 $B, H^-, Q^+, Q^-, S$
		 are  disjoint.
		 }
\end{figure}

 First suppose  $\diam (\pi_d(B)) \geq \eps r/d$.
Choose $x^+ \in B$ of maximal height (i.e., maximal $d$-coordinate),
 $x^- \in B$ of minimal height, and $y \in B$ of maximal $1$-coordinate
		 (see Figure \ref{Usteer} (Left)).

For all $z \in B \oplus B_r(o)$ we
	have $\|\pi(z)\| \leq \|z - x_0\| \leq 2 \rho r$ so
	by (\ref{0127b3}) we have $|\phi(\pi(z))| \leq K \rho^2r^2$. 
	Applying this in the case $z= x^-$, we deduce that
	\bean
	\pi_d(x^+) \ge \pi_d(x^-) + \eps  r/d \geq (\eps  r/d) - K \rho^2 r^2
	\geq (8/9)\eps  r/d,
	\eean
	and thus
	\bean
	\pi_d(x^+) - \phi(\pi(z)) \geq (8/9)\eps  r/d - K \rho^2 r^2
	\geq (7/9)\eps  r/d,
	~~~~
	%\label{0201a}
	\forall z \in B \oplus B_r(o).
	\eean
	Let $H^+ : = \{z \in B_r(x^+): \pi_d(z) > \pi_d(x^+) \}$
		 (see Figure \ref{Usteer}).
	Then $H^+  \cap B = \emptyset$, since $x^+$
	is a point of maximal height in $B$.
	Also $H^+ \subset B \oplus B_r(o)$, so
	%by (\ref{0201a}), for
	for all $z \in H^+ $ we have
	$$
	\phi(\pi(z)) \leq K \rho^2r^2 \leq
	(\eps/(9d))r
	\leq
	\pi_d(x^+) < \pi_d(z),
	$$
	so $z \in A$.
	Also, since $ \pi_d (z) > \pi_d(x^+) \geq \pi_d(x_0)$ we have
	from \eqref{e:S0def} that $z \notin H^-$. Hence
	 $H^+ \subset A \setminus H^-$.

	Now consider $y$. For $1 \leq i \leq d$ let
	$e_i$ denote the $i$th unit coordinate vector.
	Define a point $\ytil$ slightly to the right of
	$y$ by
	\begin{align*}
		\ytil := \begin{cases}
			y 
			+ (\eps r/(8d)) e_1
			- (\eps r/(8d)) e_d
			& {\rm if} ~
			\pi_d(y) \geq (\pi_d(x^+) + \pi_d(x^-))/2
			\\
			y + (\eps r/(8d)) e_1
			+ (\eps r/(8d)) e_d
			& {\rm if} ~
			\pi_d(y) < (\pi_d(x^+) + \pi_d(x^-))/2,
		\end{cases}
	\end{align*}
	and define the small ball $S:= B_{\eps r/(9d)}(\ytil).$
	Then $|S|= \delta_1 r^d$, where we set 
	$\delta_1 := \theta_d(\eps/(9d))^d$.
%
%	Define the half-ball
%	$$
%	H := \{z \in B_r(y): \pi_1(z) > \pi_1(y)\}.
%	$$
%	Then $H \cap B = \emptyset$. We aim to
%	find a horizontal slice
%	$S$ of $H$ of non-vanishing proportionate volume,
%	that is contained in $A$ and
%	disjoint from $B^+$.
	%If $\pi_d(y) \geq \eps  r/(2d)$, take 
%	If $\pi_d(y) \geq \eps  r/(2d)$, take 
%	\begin{align}\label{e:S_+}
%	S = \{z \in H: \pi_d(y) - (\eps  r/(4d)) \leq \pi_d(z) \leq \pi_d(y)\}.
%	\end{align}
	
	Suppose $\pi_d(y) \geq (\pi_d(x^+) + \pi_d(x^-))/2$ (as
	well as $\diam(\pi_d(B)) \geq \eps r/d$).

	Then for all $z \in S$ we have
	$\pi_d(z) \leq \pi_d(y) \leq \pi_d(x^+)$ so $z \notin H^+$.
%	Also $\pi_d(z) \geq \eps  r/(4d) \geq 2K \rho^2r^2$
%	so $\pi_d(z) \geq \phi(\pi(z))$ and $z \in A$.
Moreover
%provided $r$ is small enough
$$
\pi_d(z) \geq \pi_d(y) - \eps r/(4d) \geq \pi_d (x^-) +
\eps r/(4d) \geq \pi_d(x_0) + \eps r/(8d)
%= K \rho^2 r^d
$$
by \eqref{0112a}, applied to $w = x^-$.
Therefore $z \notin H^-$ by \eqref{e:S0def}, and also (by \eqref{0127b3})
$\pi_d(z) \geq \phi(z)$ so $z \in A $. Thus 
 $S \subset 	A \setminus ( H^+ \cup H^-)$ in this case.

	Now suppose $\pi_d(y) < (\pi_d(x^+) + \pi_d(x^-))/2$ (as
	well as $\diam(\pi_d(B)) \geq \eps r/d$).

	%If $\pi_d(y) < \eps  r/(2d)$, then instead of
	%\eqref{e:S_+} take 
	%\begin{align}
	%\label{e:S_-}
	%S := \{z \in H: 
	%\pi_d(y) + 2K \rho^2 r^2 \leq \pi_d(z) \leq \pi_d(y) + \eps  r/(4d)\}.
	%\end{align}
	Then for all $z \in S$ we have
	$\pi_d(z) \leq \pi_d(y) + \eps r/(4d)
	%\eps  r/(2d)+  \eps  r/(4d)  
	\leq \pi_d(x^+)$, so $z \notin H^+$. Also,
since $\pi_d(y)\ge  \phi(\pi(y)) \geq -  K \rho^2 r^2$,
%provided $r$ is small enough
we have
$$
\pi_d(z) \geq \pi_d(y) + \eps r/ (72 d) \geq  K \rho^2 r^2 
\geq \phi(\pi(z)),
$$
so $z \in A$,
and also by \eqref{0112a} applied to $w =y$, we
have $\pi_d(z) \geq \pi_d(x_0)$, so  $z \notin H^-$.
Therefore $S \subset A \setminus (H^+ \cup H^-)$ in this case too. 
Thus, whenever $\diam(\pi_d(B)) \geq \eps r/d$, we have
\begin{align}
| (B \oplus B_r(o)) \cap A| \geq |B| + |H^+| + |S| + |H^-\cap A|.
	\label{0115b}
\end{align}

%
%\begin{align}
%\label{e:2103}
%\end{align}
%
Combining \eqref{0115b} and \eqref{0115a},
 provided $r \leq \delta_1/(2K \rho^2 \theta_{d-1})$ we have  
	\begin{align}
	%|(B \oplus B_r(o)) \cap A| \geq |B| + |B_r^+| &
		%+|S| \geq |B| + \theta_d
	%r^d/2 + \delta_1 r^d,
		| (B \oplus B_r(o)) \cap A|  \ge |B| & + |B_r(x_0)\cap A| + (\delta_1/2) r^d, 
		%\nonumber \\
		& {\rm if}~ 
		%\pi_d(y) \geq \eps  r/(2d)~{\rm and}~
		\diam(\pi_d(B)) \geq \eps r/d.
	%\label{0201b}
\label{e:2103}
	\end{align}

	Now suppose $\diam \pi_i(B) \geq \eps  r/d$ for some 
	$i \in \{1,\ldots,d-1\}$. We shall consider here the
	case where this holds for $i=1$; the other cases may be treated
	similarly.

	Let $x^-, x^+, y$ be points in $B$ of minimal $1$-coordinate,
	maximal $1$-coordinate, and maximum height respectively.
	Let $\delta_2 := \delta_1/(2 \theta_{d-1})$.
	Define the sets $Q^-$ and $Q^+$ (slightly less than quarter-balls
	of radius $r$: see Figure \ref{Usteer} (Right)) by
	\bean
	Q^- : = \{z \in B_r(x^-): \pi_d(z) \geq \pi_d(x^-) + \delta_2 r,
	\pi_1(z) < \pi_1(x^-) \}; \\
	Q^+ : = \{z \in B_r(x^+): \pi_d(z) \geq \pi_d(x^+) + \delta_2 r,
	\pi_1(z) > \pi_1(x^+) \}.
	\eean
	By \eqref{0127b3}, for $z \in Q^-$ we have
	$|\phi(\pi(z))| \leq K\rho^2 r^2$,
	so 
	provided $r < \delta_2/(2K\rho^2)$,
	for all $z \in Q^-$ we have
	$$
	\pi_d(z) \geq \pi_d(x^-) + \delta_2 r \geq \delta_2 r - K \rho^2 r^2
	\geq K \rho^2 r^2 \geq \phi(\pi(z)),
	$$
	so that $z \in A$. Also by \eqref{0112a} applied to $w= x^-$ we have
	$\pi_d(z) \geq \pi_d(x^-) \geq \pi_d(x_0)-K \rho^2 r^2$,
	so $z \notin H^-$ by \eqref{e:S0def}.
	Thus
	$Q^- \subset A \setminus H^-$, and similarly $Q^+ \subset A \setminus
	H^-$. Also
	$(Q^- \cup Q^+) \cap B = \emptyset$, and
	$|Q^- \cup Q^+| \geq (\theta_d - 2 \delta_2 \theta_{d-1}) r^d/2
	= (\theta_d - \delta_1) r^d/2$.

	We define a point $\ytil$ slightly above $y$ by
	\begin{align*}
		\ytil := \begin{cases}
			y + (\eps r/(8d)) e_d 
			 + (\eps r/(8d)) e_1 &{\rm if~} 
			 \pi_1(y) \leq (\pi_1(x^-) + \pi_1(x^+) )/2
			\\
			y + (\eps r/(8d)) e_d 
			 - (\eps r/(8d)) e_1 &{\rm if~} 
			 \pi_1(y) > (\pi_1(x^-) + \pi_1(x^+) )/2,
		\end{cases}
	\end{align*}
	and set $S := B_{\eps r/(9d)}(\ytil)$: see Figure \ref{Usteer} (Right).
	Then $|S| = \delta_1 r^d$
	as before.

Then for all $z \in S$ we have
$\pi_d(z) > \pi_d(y) \geq \pi_d(x_0)$, so that $z \notin B$ and
$z \notin H^-$.
Also
	$\phi(\pi(z)) \leq K \rho^2 r^2 < \eps r/(72 d) \leq \pi_d(z)$, 
	so $z \in A$. Moreover,  
	if $\pi_1(y) > (\pi_1(x^-) + \pi_1(x^+))/2$
	then 
	$$
	\pi_1(z) - \pi_1(x^-)  = \pi_1(y) - \pi_1(x^-)
	+ (\pi_1(z) - \pi_1(y)) \geq
	(\eps r/(2d))
	- (\eps r/(4d))  >0 ,
	$$
	while 
	if $\pi_1(y) \leq (\pi_1(x^-) + \pi_1(x^+))/2$ then
	$\pi_1(z) - \pi_1(x^-) > \pi_1(y) - \pi_1(x^-) \geq 0$
	so in both cases  $z \notin Q^- $.
	Similarly  $z \notin Q^+$.
	%$\pi_1(x^+) - pi_1(z)  > 0$, so that
	%\cup Q^+$. 
	Thus $S \subset A \setminus(Q^+ \cup Q^-
	\cup B \cup H^-)$.
	%
	%For any $z\in H^-$, 
	%$\pi_d(z)\le \min \pi_d(B) \le \min( \min(\pi_d(Q^+)), 
	%\min(\pi_d(Q^-)), \min(\pi_d(S)))$. This shows
	%$H^-\cap Q^+ = H^-\cap Q^- = H^- \cap S=\emptyset$.
	Combining all
	of this and using \eqref{0115a} in the third line below yields
	\begin{align*}
		| (B \oplus B_r(o)) \cap A| &  \geq |B| +
	|Q^- \cup Q^+| + |S|  + |H^-\cap A| \\
		& \geq  |B| +  ((\theta_d + \delta_1)/2) r^d + |H^-\cap A| \\
		& \ge  |B| + |B_r(x_0)\cap A| - \theta_{d-1}r^{d-1} K\rho^2 r^2 + (\delta_1/2) r^d  \\ 
		& \ge  |B| + |B_r(x_0)\cap A| + (\delta_1/4) r^d
		~~~{\rm if}~ \diam (\pi_1(B)) \geq \eps r/d,
	\end{align*}
provided $r \leq \delta_1/(4K\rho^2\theta_{d-1})$. 
Combined with \eqref{e:2103}, 
this shows that \eqref{e:MedLB2} holds for $r$ small if we take
$\delta = \delta_1/8$.
\end{proof}

\section{Poisson approximation for the $k$-isolated vertices}

Fix $k \in \NN$.
We say a vertex is $k$-isolated if its degree is at most $k-1$.
Given $n,r>0$ let $\xxi_{n,r}$ denote
the number of $k$-isolated vertices in $G(\eta_n,r)$:
\begin{align}
	\label{e:def_isoVer}
	\xxi_{n,r} := \sum_{x\in \eta_n} \1\{\eta_n(B(x,r)) \le k\}.
\end{align}
The
goal of this section is to prove (in Proposition \ref{p:poisson2d} below) 
Poisson approximation for $\xxi_{n,r}$ when $n$ is large and $r$ is small.

Throughout this section we adopt our working assumption on $\nu$. Moreover
we assume   either that $d \geq 2$ and
the support $A$ of $\nu$  
is compact with $C^2$
%todo: need A=\bar{A^o}?
boundary, or  that $d=2$ and $A$ is
a polygon. 
%todo:{\bf (hope to drop convexity eventually)}.
We do {\em not} assume in this section that $\nu$ is necessarily 
uniform on $A$.
Recall that $\eta_n$ is the Poisson process in $\RR^d$ with intensity
measure $n \nu$. 

A fundamental identity used throughout this paper is the Mecke
 equation which basically says that the law of a Poisson
 process $\eta$ conditioned on having a point mass at $x$ is
 that of $\eta \cup \{x\}$.
 More precisely, let $\eta$ be a Poisson process on $\RR^d$ with
 diffuse intensity measure $\la$
 (that is, $\la$ does not charge atoms).
 The Mecke equation says that
\begin{align}\label{e:mecke}
	\EE\Big[ \sum_{x \in \eta} f(x,\eta) \Big] = 
	\int
	\EE\left[ 
	f(x,\eta \cup \{x\}) \right]\la(dx),
\end{align} 
for all $f: \RR^d\times \bN(\RR^d)\to \RR$ such that both sides of the identity are finite, 
where $\bN(\RR^d)$ denotes the space of all locally finite
subsets of 
$\RR^d$ - see \cite[Chapter 4]{LP18} for a more general statement. 

By the Mecke equation, given $n, r >0$ we have
\begin{align}
	\EE[\xxi_{n,r}] = n\int_A p_{n,r}(x) \nu(dx),
	\label{e:EEF}
\end{align}
where for each $x \in A$ we set
\begin{align}
	\label{e:pnj}
	p_{n,r}(x) : = \PP[\eta_n(B(x,r))\le k-1]
	= \sum_{j=0}^{k-1}( n (\nu(B_r(x)))^j/j!) \exp(-n \nu(B_r(x))). 
\end{align}

Given random variables $X,Z$ taking values in $\NN_0:= \NN\cup\{0\}$, define
the total variation distance
\begin{align*}
\dtv(X, Z) := \sup_{B\subset \NN_0}|\PP[X\in B] - \PP[ Z \in B]|.
\end{align*}
Given $\al >0$, 
let $\PRV_\al$ be Poisson distributed with mean $\al$.

\begin{proposition}[Poisson approximation]
	\label{p:poisson2d} 
	\label{p:poisson3d+} 
	Let $\beta' >0$.
	Let $(r_n)_{n \geq 1}$ be chosen
	so that $r_n \geq 0$ for all $n$ and
	\begin{align}
		\lim_{n \to \infty} \EE[\xxi_{n,r_n}] = \beta'.
	\label{e:Econv}
	\end{align}
	Then
	we have   
\begin{align}
	\dtv(\xxi_{n,r_n}, \PRV_{\EE [\xxi_{n,r_n}]}) 
	= O((\log n)^{1-d}) ~~~{\rm as}~ n \to \infty.
	\label{e:TVrate}
\end{align}
	In particular, with $L_{n,k} = L_k(\eta_n)$ defined at
	\eqref{e:defL_n},  
\begin{align}
	\PP[ L_{n,k} \leq r_n ] -   \exp(-\EE [\xxi_{n,r_n}]) 
	= O((\log n)^{1-d}) ~~~{\rm as}~ n \to \infty.
	\label{e:Lrate}
\end{align}
\end{proposition}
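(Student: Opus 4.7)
The plan is to apply the Chen-Stein method for functionals of a Poisson input. Writing $\xxi_{n,r} = \sum_{x \in \eta_n} h_r(x, \eta_n)$ with $h_r(x, \eta) := \1\{\eta(B_r(x)) \le k\}$, a standard Palm-theoretic form of the Chen-Stein inequality yields
\begin{align*}
\dtv(\xxi_{n,r}, \PRV_{\lambda_{n,r}}) \le \min(1, \lambda_{n,r}^{-1})(b_1 + b_2),
\end{align*}
where $\lambda_{n,r} := \EE\xxi_{n,r}$ and
\begin{align*}
b_1 &:= n^2 \iint_{A \times A} p_{n,r}(x) p_{n,r}(y) \1\{\|y-x\| \le 2r\} f(x) f(y)\,dx\,dy, \\
b_2 &:= n^2 \iint_{A \times A} q_{n,r}(x,y) \1\{\|y-x\| \le 2r\} f(x) f(y)\,dx\,dy,
\end{align*}
with $q_{n,r}(x,y) := \PP[h_r(x, \eta_n \cup \{x,y\}) h_r(y, \eta_n \cup \{x,y\}) = 1]$. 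Pairs at distance greater than $2r$ contribute nothing, since $B_r(x) \cap B_r(y) = \emptyset$ makes the two isolation events depend on the Poisson on disjoint regions, hence independent.

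The $b_1$ term is easily dispatched using Lemma \ref{lemgeom1a}(i), which gives $|B_r(x) \cap A| \ge (\theta_d/2-\eps) r^d$ uniformly in $x \in A$ for small $r$; hence $p_{n,r}(x) \le C(nr^d)^{k-1} e^{-nf_0(\theta_d/2-\eps)r^d}$. Since \eqref{e:Econv} forces $nr_n^d = \Theta(\log n)$, one obtains $p_{n,r_n}(x) = O(n^{-c})$ uniformly in $x$ for some $c > 0$ (absorbing polylogarithmic corrections), and combining with $\int_A p_{n,r}f\,dx = \lambda_{n,r}/n$ and $|A \cap B_{2r}(x)| \le Cr^d$ gives $b_1 = O(n^{1-c}(\log n)^{O(1)}) = o((\log n)^{1-d})$.

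The delicate term is $b_2$. Assuming without loss of generality that $\dist(x, \partial A) \le \dist(y, \partial A)$, decompose $B_r(x)$ and $B_r(y)$ into the three pairwise disjoint regions $B_r(x) \setminus B_r(y)$, $B_r(x) \cap B_r(y)$, $B_r(y) \setminus B_r(x)$; the Poisson counts on these are independent, and a brief calculation yields
\begin{align*}
q_{n,r}(x,y) \le p_{n,r}(x) \cdot \PP\bigl[\eta_n(A \cap B_r(y) \setminus B_r(x)) \le k-1\bigr].
\end{align*}
The crucial geometric lower bound $|A \cap B_r(y) \setminus B_r(x)| \ge c r^{d-1}\|y-x\|$ is supplied by Lemma \ref{lemgeom1a}(ii) in the smooth case, and by Lemma \ref{l:poly_diff_ball} in the polygonal case off the corner neighbourhood $\Cor^{(Kr)}$ (which has Lebesgue measure $O(r^2)$ and whose contribution to $b_2$ is a separate $o((\log n)^{1-d})$ term). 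The Poisson tail then gives
\begin{align*}
q_{n,r}(x,y) \le C\, p_{n,r}(x) (nr^{d-1}\|y-x\|)^{k-1} e^{-cnr^{d-1}\|y-x\|}.
\end{align*}
Integrating over $y \in B_{2r}(x)$ in polar coordinates about $x$ and substituting $u = cnr^{d-1}\|y-x\|$ absorbs both the polynomial prefactor and the volume element into a finite Gamma integral $\int_0^\infty u^{d+k-2} e^{-u}\,du$, yielding $\int_{B_{2r}(x)} q_{n,r}(x,y) f(y)\,dy \le C' p_{n,r}(x)(nr^{d-1})^{-d}$. A final integration over $x$ using $\int_A p_{n,r} f\,dx = \lambda_{n,r}/n$ gives $b_2 \le C'' n \lambda_{n,r_n} (nr_n^{d-1})^{-d} = O((\log n)^{1-d})$, since $nr_n^{d-1} = \Theta(n^{1/d}(\log n)^{(d-1)/d})$ forces $(nr_n^{d-1})^{-d} = O(n^{-1}(\log n)^{1-d})$.

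The bound \eqref{e:Lrate} follows immediately from \eqref{e:TVrate}: since $\{L_{n,k} \le r_n\} = \{\xxi_{n,r_n} = 0\}$, we have $|\PP[L_{n,k} \le r_n] - e^{-\lambda_{n,r_n}}| \le \dtv(\xxi_{n,r_n}, \PRV_{\lambda_{n,r_n}})$. The main obstacle is the volume lower bound $|A \cap B_r(y) \setminus B_r(x)| \ge c r^{d-1}\|y-x\|$ uniformly near $\partial A$; without this, $q_{n,r}$ would lack the exponential decay in $\|y-x\|$ needed to bring the pair sum down to the claimed rate. In the polygonal case, separately handling the corner neighbourhood adds a further layer of technicality.
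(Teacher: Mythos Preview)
Your proof is correct and follows essentially the same route as the paper: apply a local-dependence Chen--Stein bound (the paper cites \cite[Theorem 6.7]{Pen}, with interaction radius $3r$ rather than your $2r$, an immaterial difference), dispatch $b_1$ via the uniform lower bound on $|B_r(x)\cap A|$, and control $b_2$ via the key geometric estimate $|A\cap B_r(y)\setminus B_r(x)|\ge c\,r^{d-1}\|y-x\|$ from Lemma~\ref{lemgeom1a}(ii) (or Lemma~\ref{l:poly_diff_ball} plus a separate corner contribution in the polygonal case). One minor slip: the identity $\{L_{n,k}\le r_n\}=\{\xxi_{n,r_n}=0\}$ holds only on $\{|\eta_n|\ge k+1\}$, and the paper absorbs the exceptional event via $\PP[|\eta_n|\le k]=O(n^k e^{-n})$.
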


We prepare for proving this with three lemmas, the first of 
which is used repeatedly later on.
\begin{lemma}
	Under the WA, assuming either that $ \partial A \in C^2$
	or $d=2$ and $A$ is polygonal, there exists
	a constant $\delta_0>0$ (depending on $A$ and $f$)
	such that
	\begin{align}
		2 \delta_0 r^d \leq \nu(B_r(x)) \leq
		\theta_d r^d \fmax,  ~~~~~ \forall ~ x \in A, r \in (0,1].
		\label{e:ballbds}
	\end{align}
\end{lemma}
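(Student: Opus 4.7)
The upper bound is immediate: since $f \leq \fmax$ under the WA, for any $x \in A$ and $r>0$,
\[
\nu(B_r(x)) = \int_{B_r(x) \cap A} f(y)\,dy \leq \fmax\, |B_r(x)| = \theta_d r^d \fmax.
\]
So the work lies in the lower bound. Since $f \geq f_0 > 0$ on $A$ by the WA, it suffices to produce a constant $c_1 > 0$ such that $|B_r(x) \cap A| \geq c_1 r^d$ for all $x \in A$ and $r \in (0,1]$; then one may take $\delta_0 = f_0 c_1 / 2$.

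My plan is to split into small and large $r$. For small $r$, when $\partial A \in C^2$, I would apply Lemma \ref{lemgeom1a}(i) with $\eps = \theta_d/4$, which directly yields an $r_0 > 0$ such that $|B_r(x) \cap A| \geq (\theta_d/4) r^d$ for all $x \in A$ and $r \in (0,r_0)$. In the polygonal case with $d=2$, I would give a short direct geometric argument: let $r_0$ be smaller than the minimum distance between non-adjacent edges of $A$, and let $\alpha_{\min}$ be the smallest interior angle of $A$. For any $x \in A$, the set $B_r(x) \cap A$ contains a circular sector of angle at least $\alpha_{\min}$ (with angle $\pi$ if $x$ lies on an edge but not at a corner, and angle $2\pi$ if $x$ is interior), so $|B_r(x) \cap A| \geq (\alpha_{\min}/2) r^2$ for $r \in (0,r_0)$.

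For the range $r \in [r_0,1]$, monotonicity of $r \mapsto |B_r(x) \cap A|$ gives
\[
|B_r(x) \cap A| \geq |B_{r_0}(x) \cap A| \geq c_1 r_0^d \geq c_1 r_0^d \cdot r^d,
\]
where the last inequality uses $r \leq 1$. Taking the smaller of $c_1$ and $c_1 r_0^d$ yields a single constant that works for all $r \in (0,1]$, completing the proof.

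I do not expect any serious obstacle: the lemma is a uniform two-sided volume estimate, and the only nontrivial ingredient is the uniform lower bound on the fraction of a small ball lying inside $A$, which is handled by Lemma \ref{lemgeom1a}(i) in the smooth case and by the elementary sector-angle observation in the polygonal case.
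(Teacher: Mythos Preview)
Your proposal is correct and follows essentially the same approach as the paper: the upper bound is immediate, and the lower bound comes from Lemma~\ref{lemgeom1a}(i) in the $C^2$ case and a direct geometric observation in the polygonal case. The paper's own proof is actually terser than yours---it simply says the lower bound ``follows from Lemma~\ref{lemgeom1a}-(i)'' in the smooth case and ``can be seen directly'' in the polygonal case---so your inclusion of the sector argument and the monotonicity step extending from $r\in(0,r_0)$ to $r\in(0,1]$ fills in details the paper leaves implicit. One small point: in the polygonal sector argument you should also take $r_0$ smaller than half the minimum edge length (in addition to the minimum distance between non-adjacent edges), to rule out $B_r(x)$ meeting three consecutive edges; with that tweak the wedge picture is exactly as you describe.
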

\begin{proof} The second inequality is clear. The first inequality
	follows from
		 Lemma \ref{lemgeom1a}-(i)
		in the case where $ \partial A \in C^2$,
		and can be seen directly when $A$ is polygonal.
\end{proof}

	\begin{lemma}
		\label{l:rlog}
		Let $\beta' \in (0,\infty)$ and suppose
		that $(r_n)_{n \geq 1}$ satisfies
		\eqref{e:Econv}.  Then  we have that
		$\liminf_{n \to \infty}( nr_n^d/\log n)  \geq 1/(\fmax \theta_d)$,
		and
		$\limsup_{n \to \infty}( nr_n^d/\log n) < \infty$.
	\end{lemma}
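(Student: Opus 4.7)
The plan is to use the explicit formula \eqref{e:EEF}--\eqref{e:pnj} for $\EE[\xxi_{n,r_n}]$ together with the bilateral ball estimate \eqref{e:ballbds} to sandwich $\EE[\xxi_{n,r_n}]$ between two elementary expressions in $nr_n^d$, and then derive both bounds by contradiction along subsequences. Since $\nu$ is a probability measure supported in $A$ and the bounds we shall put on the integrand $np_{n,r_n}(x)$ in \eqref{e:EEF} are uniform in $x\in A$, the integration against $\nu$ will simply erase the $x$-dependence.

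For the liminf bound, I would keep only the $j=0$ term in \eqref{e:pnj} and use the upper half of \eqref{e:ballbds} to get $p_{n,r_n}(x) \geq \exp(-\fmax\theta_d nr_n^d)$ uniformly in $x\in A$, which on integrating yields
\begin{equation*}
\EE[\xxi_{n,r_n}] \;\geq\; n\exp(-\fmax\theta_d nr_n^d).
\end{equation*}
If there existed $\eta>0$ and a subsequence along which $nr_n^d \leq (1-\eta)(\fmax\theta_d)^{-1}\log n$, the right-hand side would exceed $n^{\eta}$ along that subsequence, contradicting \eqref{e:Econv}. Hence $\liminf nr_n^d/\log n \geq 1/(\fmax\theta_d)$.

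For the limsup bound, I would use the elementary inequality $\sum_{j=0}^{k-1}t^j/j! \leq C_k(1+t)^{k-1}$ (valid for all $t\geq 0$) applied with $t = n\nu(B_{r_n}(x))$, together with the lower half of \eqref{e:ballbds} inside the exponential, to obtain
\begin{equation*}
p_{n,r_n}(x) \;\leq\; C_k(1+\fmax\theta_d nr_n^d)^{k-1}\exp(-2\delta_0 nr_n^d).
\end{equation*}
Integrating and using $\nu(A)=1$ gives $\EE[\xxi_{n,r_n}] \leq C'\, n(1+nr_n^d)^{k-1}\exp(-2\delta_0 nr_n^d)$. If $nr_n^d/\log n\to\infty$ along some subsequence, the exponential would dominate the polynomial prefactor $n(nr_n^d)^{k-1}$ and the right-hand side would tend to zero, again contradicting \eqref{e:Econv}.

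There is no serious obstacle here: the lemma is a routine consequence of \eqref{e:ballbds} and \eqref{e:pnj}. The only point that must be watched is that the constants $\delta_0$ and $\fmax$ in \eqref{e:ballbds} do not depend on $x$ or $n$, which is exactly why that lemma was stated in the preceding paragraph in the form it has, and why it can be applied in this way.
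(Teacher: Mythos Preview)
Your proposal is correct and follows essentially the same approach as the paper: both arguments bound $p_{n,r_n}(x)$ uniformly in $x$ using the two halves of \eqref{e:ballbds}, keeping only the $j=0$ term for the liminf and all $k$ terms for the limsup, and then argue by contradiction that \eqref{e:Econv} would fail. The only cosmetic difference is that for the limsup the paper picks the explicit comparison sequence $s_n$ with $ns_n^d=\delta_0^{-1}\log n$, shows $\EE[\xxi_{n,s_n}]\to 0$, and invokes the monotonicity of $r\mapsto \EE[\xxi_{n,r}]$ to conclude $r_n\le s_n$ eventually, whereas you run the same estimate directly on $r_n$ along a hypothetical subsequence with $nr_n^d/\log n\to\infty$.
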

	\begin{proof}
		Let $\alpha \in (0,1/ (\fmax \theta_d) )$.
	If $nr_n^d < \alpha \log n$, then for all $x \in A$ we have
	$n \nu(B_{r_n}(x) ) \leq n \theta_d \fmax r_n^d \leq \alpha \theta_d \fmax
		\log n$. Therefore by \eqref{e:EEF}, 
		$
		\EE[ \xi_{n,r}] \geq n \int e^{-n \nu(B_{r_n}(x))} \nu(dx) 
	\geq n^{1- \alpha \theta_d \fmax },
		$
	so the condition (\ref{e:Econv}) implies that
	$n r_n^d \geq \alpha \log n$ for all large enough $n$.
		The first claim follows.

		For the second claim,
		let $\delta_0 >0$ be as in 
		\eqref{e:ballbds}. 
		Take $s_n >0$ so that $ns_n^d = \delta_0^{-1} \log n$.
		Using \eqref{e:ballbds}, 
		for some constant $c$ we have
		\begin{align*}
		n \int_A
			p_{n,s_n}(x)
			\nu(dx)
		\leq c n (\log n)^{k-1} \exp(-2 n \delta_0 s_n^d)
		= c (\log n)^{k-1}n^{-1},
		\end{align*}
		which tends to zero.
		Hence  by \eqref{e:Econv} 
		we have $r_n \leq s_n$ for $n $ large,
		and hence the second claim.
	\end{proof}

Given $x,y \in \R^d$ and $n, r  >0$,
setting $\eta_n^x := \eta_n\cup\{x\}$,
we define
the  quantity
\begin{align*}
	q_{n,r}(x,y) : = \PP[\eta^y_n(B(x,r))\le k-1, \eta^x_n(B(y,r))\le k-1].
\end{align*}
Our proof of Proposition \ref{p:poisson2d} 
is based on the following estimate which was proved in \cite{Pen}
by the local dependence approach of Stein's method. 
\begin{lemma}[{\cite[Theorem 6.7]{Pen}}]
	\label{l:Stein}
	Let $n, r>0$. Then
	\begin{align*}
	\dtv(\xxi_{n,r},\PRV_{\EE[\xxi_{n,r}]})\le 3(I_1(n,r)+I_2(n,r))		
	\end{align*}
where 
	\begin{align*}
		I_1(n,r) &= n^2 \int 
		\1\{\|x-y\|\le 3r\} p_{n,r}(x)p_{n,r}(y) \nu^2(d(x,y))\\
		I_2(n,r) &= n^2 \int \1\{\|x-y\|\le 3r\} q_{n,r}(x,y)
		\nu^2(d(x,y)).
	\end{align*}
\end{lemma}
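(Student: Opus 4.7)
I apply Lemma~\ref{l:Stein} to reduce~\eqref{e:TVrate} to showing $I_1(n,r_n) + I_2(n,r_n) = O((\log n)^{1-d})$. Lemma~\ref{l:rlog} then gives $n r_n^d = \Theta(\log n)$, so $(n r_n^{d-1})^{-d} = n^{-1}(\log n)^{1-d}$, the quantity that ultimately drives the rate. The bound~\eqref{e:Lrate} follows immediately from~\eqref{e:TVrate}, since $\{L_{n,k} \leq r_n\} = \{\xxi_{n,r_n} = 0\}$ and $\PP[\PRV_\lambda = 0] = e^{-\lambda}$, so~\eqref{e:Lrate} is the total-variation bound applied to the singleton $\{0\}$.

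For both $I_1$ and $I_2$, symmetry in $(x,y)$ allows me to restrict to the region where $\dist(x,\partial A) \leq \dist(y,\partial A)$ at the cost of a factor of $2$. Setting $A^*_y := (A \cap B_{r_n}(y)) \setminus B_{r_n}(x)$, the crucial geometric input is $\nu(A^*_y) \geq c_1 r_n^{d-1}\|y-x\|$, supplied by Lemma~\ref{lemgeom1a}(ii) in the $C^2$ case and by Lemma~\ref{l:poly_diff_ball} in the polygonal case (away from corners). Two elementary monotone dominations then collapse both integrands to the same form. First, $\{\eta_n(B_{r_n}(y)) \leq k-1\} \subset \{\eta_n(A^*_y) \leq k-1\}$, hence $p_{n,r_n}(y) \leq \PP[\eta_n(A^*_y) \leq k-1]$ and
\begin{equation*}
p_{n,r_n}(x)\,p_{n,r_n}(y) \leq p_{n,r_n}(x)\,\PP[\eta_n(A^*_y)\leq k-1].
\end{equation*}
Second, since $A^*_y$ is disjoint from $B_{r_n}(x)$, $\eta_n(A^*_y)$ is independent of $\eta_n^y(B_{r_n}(x))$; combining this with the trivial $\PP[\eta_n^y(B_{r_n}(x))\leq k-1] \leq p_{n,r_n}(x)$ gives $q_{n,r_n}(x,y) \leq p_{n,r_n}(x)\,\PP[\eta_n(A^*_y) \leq k-1]$. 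Thus both $I_1$ and $I_2$ are each bounded by $2\,\EE[\xxi_{n,r_n}]\cdot\sup_{x \in A} M(x)$ where $M(x) := n\int_{B_{3r_n}(x)} \PP[\eta_n(A^*_y) \leq k-1]\,f(y)\,dy$.

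The first factor is $O(1)$ by hypothesis. For $M(x)$, I expand the Poisson tail as $\PP[\eta_n(A^*_y) \leq k-1] = \sum_{j=0}^{k-1} (n\nu(A^*_y))^j e^{-n\nu(A^*_y)}/j!$, switch to spherical coordinates $y = x + t\omega$, and substitute $u = c_1 n r_n^{d-1} t$; each $j$-term becomes a constant multiple of
\begin{equation*}
n(n r_n^{d-1})^{-d}\int_0^{3 c_1 n r_n^d} u^{j+d-1} e^{-u}\,du = O\bigl(n\cdot n^{-1}(\log n)^{1-d}\bigr) = O((\log n)^{1-d}),
\end{equation*}
and summing the finitely many $j$ preserves this rate. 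The main technical subtlety is the polygonal case, where Lemma~\ref{l:poly_diff_ball} fails within $O(r_n)$ of the finitely many corners of $A$. However, the corner region has Lebesgue measure $O(r_n^2)$, and the uniform bound $p_{n,r_n}(x) \leq C (\log n)^{k-1} e^{-c_2 n r_n^2}$ there (valid since even the sharpest corner angle yields $\nu(B_{r_n}(x)) \geq c_2 r_n^2$ for some $c_2>0$) makes the corresponding contribution to $I_1 + I_2$ polynomially small in $n$, and thus negligible against $(\log n)^{1-d}$.
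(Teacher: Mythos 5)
There is a fundamental mismatch here: what you have written is not a proof of Lemma~\ref{l:Stein} at all, but rather a proof of Proposition~\ref{p:poisson2d}, \emph{using} Lemma~\ref{l:Stein} as an input. Your very first sentence is ``I apply Lemma~\ref{l:Stein} to reduce \eqref{e:TVrate} to showing $I_1(n,r_n)+I_2(n,r_n)=O((\log n)^{1-d})$'' --- i.e.\ you assume the statement you were asked to establish and then carry out the downstream estimates (which, for what it is worth, track the paper's proof of Proposition~\ref{p:poisson2d} quite closely, including the symmetrisation in $(x,y)$, the use of Lemma~\ref{lemgeom1a}(ii) or Lemma~\ref{l:poly_diff_ball} to lower-bound $\nu(B_{r_n}(y)\setminus B_{r_n}(x))$, and the separate treatment of the corner region in the polygonal case). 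But none of that addresses the assertion $\dtv(\xxi_{n,r},\PRV_{\EE[\xxi_{n,r}]})\le 3(I_1(n,r)+I_2(n,r))$ itself.

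A genuine proof of Lemma~\ref{l:Stein} requires the Stein--Chen method for Poisson approximation under local dependence. One writes $\xxi_{n,r}=\sum_{x\in\eta_n}\1\{\eta_n(B(x,r))\le k\}$ as a sum of indicators, observes that the indicator attached to $x$ is a function of $\eta_n$ restricted to $B(x,r)$ only, assigns to each point the dependency neighbourhood $B(x,3r)$, and then invokes the standard bound in which the ``$b_1$'' term (products of marginal probabilities over pairs in each other's neighbourhoods) yields $I_1$, the ``$b_2$'' term (joint occurrence probabilities over such pairs, computed via the Mecke equation, which is where the Palm quantities $q_{n,r}(x,y)$ and the factor $n^2\,\nu^2(d(x,y))$ come from) yields $I_2$, and the ``$b_3$'' term vanishes because of the strict locality of the dependence. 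The constant $3$ and the full argument are in the solution of the Stein equation; this is precisely the content of \cite[Theorem~6.7]{Pen}, which the paper itself does not reprove but cites. If you are permitted to cite that result, say so explicitly; otherwise you need to supply the Stein--Chen argument, and the material in your proposal does not do this.
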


\begin{proof}[Proof of Proposition \ref{p:poisson2d}] 
	Observe first that whenever $|\cP_n| \geq k+1$, the statement
		$L_{n,k} \leq r_n$ is equivalent to $\xxi_{n,r_n} =0$, so that
		$
		|\PP[L_{n,k} \leq r_n] - \PP[\xxi_{n,r_n} =0]| \leq 
		\PP[|\cP_n| \leq k] = O(n^k e^{-n}).
		$
		Therefore \eqref{e:TVrate} will imply \eqref{e:Lrate},
		so it suffices to prove \eqref{e:TVrate}.
%todo:	{\bf [Compare with similar proof in two sample paper]}
		
		By \eqref{e:ballbds},
		provided $n$ is large enough,
		for all $y \in A$ we have 
		\begin{align*}
			p_{n,r_n}(y) \le  k
			( nf_\mathrm{max} \theta_d r_n^d)^{k-1} 
			\exp(-n\nu(B(y,r_n)))
			\le \exp(- \delta_0 nr_n^d).
		\end{align*}
	 Therefore, using \eqref{e:EEF} in the second line below
	 we have
	\begin{align}
		\label{e:I1bound}
		I_1(n,r_n) & \le n (3^d \fmax \theta_d r_n^d) \exp(-
		\delta_0 nr_n^d)
		n \int  p_{n,r_n}(x) \nu(dx)
		\nonumber \\
		& \le \exp(- (\delta_0/2)
		nr_n^d) \EE[\xxi_{n,r_n}].
	\end{align}
		
	Now we estimate $I_2:= I_2(n,r_n)$.
	Since the integrand of $I_2$ is symmetric in
	$x$ and $y$,
\begin{align*}
	I_2 \leq 2 n^2 \int \1\{\|x-y\|\le 3r_n, \dist(x,\partial A)\le 
	\dist(y,\partial A)\} q_{n,r_n}(x,y) \nu^2(d(x,y)).
		\end{align*}
	To further simplify the integral, writing $B_x = B(x,r_n)$ and likewise for $B(y,r_n)$,
	we have
	\begin{align*}
		q_{n,r_n}(x,y) &
		\le \PP[\eta_n(B_x)\le k-1,
		\eta_n(B_y\setminus B_x)\le k-1] \\
		& = p_{n,r_n}(x) \PP[\eta_n(B_y\setminus B_x)\le k-1]. 
	\end{align*}

Consider first the case where $A$ has a $C^2$ boundary.
	If $\dist(x,\partial A) \leq \dist(y,\partial A)$,
	setting $\kappa_d := 2^{-3d-1}\theta_{d-1}$ and 
	using Lemma \ref{lemgeom1a}-(ii) for the lower bound and Fubini's
	theorem for the upper bound below, we have
	\begin{align*}
		f_0 \kappa_d \|y-x\|r_n^{d-1} \leq 
		\nu(B_y \setminus B_x) \leq \fmax \theta_{d-1}r_n^{d-1}
		\|y-x\|,
	\end{align*}
	and hence
	\begin{align*}
		q_{n,r_n}(x,y) &
		 \le p_{n,r_n}(x)
		\sum_{j=0}^{k-1}
		(n \fmax \theta_{d-1} r_n^{d-1} \|y-x\| )^j
		\exp(- \kappa_d f_0 \|y-x\| n r_n^{d-1}).
	\end{align*}
Therefore, we have
\begin{align*}
	I_2 & \le 2 \max(\fmax \theta_{d-1},1)^{k-1} n^2 \\
	& \times	\int_A
	%\int_{\|x-y\|\le 3r}
	\left( \int_{B(x,3r_n)}
	\sum_{j=0}^{k-1}
	(n r_n^{d-1} \|y-x\|)^j
	\exp(- \kappa_d f_0 \|y-x\| n r_n^{d-1}) \nu(dy) \right)
	p_{n,r_n}(x)  \nu(dx)   
	.
\end{align*}
A change of variables $z= nr_n^{d-1} (y-x)$ shows that the 
	inner
	integral is bounded by $c' r_n^d (nr_n^d)^{-d}$ for some finite constant $c'$. Together with \eqref{e:EEF}, this yields for
	some further constant $c''$ that 
\begin{align}
	I_2 \le 2c'' (nr_n^d)^{1-d} \EE[\xxi_{n,r_n}].
	\label{e:I2le}
\end{align}
	This, together with \eqref{e:I1bound} and \eqref{e:Econv},
	shows that $I_1+I_2 = O((nr_n^d)^{1-d})$; applying
	 Lemmas \ref{l:Stein} and \ref{l:rlog}
	 proves \eqref{e:TVrate} as required for this case.

	 Now consider the other case, where
$d=2$ and $A$ is polygonal. 
Let $x,y \in A$ with
$\|y-x\| \leq 3r_n$ and $\dist(x,\partial A) \leq \dist (y,\partial A)$.  
	By Lemma \ref{l:poly_diff_ball}, there exists 
	$\delta_1>0$ such that $\nu(B(y,r_n) \setminus B(x,r_n))
	\geq \delta_1 \|x-y\| r_n$.  
	Using this,
	we can estimate the contribution to $I_2$ from $x,y$ not too close to the corners similarly to how we estimated $I_2$ at \eqref{e:I2le} in  the
	previous case.
	
	Suppose instead that $x$ is close to a corner of $A$ and
	$\|x-y\|\le 3r_n$. By \eqref{e:ballbds}
	the contribution to $I_2$ from such pairs $(x,y)$ is at most 
	$c''' n^2 r_n^4  \exp(-\delta_2 nr_n^2)$ for suitable constants 
	$c''' <\infty, \delta_2>0$.
	Hence by Lemma \ref{l:rlog}, this
	contribution is $O(n^{-\delta_3})$ for some $\delta_3 >0$.
	The proof is now complete. 
\end{proof}

\section{Relating $L_{n,k}$ to $M_{n,k}$} \label{ss:lifted}
\label{s:RelateLM}
Throughout this section we assume that $\partial A \in C^2$ or
that $A$ is a convex polygon. We adopt our WA but do not
assume $f$ is necessarily constant on $A$.

Fix $k\in\NN$.
Recall that $L_{n,k}$ and $M_{n,k}$ were defined at 
(\ref{e:defL_n}) and (\ref{e:defM_n}).
While Proposition \ref{p:poisson2d} provides an understanding
of $\PP[L_{n,k} \leq r_n]$ for suitable $r_n$, for
Theorems \ref{t:smooth} and \ref{t:nonunif} we also
need to understand the limiting behaviour
of $\PP[M_{n,k} \leq r_n]$
and $\PP[M_{k}(\cX_n) \leq r_n]$. In this section, we work towards
this by showing (in Proposition \ref{p:L<M}) that 
$\PP[L_{n,k}\le r < M_{n,k}]$
and
$\PP[L_{k}(\cX_n) \le r < M_{k}(\cX_n) ]$
are small for $n$ large and $r$ small.

Suppose $\cX \subset \R^d$ is finite, and $r >0$.
%with $|\cX |\geq k+2$.
We adapt terminology from \cite[p. 282]{Pen}.
For $j \in \NN_0 = \NN \cup \{0\}$
a $j$-{\em separating pair} for the geometric
graph $G (\cX,r)$ means a pair of disjoint non-empty subsets
$\cY,\cY'$ of $\cX$ such that $G(\cY,r)$ and
$G(\cY', r)$ are both connected, $G(\cY\cup\cY', r)$ is not, and $\cX\setminus (\cY \cup \cY')$ contains at most $j$
points within distance $r$ of $\cY\cup \cY'$.  

When we need to refer to an individual set in a separating pair, we use the terminology {\em separating set}. That is, for $j \in \NN_0$
a $j$-separating set for the graph $G(\cX,r)$ is a set
$\cY \subset \cX$ such
that $G(\cY,r)$ is connected, and with $\Delta \cY$ denoting the set of
sites in $\cX \setminus \cY$ adjacent to $\cY$, we have $|\Delta \cY| \leq
j$ and $\cX \setminus (\cY \cup \Delta \cY) \neq \emptyset$.

\begin{lemma}
	\label{l:LrM}
Suppose $\cX \subset \R^d$ is finite with $|\cX |\geq k+2$. Let $r >0$, and 
suppose $L_k(\cX) \leq r < M_k(\cX)$. Then there exists a $(k-1)$-separating
	pair $(\cY,\cY')$ for $G(\cX,r)$  such that neither $\cY$ nor
	$\cY'$ is a singleton.
\end{lemma}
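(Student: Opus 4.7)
The plan is to extract a $(k-1)$-separating pair from a small vertex-cut of $G(\cX,r)$, and then use the minimum-degree lower bound forced by the hypothesis $L_k(\cX)\le r$ to rule out singletons on either side.

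Since $|\cX|\ge k+2 > k$ and $G(\cX,r)$ is not $k$-connected (because $r < M_k(\cX)$), by the definition of $k$-connectivity there exists a vertex set $S\subset \cX$ with $|S|\le k-1$ such that $G(\cX\setminus S,r)$ is disconnected. I would pick any two distinct connected components $\cY$ and $\cY'$ of $G(\cX\setminus S,r)$. By construction $G(\cY,r)$ and $G(\cY',r)$ are both connected, and $G(\cY\cup\cY',r)$ is disconnected, since no edge joins $\cY$ to $\cY'$ within $G(\cX\setminus S,r)$ and deleting vertices of $S$ does not remove edges internal to $\cY\cup\cY'$.

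To verify the separating-pair condition, consider any $z\in \cX\setminus(\cY\cup\cY')$ lying within distance $r$ of $\cY\cup\cY'$. If $z\notin S$, then $z$ would be a vertex of $G(\cX\setminus S,r)$ adjacent to $\cY$ or $\cY'$, placing it in the component of $\cY$ or of $\cY'$, which is impossible. Hence every such $z$ lies in $S$, giving at most $|S|\le k-1$ of them, as required by the definition of a $(k-1)$-separating pair. For the non-singleton assertion, note that $L_k(\cX)\le r$ means each $x\in\cX$ has at least $k$ other points of $\cX$ within distance $r$, so every vertex has degree at least $k$ in $G(\cX,r)$. If $\cY=\{x\}$ were a singleton component of $G(\cX\setminus S,r)$, then all neighbours of $x$ in $G(\cX,r)$ would have to lie in $S$, forcing $k\le \deg_{G(\cX,r)}(x)\le |S|\le k-1$, a contradiction; the same argument rules out $\cY'$ being a singleton.

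There is essentially one idea and no serious obstacle: the key realisation is that $L_k(\cX)\le r$ translates directly into a minimum-degree-$k$ condition, which is incompatible with a singleton being cut off from $\cX\setminus S$ by the small set $S$ of size at most $k-1$. The only mild care needed is in the verification of the bound on the number of boundary points, which reduces to the observation that anything adjacent across the cut must belong to $S$.
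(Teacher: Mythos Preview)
Your proof is correct and follows exactly the same approach as the paper: first exhibit a $(k-1)$-separating pair for $G(\cX,r)$, then use the minimum-degree-$k$ condition implied by $L_k(\cX)\le r$ to rule out singletons. The only difference is that the paper outsources the existence of the separating pair to \cite[Lemma~13.1]{Pen}, whereas you give the construction directly from a small vertex-cut $S$; your argument is precisely what that cited lemma does.
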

%Let $r>0$.
\begin{proof}
	Since $M_k(\cX) >r$, 
	the graph
	$G(\cX,r)$ is not $k$-connected.
	Therefore
by \cite[Lemma 13.1]{Pen}, 
	it has a $(k-1)$-separating pair
	%but not both.
%When $M_{n,k}>r$,  there exists a $(k-1)$-separating pair 
	$\cY,\cY'\subset\cX$. Since also  $L_{k} (\cX) \le r$,
	every vertex $x\in \cX$ 
has degree at least $k$, which implies that neither $\cY$ nor $\cY'$ can be a singleton.
\end{proof}
Our strategy in this section is to estimate
the probability that there exists a pair of non-singleton
separating sets for $G(\Po_n,r)$ or $G(\cX_n,r)$. We do this in
stages, according to the size of the separating sets.

For $x,y \in A$,
we write $x < y$ if 
$x$ precedes $y$ in the lexicographic ordering.
We 
define the following ordering $\prec $ on $A$, that we shall use repeatedly:
\begin{align}
	x \prec y \Leftrightarrow
	(\dist(x,\partial A) < \dist(y,\partial A))
	{\rm ~or~}
	(\dist(x,\partial A) = \dist(y,\partial A) {\rm ~and ~}
	x < y).
	\label{e:defprec}
%todo: in the lexicographic ordering {\bf define this ordering earlier?}.
\end{align}

\subsection{Small separating sets}

The goal of this section is to prove that for any fixed vertex $x\in A$, the probability that $x$ belongs to a non-singleton $(k-1)$-separating set of 
`small' diameter in $G(\eta_n \cup \{x\}, r)$ is negligible compared to the probability that it has degree at most $k-1$, provided that $x \prec y$ 
for all
other
$y$ in the separating set containing $x$, where  the ordering $\prec$ was
defined at \eqref{e:defprec}.

We introduce further notation. With $k$ fixed,
for $r >0$ and finite $\cX \subset \R^d$, $x \in \R^d$,
let $\cC_{r}(x,\cX)$ denote the collection
of $(k-1)$-separating sets $\cY$ for $G(\cX \cup \{x\},r)$ 
containing $x$ such that moreover $x \prec y$ for all
$y \in \cY \setminus \{x\}$. Given
also $\rho >0$, we are interested in the event 
\begin{align}
	\label{e:defE}
E_{x,\rho,r}(\cX) 
:= \{\exists \cY\in\cC_{r}(x,\cX), 0<\diam(\cY) \leq \rho r
	\}.
\end{align}

\begin{lemma} \label{l:smallK}
	(i) Suppose $d \geq 2$ and $A$ has $C^2$ boundary.
	Then there exist 
	$\delta, r_0\in (0,1)$ and 	$c <\infty$ such that for all 
	$n\ge k+2$,  any $x\in A$ 
	and any $r \in (0,r_0)$ we have
	\begin{align}\label{e:Zub}
		\PP[E_{x,\delta,r}(\eta_n)] \leq cp_{n,r}(x) (nr^d)^{1-d}; \\
		\label{e:Zbinub}
		\PP[E_{x,\delta,r}(\cX_{n-1})] \leq cp_{n,r}(x) 
		(nr^d)^{1-d},
	\end{align}
	where $p_{n,r}(x)$ was defined at \eqref{e:pnj}
	
	(ii) Suppose $d=2$ and $A$ is polygonal. 
	Then there exist $K \in (0,\infty)$, 
	and $\delta, r_0 \in (0,1)$ and $c <\infty$ such that 
	for all $n \geq 3$, $x\in A \setminus \Cor^{(Kr)}$, and $r \in (0,r_0)$,
	\eqref{e:Zub} and \eqref{e:Zbinub} hold.
\end{lemma}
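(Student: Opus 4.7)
The plan is to apply the multivariate Mecke formula, decomposing the event $E_{x,\delta,r}(\eta_n)$ over the cardinality $m+1=|\cY|$ of the separating set $\cY$ containing $x$, and to extract from the hypothesis $\diam(\cY)\le\delta r$ combined with the ordering $x\prec y$ on the other vertices an extra exponential factor in the probability that $\eta_n$ leaves at most $k-1$ points in the enlarged region $\Omega:=(\cY\oplus B_r(o))\cap A$. Since $\delta<1$ makes $G(\cY,r)$ automatically complete, the graph-connectedness requirement on $\cY$ is free. For fixed $m\ge 1$, writing $\cY=\{x,y_1,\ldots,y_m\}$, the $m$-th order Mecke formula bounds the contribution from configurations with $|\cY|=m+1$ by
\begin{equation*}
\frac{n^m}{m!}\int_{A^m}\mathbf{1}\{y_j\in B(x,\delta r),\ x\prec y_j\,\forall j\}\,\PP[\eta_n(\Omega)\le k-1]\,\nu^m(d\vec y).
\end{equation*}

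The crucial geometric input is that $x\prec y_j$ forces $x$ to be the element of $\cY$ closest to $\partial A$. Letting $y^\ast$ realize $\max_j\|y_j-x\|$, Lemma \ref{lemgeom1a}-(ii) in the $C^2$ case (or Lemma \ref{l:poly_diff_ball} in the polygonal case, which is precisely why $x\in A\setminus\Cor^{(Kr)}$ must be assumed in part (ii)) applied to the pair $(x,y^\ast)$ supplies the linear-in-diameter lower bound $\nu(\Omega)\ge\nu(B_r(x))+c_0\,r^{d-1}\,\|y^\ast-x\|$ for some $c_0>0$. Combining this with the Poisson tail $\PP[\eta_n(\Omega)\le k-1]\le C(nr^d)^{k-1}e^{-n\nu(\Omega)}$ and the matching lower bound $p_{n,r}(x)\ge c(nr^d)^{k-1}e^{-n\nu(B_r(x))}$ (valid once $nr^d$ is bounded below; smaller $nr^d$ is absorbed by the trivial estimate $\PP\le 1$) gives $\PP[\eta_n(\Omega)\le k-1]\le Cp_{n,r}(x)\exp(-c_0 nr^{d-1}\max_j\|y_j-x\|)$.

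Designating which index realizes the maximum (a factor $m$), confining each remaining $y_j$ to the ball $B(x,\|y^\ast-x\|)$ (factor $(\theta_d\|y^\ast-x\|^d)^{m-1}$), and evaluating the surviving radial integral via $\Gamma(dm)$ after the rescaling $u=c_0 nr^{d-1}t$ shows that the contribution with $|\cY|=m+1$ is at most $p_{n,r}(x)\cdot C^m((dm-1)!/(m-1)!)(nr^d)^{-m(d-1)}$. The $m=1$ term alone is precisely $Cp_{n,r}(x)(nr^d)^{1-d}$, the target bound.

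The main obstacle is summing the Mecke bound over $m\ge 2$, where the combinatorial factor $(dm-1)!/(m-1)!\sim(dm)^{(d-1)m}$ eventually overwhelms the geometric gain $(nr^d)^{-m(d-1)}$. For $m$ up to a polynomial threshold in $nr^d$ the Mecke contributions decay geometrically in $m$ and are dominated by the $m=1$ term; for larger $m$ one discards Mecke and uses the direct estimate $\PP[|\eta_n\cap B(x,\delta r)|\ge m+1]\le(Cnr^d)^m/m!$, which decays super-exponentially once $m\gg nr^d$. A convenient alternative for intermediate $m$: if $\diam(\cY)$ exceeds a fixed fraction of $r$, Lemma \ref{l:Med2} replaces the linear volume gain by $\Theta(r^d)$, yielding a factor $\exp(-c_1 nr^d)$ that easily absorbs any combinatorial prefactor. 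Together these make $\PP[E_{x,\delta,r}(\eta_n)]$ a convergent series dominated by the $m=1$ contribution, proving \eqref{e:Zub}. The estimate \eqref{e:Zbinub} follows by the same argument applied to the binomial Palm formula for $\cX_{n-1}$, with $(n-1)\cdots(n-m)\le n^m$ in place of $n^m$ and standard Poisson-binomial comparison for the count probability.
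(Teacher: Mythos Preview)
Your multivariate Mecke decomposition over $|\cY|=m+1$ creates a summation problem that your workaround does not close. The direct tail estimate $\PP[|\eta_n\cap B(x,\delta r)|\ge m+1]\le(C\delta^d nr^d)^m/m!$ carries no factor $e^{-n\nu(B_r(x))}$: it controls only the count in the small ball $B(x,\delta r)$, not the (independent and much more restrictive) event that $B_r(x)\setminus B(x,\delta r)$ contains at most $k-1$ Poisson points, which is what produces $p_{n,r}(x)$. Since the target $p_{n,r}(x)(nr^d)^{1-d}$ is of order $e^{-\Theta(nr^d)}$ while your tail bound is only $e^{-\Theta(\delta^d nr^d)}$, no threshold $m_0$ makes both ranges fit. (One can repair this by intersecting with $\{\eta_n(B_r(x)\setminus B(x,\delta r))\le k-1\}$, which the separating-set condition forces, but you do not do so.) Separately, your ``alternative for intermediate $m$'' via Lemma~\ref{l:Med2} is inapplicable: that lemma needs $\diam(\cY)\ge\eps r$ for a fixed $\eps>0$, whereas $E_{x,\delta,r}$ is precisely the regime $\diam(\cY)\le\delta r$ with $\delta$ small.

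The paper sidesteps the whole sum over $m$. If $E_{x,\delta,r}(\eta_n)$ occurs with separating set $\cY$, let $y\in\cY\cap\eta_n$ be a point of $\cY$ furthest from $x$. Then $\cY\subset B_{\|y-x\|}(x)$, so the region $D_{x,y}:=(B_r(x)\cup B_r(y))\setminus B_{\|y-x\|}(x)$ is disjoint from $\cY$ but contained in $\cY\oplus B_r(o)$; the separating condition therefore forces $\eta_n(D_{x,y})\le k-1$. A \emph{single} first-order Mecke integral over $y\in B(x,\delta r)$ now suffices. Lemma~\ref{lemgeom1a}(ii) (or Lemma~\ref{l:poly_diff_ball} in the polygonal case) gives $\nu(D_{x,y})\ge\nu(B_r(x))+c_0 r^{d-1}\|y-x\|-\fmax\theta_d\|y-x\|^d$, and choosing $\delta$ small makes the last term negligible. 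The resulting radial integral $\int_0^{\delta r}e^{-c_0 nr^{d-1}t}t^{d-1}\,dt=O((nr^{d-1})^{-d})$ produces exactly one factor $(nr^d)^{1-d}$, with no series to sum and no threshold to tune.
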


\begin{proof}
	(i) Let $\delta \in (0,1)$. Suppose that $E_{x,\delta,r}(\eta_n)$
	occurs with some $\cY$. Then by considering the vertex furthest from 
	$x$ in $\cY$, we see that there exists $y\in \eta_n$ such that
	$\cY\subset B_{\|y-x\|}(x)$ and $\|y-x\|\le \delta r$.
	Moreover, setting $D_{x,y} := (B_r(x)\cup B_r(y))\setminus
	B_{\|y-x\|}(x)$ we have that $\eta_n ( D_{x,y}) \leq k-1$.
	By Markov's inequality,
	$\PP[E_{x,\delta,r}(\eta_n) ]$ is bounded above by the expected number
	of $y\in \eta_n \cap B_{\delta r}(x)$ satisfying
	$\Po_n(D_{x,y}) \leq k-1$,
%	the aforementioned properties,
	and hence by
	the Mecke formula
	\begin{align}
		\PP[E_{x,\delta,r}(\Po_n)] \leq
		n \int_{B(x,\delta r)} \PP[\Po_n(D_{x,y}) \leq k-1] \nu(dy).
		\label{e:0926a}
	\end{align}
	
	To proceed, we need to bound the volume of $D_{x,y}$ from below.
	By Lemma \ref{lemgeom1a}-(ii), there exists
	$r_0>0$ such that for all $r \in (0,r_0 )$ and $x, y\in A$ with $\|x-y\|\le r$
	and $x\prec y$, setting $\kappa_d := 2^{-3d-1} \theta_{d-1}$
	we have
	$$
	\nu(B_r(x) \cup B_r(y) ) \ge \nu(B_r(x)) +
	2 \kappa_d f_0 r^{d-1}\|y-x\|.
	$$
	Hence, for $r < r_0$,  for $x,y \in A$ with  $\|y-x\|\le \delta r $ and 
	$x\prec y$,
	\begin{align*}
	\nu(D_{x,y})
	\geq \nu(B_r(x)) +  2 \kappa_d  f_0 r^{d-1} \|y-x\|
	- \fmax \theta_d \|y-x\|^d. 
	\end{align*}
	Now provided $\delta \leq (\kappa_df_0/(\fmax \theta_d))^{1/(d-1)}$,
	we have  $\fmax \theta_d \|y-x\|^d\le \kappa_d  f_0 r^{d-1} \|y-x\|$, yielding 
	\begin{equation}
		\label{e:nulb}
			\nu(D_{x,y})
			\geq \nu(B_r(x)) +  \kappa_d  f_0 r^{d-1} \|y-x\|.
	\end{equation}
	By \eqref{e:ballbds},
	there is also a bound the other way, namely $\nu(D_{x,y}) \leq
	\nu(B_r(x) \cup B_r(y))
	\leq c_0 \nu(B_r(x))$ for some constant $c_0
	\in [1,\infty)$.  
	Using \eqref{e:0926a}
	 and the preceding upper and lower bounds on
	 $\nu(D_{x,y})$,
	 we have
	\begin{align}\label{e:Exrho}
		\PP[E_{x,\delta,r}(\eta_n)]\le c_0^{k-1} 
		n 
		\int_{B(x,\delta r)} 
		\sum_{j=0}^{k-1} 
		((n\nu(B_r(x)))^j/j!)
		e^{-n \nu(B_r(x)) - n r^{d-1} f_0 \kappa_d \|y-x\|}
		\nu(dy).
	\end{align}
	Recall that $p_{n,r}(x)=\sum_{j=0}^{k-1}
	((n \nu(B_r(x)))^j/j!) e^{-n \nu(B_r(x))}$.
	Changing variable to $y'=y-x$, and then to 
	$z= nr^{d-1}y'$ leads to 
	\begin{align*}
		\PP[E_{x,\delta,r}(\eta_n)] 
		&\le  c_0^{k-1} f_{\max} n p_{n,r}(x)
		\int_{\|y'\|\le \delta r} e^{-n f_0 \kappa_d  r^{d-1} \|y'\|}
		dy'
		\\
		&\le  c' p_{n,r}(x) n (nr^{d-1})^{-d} \int_{\RR^d} 
		e^{- f_0 \kappa_d \|z\|} dz,
	\end{align*}
	for a suitable  positive constant $c'$, not
	depending on $r$ or $n$.
	This proves \eqref{e:Zub}.
	
	To prove \eqref{e:Zbinub}, we
	use similar reasoning to before, now
	using the union bound (instead of the Mecke formula) and the
	binomial distribution, to obtain that
	\begin{align*}
		\PP[E_{x,\delta,r}(\cX_{n-1}) ]
		\leq (n-1) 
		\int_{B(x,\delta r)}
		\sum_{j=0}^{k-1} 
		\binom{n-2}{j} 
		\nu(D_{x,y})^j(1-\nu(D_{x,y}))^{n-2-j}
		\nu(dy).
	\end{align*}
	As before we bound $\nu(D_{x,y})^j $ from above by
	$c_0^{k-1}\nu(B_r(x))^{j-1}$. Provided $r$ is sufficiently small,
	we have for all $x,y$ and all $ j \leq k-1$
	that $(1-\nu(D_{x,y}))^{-2-j} \leq 2$. Also we can bound the
	binomial coefficient from above by $n^j/j!$. Combining all of
	these and also using the bound $(1-t)\leq e^{-t} $ we obtain that
	\begin{align*}
		\PP[E_{x,\delta ,r}(\cX_{n-1}) ]
		\leq  2c_0^{k-1} n \int_{B(x,\delta r)}
		\sum_{j=0}^{k-1} \frac{(n \nu(B_r(x)))^j}{j!}
		\exp(-n \nu(D_{x,y})) \nu(dy);
	\end{align*}
	then using \eqref{e:nulb} and arguing similarly
	to the Poisson case, we obtain \eqref{e:Zbinub}.
	
	(ii) Suppose $d=2$ and $A$ is polygonal. We use Lemma \ref{l:poly_diff_ball} in place of Lemma \ref{lemgeom1a} to get lower bound \eqref{e:nulb}. This together with the simple upper bound $\nu(D_{x,y})\le c_0\nu(B_r(x))$ and the same reasoning as in part (i) lead to 	\eqref{e:Zub} and \eqref{e:Zbinub}  in this case too.
\end{proof}

\subsection{Medium sized separating sets}

\newcommand{\medevent}{F_{x,\eps,\rho,r}}

Recall the definition of $\cC_{r}(x,\cX)$ before the previous lemma, and
$p_{n,r} (x)$ at \eqref{e:pnj}.
Given $\eps, \rho$ with $ 0 < \eps < \rho < \infty$,
define the event
\begin{align}
	\label{e:defF}
\medevent(\Po_n) :=
	\{\exists \cY\in\cC_{r}(x,\eta_n), ~ \eps r<\diam(\cY)<\rho r
	\}.
\end{align}
The next lemma helps us
bound the probability of having a medium-sized separating set. 

\begin{lemma} \label{l:medK}
	(i) Suppose $ \partial A \in C^2$.
	Given $\rho, \eps \in \R$ with $0<\eps <\rho$,
	there exist $\delta, r_0, c >0$ such that for all $n \geq 2+k$,
	$r \in (0,r_0)$
	and  all $x\in A$, we have
	\begin{align}
		\PP[\medevent(\Po_n)] \leq c p_{n,r}(x) e^{-\delta nr^d};
		\label{e:Zmed}\\
		\PP[\medevent(\cX_{n-1})] \leq cp_{n,r}(x)
		e^{-\delta nr^d}.
		\label{e:Zbimed}
	\end{align}
	
	(ii) Suppose $d=2$ and $A$ is polygonal. Given $0 < \eps  < \rho <
	\infty$, there exists $K \in (0,\infty)$ and $\delta, r_0  >0$ such
	that for
	all $n \geq 3$, $ r \in (0,r_0)$
	and all $x \in A \setminus \Cor^{(Kr)}$, we have
	\eqref{e:Zmed}
	and \eqref{e:Zbimed}.
\end{lemma}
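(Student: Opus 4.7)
The plan is to adapt the farthest-point Mecke argument from Lemma \ref{l:smallK} by replacing its small-diameter geometric input with the stronger separation supplied by Lemma \ref{l:Med2}, and to enumerate $(k-1)$-separating sets $\cY$ according to their cardinality.

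First, I would apply the multivariate Mecke equation to bound $\PP[\medevent(\Po_n)]$ by the sum over $j \geq 2$ of the expected number of ordered tuples $(y_1,\dots,y_{j-1})$ of distinct points of $\Po_n$ such that $\cY := \{x,y_1,\dots,y_{j-1}\} \in \cC_r(x,\Po_n)$ and $\eps r < \diam(\cY) < \rho r$; in particular $\cY$ is connected in $G(\cdot,r)$ and $\Po_n(D\setminus\cY) \leq k-1$, where $D := (\cY \oplus B_r(o)) \cap A$. The ordering $\prec$ built into $\cC_r(x,\Po_n)$ forces $x$ to be a closest point of $\cY$ to $\partial A$, so Lemma \ref{l:Med2} applies with $B=\cY$ and $x_0=x$, giving $|D| \geq |B_r(x) \cap A| + 2\delta r^d$ for some $\delta=\delta(d,\rho,\eps)>0$, and hence $\nu(D) \geq \nu(B_r(x)) + 2f_0\delta r^d$ under the WA.

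For a given tuple, using $\nu(D) \leq C' r^d$ with $C'=C'(\rho,d,\fmax)$ together with the separation just obtained, one derives
$$\PP[\Po_n(D\setminus\cY) \leq k-1] = \sum_{i=0}^{k-1}\frac{(n\nu(D))^i}{i!}\,e^{-n\nu(D)} \leq c_1\, p_{n,r}(x)\, e^{-2f_0\delta n r^d}.$$
Combined with a configurational integral bound exploiting the diameter constraint $\cY \subset B_{\rho r}(x)$ and the connectedness of $\cY$ (via a spanning-tree enumeration in which each new vertex lies within distance $r$ of an already-placed one), and dividing by $(j-1)!$ to pass from ordered tuples to unordered sets, one obtains an expected-count bound of shape $c_1 p_{n,r}(x)\, e^{-2f_0\delta n r^d} \cdot (Cnr^d)^{j-1}/(j-1)!$. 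Summation over $j \geq 2$ then yields \eqref{e:Zmed}. The binomial counterpart \eqref{e:Zbimed} follows by replacing Mecke with the union bound and using $1-t \leq e^{-t}$ to convert the binomial tails, exactly as in the binomial part of the proof of Lemma \ref{l:smallK}. For polygonal $A$ in $d=2$, restricting $x \in A \setminus \Cor^{(Kr)}$ for suitable $K$ and invoking Lemma \ref{l:poly_diff_ball} in place of Lemma \ref{lemgeom1a}-(ii) accommodates the corners.

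The main obstacle will be matching the summation over $j$ against the exponential decay: a crude bound on the configurational integral of the form $(\fmax \theta_d \rho^d nr^d)^{j-1}$ would produce a factor $\exp(C_\rho n r^d)$ after summing, which could overwhelm the decay $\exp(-2 f_0\delta n r^d)$ supplied by Lemma \ref{l:Med2}. Closing the gap will require using connectedness tightly, so that only neighbours of already-placed vertices contribute a factor of order $r^d$ at each step, and likely splitting the range of $\|y_i - x\|$ into sub-intervals so that the lower bound on $\nu(D)$ can be upgraded when $\cY$ is more spread out (a refinement already visible in the constructions of $H^+$, $H^-$, $Q^\pm$, $S$ in the proof of Lemma \ref{l:Med2}). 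The short-range sub-interval should overlap with the regime covered by Lemma \ref{l:smallK}, so its estimate can be reused there, leaving only the bulk to be handled via Lemma \ref{l:Med2}.
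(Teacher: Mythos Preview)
Your diagnosis of the obstacle is correct, and it is fatal to the enumeration-over-cardinality approach. The decay $e^{-2f_0\delta nr^d}$ supplied by Lemma \ref{l:Med2} has $\delta=\delta(d,\rho,\eps)$ fixed once $\eps,\rho$ are, whereas the configurational sum over $j$ contributes (even with a sharp spanning-tree bound exploiting connectedness) a factor of order $e^{Cnr^d}$ with $C$ comparable to $\theta_d\fmax$, independent of $\eps$. There is no mechanism forcing $2f_0\delta > C$; indeed $\delta$ is small when $\eps$ is small. The refinements you propose do not change this balance: connectedness already restricts each new vertex to an $r$-ball, and splitting the diameter range just replaces one fixed $\delta$ by a minimum of finitely many. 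Nor can one upgrade $\delta$ for ``more spread out'' $\cY$ within the range $(\eps r,\rho r)$: Lemma \ref{l:Med2} already extracts what that range gives.

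The paper avoids the summation over $|\cY|$ altogether via a \emph{discretization} (lattice-animal) argument. One partitions $\RR^d$ into cubes of side $\delta_2 r$, with $\delta_2$ chosen so small that $|B_1(o)\setminus B_{1-\sqrt{d}\delta_2}(o)|$ is at most the $\delta_1$ from Lemma \ref{l:Med2}. Since $\diam\cY\leq\rho r$, the union $\sigma$ of cubes meeting $\cY$ lies in a fixed ball and can assume only a \emph{bounded} number $c=c(d,\rho,\delta_2)$ of shapes, uniformly in $n,r$. For each fixed shape $\sigma$ one sets $\sigma^*:=\{z\in\sigma:x\prec z\}\cup\{x\}$ and $H:=(\sigma^*\oplus B_{(1-\sqrt{d}\delta_2)r}(o))\setminus\sigma^*$; one checks $H\subset\cY\oplus B_r(o)$ and $H\cap\cY=\emptyset$, whence $\Po_n(H)\leq k-1$ on the event. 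Lemma \ref{l:Med2} (applied to $\sigma^*$, with the thin-shell correction absorbed by the choice of $\delta_2$) yields $\nu(H)\geq\nu(B_r(x))+\delta_5 r^d$, giving $\PP[\Po_n(H)\leq k-1]\leq c_1^{k-1}p_{n,r}(x)e^{-\delta_5 nr^d}$. A \emph{finite} union bound over the $c$ shapes finishes; the cardinality of $\cY$ never appears.

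For part (ii) the relevant reduction is not Lemma \ref{l:poly_diff_ball} (that is the input for Lemma \ref{l:smallK}) but rather choosing $K$ so large that $B_{(\rho+9)r}(x)$ meets at most one edge of $A$ whenever $x\in A\setminus\Cor^{(Kr)}$; the animal argument of part (i) then carries over unchanged, since the geometry near a single straight edge is no worse than near a $C^2$ boundary for the purposes of Lemma \ref{l:Med2}.
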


\begin{proof}
	Later in the proof we shall use the fact that since
	we assume $A$ is compact and $f$ is continuous on $A$ with $f_0>0$,
	\bea
	\lim_{s \downarrow 0} \big(
	\sup\{f(y)/f(x): x,y \in A, \|y-x\|\leq s\} \big)
	=1.
	\label{0703c}
	\eea
	
	(i) Suppose $\partial A \in C^2$.
	Without loss of generality, 
	we can and do assume $\eps <1$.  
	Let $\delta_1 := \delta(d,\rho,\eps )$ be as in Lemma
	\ref{l:Med2}.
	With $e_1$ denoting an arbitrary unit vector in $\R^d$,
	choose 
	$\delta_2  \in (0, 1/(99 \sqrt{d}))$
	such that
	\begin{align}
		|B_1(o)\setminus B_{1-\sqrt{d}\delta_2  }(o)|\le \delta_1.
		%~~~~~~~~~~~~~~
		%|B_1(o)\setminus B_1(2\sqrt{d}\delta_2   e_1)|\le \delta_1/16.
		\label{e:del2}
	\end{align}
	Partition $\RR^d$ into cubes of side length $\delta_2   r$.  Given $\cY \in \cC_{r}(x,\Po_n)$, 
	denote by $\A_{\delta_2}
	(\cY)$ the closure of the union of all the cubes in
	the partition that intersect $\cY$. Here
	$\A$ stands for ``animal'' and is unrelated to our
	underlying domain $A$.
	If
	$\diam \cY \in (\eps  r, \rho r]$, then
	$\A_{\delta_2}(\cY)  \subset B(x, \rho r + \delta_2 d^{1/2} r)$ and $\A_{\delta_2}(\cY) $
	can take at most
	$c := 2^{{(2\lceil (\rho /{\delta_2})  +\sqrt{d}\rceil
	)^d}}$ different possible shapes. 

	If the event  $\medevent(\Po_n)$ occurs there is at least one
	set $\cY \in \cC_{r}(x,\Po_n)$ with $\eps  r < \diam \cY
	\le \rho r$. If there are several such sets $\cY$, choose one
	of these according to some deterministic rule, and
	denote it by $\cY^*(\Po_n)$.
	
	Fix a possible shape $\sigma$ that might arise
	as $\A_{\delta_2}(\cY)$ for some $\cY \in \cC_{r}(x,\Po_n)$
	with $\diam \cY \in (\eps r,\rho r]$,
	and suppose
	the event $\medevent(\Po_n)\cap \{\A_{\delta_2}(\cY^*(\Po_n))
	= \sigma\}$ occurs. 
	Let $\sigma^*:= \{z\in \sigma: x\prec z \} \cup \{x\}$. 
	Set $H:=H(\sigma)= (\sigma^* \oplus 
	B_{(1-\sqrt{d}{\delta_2}) r}(o)) \setminus \sigma^*$. By the
	triangle inequality, $H\subset \cY^*(\Po_n)
	\oplus B_r(o)$. We claim that $\eta_n(H)\le k-1$. 
	Indeed, if there are $k$ or more points in $\eta_n\cap H$, then since
	$\cY^*(\Po_n) $ is $(k-1)$-separating,
	necessarily one of these points, denoted by $y$, belongs to
	$\cY^*(\Po_n) $.
	Hence  $y\in \eta_n\cap H\cap \cY^*(\Po_n)$, implying $y\in \sigma$ and therefore $y\in \sigma \setminus \sigma^*$ (since $y\in H$), but this would contradict the assumption that $x\prec y$ for all $y\in\cY^*(\Po_n) \setminus \{x\}$.

	Now we estimate from below the volume of $H \cap A$. 
	Recall that $\delta_1 = \delta(d,\rho,\eps)$ is as
	in Lemma \ref{l:Med2}. 
	Applying \eqref{e:MedLB2} from there
	leads to 
	\begin{align*}
		|H\cap A| \ge |B_{r(1-\sqrt{d}{\delta_2})}(x)\cap A| + 
		2 \delta_1 r^d. 
	\end{align*}
	By \eqref{e:del2}, $|(B_r(x) \setminus B_{r(1- \sqrt{d}\delta_2)}(x))
	\cap A | \leq \delta_1 r^d$ and hence
	$$
	|B_{r(1-\sqrt{d}{\delta_2})}(x)\cap A| 
	\ge |B_r(x)\cap A|- \delta_1 r^d.
	$$
	Let $\delta_3 \in (0,1/2)$ be such  that
	$ \delta_4:= (1-2\delta_3)(1 + \delta_1/(\fmax \theta_d)) - 1 >0 $.
	By the preceding estimates,
	and \eqref{0703c}, provided $r$ is small we have
	that
	\begin{align*}
		\nu(H) & \geq (1-\delta_3) f(x) \left(
		|B_r(x) \cap A| +  \delta_1r^d \right)
		\\
		& \geq (1-2 \delta_3) \nu(B_r(x)) 
		\left( 1 + \frac{  \delta_1 r^d}{ \fmax
			\theta_d r^d
		} \right) = (1+ \delta_4) \nu(B_r(x)).
	\end{align*}
	Let $\delta_5 = \delta_0 \delta_4$, with $\delta_0$ 
	given at \eqref{e:ballbds}.
	Then
	\begin{align}
		\label{e:0811-3}
		\nu(H) \ge
		\nu(B_r(x)) + \delta_5  r^d. 
	\end{align}
	Also, because of the upper bound on diameters and
	\eqref{e:ballbds},
	there is a constant $c_1 \in [1,\infty)$ such that
	$\nu(H) \leq c_1 \nu(B_r(x))$ uniformly over all possible
	$x$,  all small $r$, and all possible $\sigma$.

	Using these upper and lower bounds on $\nu(H)$, 
we can deduce that
	\begin{align*}
		\PP[\medevent(\Po_n)  \cap \{\A_{\delta_2}
		(\cY^*(\Po_n)) = \sigma\}]
		& \leq \PP[\Po_n(H) \leq k-1] \\
		& \leq c_1^{k-1} \sum_{j=0}^{k-1} 
		((n\nu(B_r(x)))^j/j!)
		e^{- n \nu(B_r(x)) - n \delta_5 r^d}
		\\
		& = c_1^{k-1} p_{n,r}(x) e^{-n \delta_5 r^d}.
	\end{align*}
	This, together with the union bound over the choice of possible shapes $\sigma$, gives us \eqref{e:Zmed}.
	
	To prove the result \eqref{e:Zbimed} for the binomial case
	we use
	the volume estimates \eqref{e:0811-3} and $\nu(H) \leq c_1 \nu(B_r(x))$
	once more. For $n$ large, we have
	\begin{align*}
		\PP[ \medevent(\cX_{n-1}) \cap \{ \A_{\delta_2}
		(\cY^*(\cX_{n-1})) 
		= \sigma\}] & \leq 
		\PP[\cX_{n-1}(H) \leq k-1]
		\\
		& = \sum_{j=0}^{k-1} \binom{n-1}{j} \nu(H)^j
		(1-\nu(H))^{n-1-j}
		\\
		& \leq 2 c_1^{k-1} \sum_{j=0}^{k-1}
		(n^j /j!) \nu(B_r(x))^j \exp(- n \nu(H))
		\\
		& \leq
		2  c_1^{k-1} p_{n,r}(x) e^{-n \delta_5 r^d},
	\end{align*}
	and hence \eqref{e:Zbimed}.
	
	(ii) Suppose $d=2$ and $A$ is polygonal. Let $0 < \eps  < \rho < \infty$.
	Choose $K$ such that for
	all $r$ and all $x \in A \setminus \Cor^{(Kr)}$, the ball $B_{(\rho +9)r}(x)$
	intersects at most one edge of $A$. We can choose such a $K$ by a similar argument to the proof of Lemma \ref{l:poly_diff_ball}.
	Then for $x \in A \setminus 
	\Cor^{(Kr)}$
	we can deduce \eqref{e:Zmed} and \eqref{e:Zbimed}
	in the same manner as in the proof of part (i).
\end{proof}

Next we consider the probability of having a small or medium-sized separating set near the corner of a polygon in dimension 2. 
We do not attempt to compare this probability with $p_{n,k-1}(x)$ because the corners have negligible area and we can get by with a less precise estimate. 

\begin{lemma}
	\label{l:polycor}
	Suppose that $d=2$ and $A$ is a convex polygon. 
	Given $\rho, K \in (0,\infty)$,
	 there exist constants  $c, \delta, r_0 \in (0,\infty)$
	depending only on $A$, $f_0$, $\rho$ and $K$, such that
	if $n \geq k+2$ 
	then
	\begin{align}
		\sup_{x \in \Cor^{(Kr)}, r \in (0,r_0)}
		\PP[\exists \cY\in\cC_{r}(x,\eta_n), \diam(\cY)\le \rho r ]
		\leq c \exp(-\delta n r^2);
		\label{e:CorMed}
		\\
		\sup_{x \in \Cor^{(Kr)}, r \in (0,r_0)}
		\PP[\exists \cY\in\cC_{r}(x,\cX_{n-1}), \diam(\cY)\le \rho r ]
		\leq c \exp(-\delta n r^2).
		\label{e:CorMedBin}
	\end{align} 
\end{lemma}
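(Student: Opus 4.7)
The plan is to adapt the shape-discretization argument from the proof of Lemma \ref{l:medK}(ii) to the corner case, noting that Lemma \ref{l:polycor} only needs the crude exponential bound $c\exp(-\delta nr^2)$ rather than a sharp comparison with $p_{n,r}(x)$, which permits a correspondingly crude lower bound on the relevant ``separating annulus'' volume.

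First I would observe that since $A$ is a convex polygon with minimum interior angle $\alpha_{\min}\in (0,\pi)$, there exist constants $c_A > 0$ and $r_0 > 0$ such that $|B_r(z)\cap A| \geq c_A r^2$ for all $z\in A$ and $r\in (0,r_0)$; this is the polygonal analogue of Lemma \ref{lemgeom1a}(i), which follows from the local wedge structure at each corner. I would then partition $\R^2$ into cubes of side $\delta_2 r$ for a small $\delta_2\in(0,1)$ to be chosen later. For any $\cY\in\cC_r(x,\eta_n)$ with $\diam(\cY)\leq\rho r$, the animal $\sigma:=\A_{\delta_2}(\cY)$ is contained in $B_{(\rho+\sqrt{2}\delta_2)r}(x)$ and takes one of at most $c_0:=2^{\lceil\pi(\rho+\sqrt{2}\delta_2)^2/\delta_2^2\rceil}$ distinct shapes. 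Fixing such a $\sigma$ (which must contain the cube $C_x$ holding $x$), and setting $\sigma^*:=\{z\in\sigma : x\prec z\}\cup\{x\}$ and $H(\sigma):=(\sigma^*\oplus B_{r'}(o))\setminus\sigma^*$ with $r':=(1-\sqrt{2}\delta_2)r$, the same argument as in Lemma \ref{l:medK} shows that $\eta_n(H(\sigma))\leq k-1$ on the event that some $\cY\in\cC_r(x,\eta_n)$ with $\diam\cY\leq\rho r$ satisfies $\A_{\delta_2}(\cY)=\sigma$, and similarly for $\cX_{n-1}$.

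The crucial geometric step is the uniform lower bound
\[|H(\sigma)\cap A|\geq c_1 r^2\]
for some constant $c_1>0$ depending only on $A,\rho,K,\delta_2$. For this I would pick $r_0$ small enough (and $K$ along the lines of Lemma \ref{l:poly_diff_ball}) that, for $x\in\Cor^{(Kr)}$, the region $B_{(\rho+2)r}(x)\cap A$ coincides with the intersection of $B_{(\rho+2)r}(x)$ with the wedge $W$ of interior angle $\alpha\in[\alpha_{\min},\pi)$ at the nearest corner. Since $x\in\sigma^*$ we have $B_{r'}(x)\subset\sigma^*\oplus B_{r'}(o)$; a wedge-version of the argument in Lemma \ref{l:Med2} (or equivalently a direct perimeter-area computation using that the convexity of $W$ forces the $r'$-thickening of any bounded set inside $W$ to gain area of order $\alpha r^2$) then yields the desired lower bound, provided $\delta_2$ is chosen small enough relative to $\alpha_{\min}$.

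Given this geometric bound, $n\nu(H(\sigma))\geq n f_0 c_1 r^2$, and a standard Poisson tail estimate yields
\[\PP[\eta_n(H(\sigma))\leq k-1]\leq k(nf_0c_1 r^2)^{k-1}\exp(-nf_0c_1 r^2)\leq c'\exp(-\delta n r^2)\]
for any $\delta<f_0 c_1$ (absorbing the polynomial prefactor when $nr^2$ is large, and noting the bound is trivial for $nr^2$ bounded). A union bound over the at most $c_0$ shapes yields \eqref{e:CorMed}, and \eqref{e:CorMedBin} follows by the same argument with the Poisson tail replaced by its binomial counterpart via $(1-t)^{n-1}\leq e^{-(n-1)t}$. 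The main obstacle I expect is establishing $|H(\sigma)\cap A|\geq c_1 r^2$ when $x$ lies exactly on $\partial A$: the ordering $\prec$ then places essentially all interior points of $\sigma$ into $\sigma^*$, so ensuring that the $r'$-thickening still exits $\sigma^*$ with macroscopic area inside $A$ requires careful use of the corner convexity.
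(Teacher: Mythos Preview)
Your approach is viable in outline but takes a harder route than the paper, and the step you flag as the ``main obstacle'' is left open. You carry over the $\sigma^*$/$H(\sigma)$ machinery from Lemma~\ref{l:medK}, but that machinery exists there only because one needs the sharp comparison with $p_{n,r}(x)$; for the crude bound $c\exp(-\delta nr^2)$ it is superfluous, and it is precisely what generates the corner difficulty you identify.

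The paper drops $\sigma^*$ and the ordering $\prec$ entirely. After fixing the animal $\sigma$ (with $\delta_2=1/4$), it orients coordinates so the nearest corner sits at the origin with one edge along the positive $x$-axis, lets $y_{\max}$ be the lexicographic maximum of $\sigma$, and places at $y_{\max}$ a circular sector $S$ of radius $r/2$ and angle $\min(\alpha_{\min},\pi/2)$ opening into the wedge. Then $S^{o}\cap\sigma=\emptyset$ by lex-maximality, $S\subset A$ by the convex wedge geometry, and $S\subset\cY\oplus B_r(o)$ since each cube of $\sigma$ has diameter $\sqrt{2}\,\delta_2 r<r/2$. Because $\cY\subset\sigma$, every point of $\Po_n$ in $S$ lies in $(\cY\oplus B_r(o))\setminus\cY$, so $\Po_n(S)\le k-1$ by the $(k-1)$-separating property, and the lower bound $\nu(S)\ge f_0\cdot\mathrm{const}\cdot r^2$ is immediate from the sector area formula.

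This sidesteps exactly your obstacle: there is no need to show that the $r'$-thickening of $\sigma^*$ escapes $\sigma^*$ with macroscopic area inside the wedge. To close your argument you would in effect have to rediscover this extremal-point-plus-sector construction applied to $\sigma^*$; the paper's version, working with $\sigma$ directly, is strictly more economical.
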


\begin{proof}
	Fix  $\rho,K \in (0,\infty)$. 
	Let $\alpha_{\rm min}$ be the smallest angle of the corners of $A$.
	Assume without loss of generality that one of the corners of $A$ lies at the origin, and moreover one of the edges of $A$ incident to
	the origin is in the direction of the positive $x$-axis, while the other edge is the in the anti-clockwise direction from the positive $x$-axis,
	and therefore lies in the upper  half-plane since $A$ is
	assumed convex.  
	%which can be either in the upper (convex polygon) or lower (non-convex polygon) half-plane.  
%todo: {\bf [Do we use the non-convex case?]}

	Let $\delta_2 :=  1/4$. 
	Define $\A_{\delta_2}(\cY^*(\Po_n))$ as in the proof of
	Lemma \ref{l:medK}.
	As argued there, if there exists 
	$\cY \in \cC_{r}(x,\Po_n)$ with $\diam(\cY) \leq \rho r$,
	then
	$\A_{\delta_2}(\cY^*(\Po_n)) $
	can take at most $c$ different possible shapes for some finite $c$ not depending on $r$.
	
	Suppose $x \in B_{Kr}(o)$.
	Fix a possible shape $\sigma$ that might arise 
	when there exists 
	$\cY \in \cC_{r}(x,\Po_n)$ with $\diam(\cY) \leq \rho r$,
	and suppose the event $\{\A_{\delta_2}
	(\cY^*(\Po_n)) = \sigma\}$ occurs. 
	Let $y_{\rm max}$ be the largest point of $\sigma$
	in the lexicographic ordering (i.e., the highest rightmost point).
	Let $S$ be a sector centred on $y_{\rm max}$
	of radius $r/2$ and with one straight edge from $y$ in
	the direction of the positive $x$-axis, while the other edge is
	in the anti-clockwise direction with 
	angle $ \min(\alpha_{\rm min},\pi/2)$ from the first edge.
	
	Then provided $r$ is small enough, $S \subset A $ and
	the interior of $S$ is disjoint from $\sigma$.
	Also 
	the squares making
	up $\sigma$ have diameter less than $r/2$, so $S$
	is contained in $\Po_n \oplus B_r(o)$; hence
	$\Po_n(S) \leq k-1$.
	Also 
	$
	f_0 \min(\alpha_{\min},\pi/2) r^2/2 \leq \nu(S) \leq
	(\pi/4) \fmax r^2.
	$
	 Therefore
	\begin{align*}
		\PP[\{\exists \cY\in\cC_{r}(x,\eta_n),
		\diam(\cY)\le \rho r\}\cap \{\A_{\delta_2}
		(\cY^*(\Po_n)) = \sigma\}] 
		\leq 
		k (n \fmax (\pi/4) r^2)^{k-1}
		\\
		 \times
	\exp(-n f_0 \min(\alpha_{\min},\pi/2) r^2/2). 
	\end{align*}
	Summing over all possible $\sigma$ and treating other corners
	similarly, we obtain (\ref{e:CorMed}).
	Also
	\begin{align*}
		\PP[\{\exists \cY\in\cC_{r}(x,\cX_{n-1}), 
		\diam(\cY)\le \rho r\}& \cap \{\A_\delta (\cY^*(\cX_{n-1})) =
		\sigma\}] 
		\\
		& \leq \sum_{j=0}^{k-1} 
		\binom{n-1}{j} \nu(S)^j(1-\nu(S))^{n-1-j}
		\\
		& \leq \sum_{j=0}^n 2((n \nu(S))^j/j!)\exp(-n \nu(S)),
	\end{align*}
	and using the same  upper and lower bounds on $\nu(S)$
	as before gives us \eqref{e:CorMedBin}.
\end{proof}

\subsection{Large separating pairs}

Given $r>0$, $\rho >0$,
recall  that if $L_{n,k}\le r< M_{n,k}$ then
there is a $(k-1)$-separating pair for $G(\Po_n,r)$,
and each individual set in the pair is non-singleton.
Then, 
either there exists a non-singleton $(k-1)$-separating set with diameter at most $\rho r$, or both sets in the pair have diameter greater than
$\rho r$.
Our next lemma deals with the latter possibility. 
Given $\rho >0,r\geq 0$, define the event
$$
H_{r,\rho}(\cX) =
\{ \exists \text{ a } (k-1)\text{-separating pair } \cY, \cY'
\text{ for } G(\cX,r), \min(\diam(\cY), \diam(\cY'))> \rho r \}.
$$

\begin{lemma}\label{l:2large}
	Suppose $(r_n)_{n >0}$
	satisfies \eqref{e:Econv} for some $\beta' \in (0,\infty)$.
	Then there exists $\rho \in (0,\infty)$
	such that 
	$\PP[H_{r_n,\rho}(\eta_n)] = O(n^{-2})$ and 
	$\PP[H_{r_n,\rho}(\cX_n)] = O(n^{-2})$
	as $n\to \infty$.
%todo:	{\bf [Maybe only need binomial version of this but this might
%	need a lot of checking]}
\end{lemma}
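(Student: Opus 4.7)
The plan is a Peierls-type contour argument relying on Lemma~\ref{l:rlog}, which gives $nr_n^d=\Theta(\log n)$ for large $n$. First I reduce to the existence of a single $(k-1)$-separating set of large diameter: if $(\cY,\cY')$ is a $(k-1)$-separating pair witnessing $H_{r_n,\rho}(\eta_n)$, then $\cY$ is itself a $(k-1)$-separating set (in the sense defined just before Lemma~\ref{l:LrM}) with $\diam(\cY)>\rho r_n$, because $G(\cY\cup\cY',r_n)$ being disconnected forces $\Delta \cY$ to lie in the set of at most $k-1$ separating vertices. So it suffices to bound the probability that $G(\eta_n,r_n)$ admits a $(k-1)$-separating set of diameter $>\rho r_n$, and similarly for $\cX_n$.

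Next, tile $\R^d$ with axis-aligned cubes of side $\eps r_n$, with $\eps=1/(3\sqrt d)$, so that any two points in $*$-adjacent cubes lie within distance $r_n$. For such a $\cY$, the footprint polycube $\sigma(\cY)$ of cubes meeting $\cY$ is $*$-connected with diameter $>\rho r_n$, and by the choice of $\eps$ any Poisson point in a cube $*$-adjacent to $\sigma(\cY)$ but not contained in it lies within $r_n$ of $\cY$, hence belongs to $\Delta\cY$. Consequently the ``cube shell'' $\partial^*\sigma(\cY)$ contains at most $k-1$ Poisson points. Let $\Gamma=\Gamma(\sigma(\cY))$ be the associated contour, the set of $(d-1)$-faces shared between cubes of $\sigma(\cY)$ and cubes of $\partial^*\sigma(\cY)$ lying in $A$. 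A projection argument along the diameter direction of $\sigma(\cY)$ (using Lemmas~\ref{lemgeom3} and \ref{lemgeom1a} to control contributions near $\partial A$ in the $C^2$ case, and Lemma~\ref{l:poly_diff_ball} together with the convex structure in the polygonal case) shows that $\Gamma$ consists of $\ell\ge c\rho$ elementary $(d-1)$-faces, and the shell $\partial^*\sigma(\cY)\cap A$ has $d$-volume at least $c'\ell r_n^d$.

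Combined with Lemma~\ref{l:rlog}, the Poisson mean of $\eta_n$ on $\partial^*\sigma(\cY)$ is at least $c_1\ell\log n$, and a standard Poisson tail bound gives $\PP[\eta_n(\partial^*\sigma(\cY))\le k-1]\le n^{-c_2\ell}$. By Peierls-type enumeration (classical lattice contour counting for $d=2$, plaquette-surface counts of Kesten--Pisztora type for $d\ge 3$), the number of admissible contours of $\ell$ faces rooted at a given face is at most $\gamma_d^\ell$, while the number of root faces is $O(r_n^{-d})=O(n/\log n)$. A union bound then yields
\begin{align*}
\PP[H_{r_n,\rho}(\eta_n)] \le \frac{Cn}{\log n}\sum_{\ell\ge c\rho}\gamma_d^\ell\, n^{-c_2\ell},
\end{align*}
which, for $n$ large enough that $n^{c_2}>2\gamma_d$, is a geometric sum dominated by its first term, giving $\PP[H_{r_n,\rho}(\eta_n)]=O(n^{1-cc_2\rho}/\log n) = O(n^{-2})$ once $\rho$ is taken sufficiently large. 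The argument for $\cX_n$ proceeds identically with binomial concentration replacing the Poisson tail, or alternatively by a de-Poissonisation coupling.

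The principal technical obstacle is the plaquette-surface enumeration in dimensions $d\ge 3$: one needs a clean bound of the form $\gamma_d^\ell$ on $*$-connected contour surfaces of $\ell$ faces through a root face. In $d=2$ this is classical Peierls counting; in $d\ge 3$ one either invokes existing plaquette-surface estimates or replaces the contour parameterization by a coarser one (for instance, a spanning $*$-path inside $\sigma(\cY)$ of length $\gtrsim\rho$ between two diameter-realising cubes), which gives the same qualitative decay with slightly worse constants but is still sufficient for $O(n^{-2})$. A secondary delicate point is the treatment of contour pieces close to $\partial A$, handled via the $C^2$ or polygonal structure of the boundary together with the geometric lemmas of Section~2.
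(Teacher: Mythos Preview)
The reduction in your first paragraph is the fatal flaw. You correctly observe that if $(\cY,\cY')$ witnesses $H_{r_n,\rho}$, then $\cY$ alone is a $(k-1)$-separating set of diameter $>\rho r_n$. But the event ``there exists a $(k-1)$-separating set of diameter $>\rho r_n$'' has probability bounded away from zero, not $O(n^{-2})$. Indeed, take $\cY$ to be all of $\eta_n$ except for a single vertex $z$ that happens to have degree at most $k-1$ in $G(\eta_n,r_n)$. Then $\diam(\cY)$ is essentially $\diam(A)\gg\rho r_n$, $|\Delta\cY|\le k-1$, and $\eta_n\setminus(\cY\cup\Delta\cY)\supset\{z\}$ is nonempty, so $\cY$ is a $(k-1)$-separating set. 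Under \eqref{e:Econv} the probability that such a vertex $z$ exists is $\Theta(1)$ by Proposition~\ref{p:poisson2d}. So the event you have reduced to is far too coarse to yield $O(n^{-2})$.

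This flaw resurfaces inside your Peierls argument. The claimed lower bound $\ell\ge c\rho$ for the internal contour fails exactly for footprints $\sigma(\cY)$ covering almost all of $A$: projecting along the diameter direction then hits $\partial A$, not the internal contour $\Gamma$, in almost every perpendicular slab, so $\Gamma$ (the faces lying in $A$) can be arbitrarily small relative to $\rho$. Lemmas~\ref{lemgeom3}, \ref{lemgeom1a}, \ref{l:poly_diff_ball} concern ball and annulus volumes and do not rescue this step.

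What makes $H_{r_n,\rho}$ rare is precisely that \emph{both} $\cY$ and $\cY'$ have large diameter: this forces the region in $A$ separating them to have substantial volume, via an isoperimetric-type argument. The paper does not argue this from scratch; it invokes \cite[Equation~(3.14)]{Pen99b} in the $C^2$ case and \cite[Lemma~3.12]{PYpoly} in the polygonal case, both of which exploit the pair structure throughout. A contour argument along your lines can be made to work, but it must be organized around the \emph{smaller} of the two sets (so that its complement in $A$ still has diameter $>\rho r_n$), and the relevant ``contour'' is the interface within $A$ between the two sides, not the exterior cube-shell of one set taken in isolation.
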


\begin{proof}
	{\bf Case 1: $\partial A \in C^2$.} 
	See \cite[Equation (3.14)]{Pen99b}.
	That result is formulated only for $\cX_n$, not for $\Po_n$,
	and also only for the case $k=0$.
	However, it uses only the probability bound that if
	$X $ is binomial with mean $\mu$ then
	$\PP[X =0 ] \leq e^{-\mu}$. Using a standard Chernoff bound,
	e.g. \cite[Lemmas 1.1 and 1.2]{Pen},
	we have that if $X$ is either binomial or Poisson 
	distributed with mean $\mu$ for $\mu$ sufficiently large,
	we have $\PP[X < k] \leq e^{-\mu/2}$, and using
	this we can readily adapt the argument in
	\cite{Pen99b} to the generality required here.

	{\bf Case 2: $d=2$ and $A$ is polygonal.}
	In this case we use the proof of \cite[Lemma 3.12]{PYpoly}.
	Our $r_n$ is not quite the same as there, but the argument
	works for our $r_n$ too; the properties of $r_n$ given in
	Lemma \ref{l:rlog} are sufficient. Again, the
	proof in \cite{PYpoly} is only for $\cX_n$,  but it
	relies only on Chernoff probability bounds for 
	a binomial random variable, which also apply
	for a Poisson random variable with the same mean and
	therefore the result holds for $\Po_n$
	as well as for $\cX_n$.
%
	%LATER: reinstate the next paragraph for non-convex polygons?
	%That said, there is one more subtle point when 
	%applying \cite[Lemma 3.12]{PYpoly}. Without a convexity assumption on the polygon, the lower bound in \cite[Equation (3.13)]{PYpoly} does not hold naively for a candidate optimal $a_j$ value as would appear in the general lower bound for the largest nearest neighbour link (hence also the connectivity threshold under WA) in \cite[Lemma 3.5]{PYpoly}. In fact, the proof of \cite[Lemma 3.12]{PYpoly} does not rely on the fine estimate of volume, but rather the condition (G) there. This condition was checked in \cite[Proposition 3.15]{PYpoly} and can readily be extended to the non-convex polygon case. 
	%
%	todo: is this good enough? The reason why we don not need a precise volume lower bound in the 2d polygonal case is that small or medium sized separating sets are unlikely to appear near the corners. 
%	Note we assumed the convexity of the polytope to verify the
%	conditions on $A$ for \cite[Lemma 3.12]{PYpoly}.  See
%	\cite[Lemma 3.13]{PYpoly}. So if we want more general
%	polygons, would need to generalise that result to more
%	general polygons (eg simply connected but not convex?)
%	which might be possible.
%	{\bf [Do this! Or restrict to convex polygons]}
\end{proof}

We can now bound  $\PP[L_{n,k} \leq r_n < M_{n,k}]$,
which is the result we have been leading up to in this whole section.

\begin{proposition}\label{p:L<M}
	Let $\beta' \in (0,\infty)$ and suppose $(r_n)_{ n\geq 1}$ satisfies \eqref{e:Econv}. Then as $n \to \infty$,
	\begin{align}
		\PP[ L_{n,k}\le r_n< M_{n,k}] = O( (\log n)^{1-d});
		\label{e:0414a}
		\\
		\PP[ L_{k}(\cX_n) \le r_n< M_{k}(\cX_n)] = O( (\log n)^{1-d}).
		\label{e:0414abin}
	\end{align}
\end{proposition}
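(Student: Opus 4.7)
The plan is to combine Lemma \ref{l:LrM}, Lemma \ref{l:2large}, and the size-segregated separating-set bounds from Lemmas \ref{l:smallK}, \ref{l:medK}, and \ref{l:polycor}. First, fix $\rho>0$ so that $\PP[H_{r_n,\rho}(\eta_n)]=O(n^{-2})$, as supplied by Lemma \ref{l:2large}. By Lemma \ref{l:LrM}, on the event $\{L_{n,k}\le r_n<M_{n,k}\}\cap\{|\eta_n|\ge k+2\}$ there exists a $(k-1)$-separating pair $(\cY,\cY')$ with neither component a singleton. On the complement of $H_{r_n,\rho}(\eta_n)$ at least one of these two sets has diameter in $(0,\rho r_n]$. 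Letting $x$ be the $\prec$-minimal element of this small-diameter set yields $\cY\in\cC_{r_n}(x,\eta_n\setminus\{x\})$ with $0<\diam(\cY)\le\rho r_n$. The contribution from $\{|\eta_n|<k+2\}$ is negligible ($O(n^{k+1}e^{-n})$), so everything reduces to bounding the probability of the existence of such an $x$.

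Next I apply Markov's inequality together with the Mecke formula \eqref{e:mecke}, which gives
\begin{align*}
\PP[\exists\,x\in\eta_n,\ \exists\,\cY\in\cC_{r_n}(x,\eta_n\setminus\{x\}),\ 0<\diam\cY\le\rho r_n]
\le n\int_A \PP[G_{x,r_n}]\,\nu(dx),
\end{align*}
where $G_{x,r_n}$ denotes the event that such a $\cY$ exists in $\cC_{r_n}(x,\eta_n)$. Let $\delta$ be the constant from Lemma \ref{l:smallK} and split $G_{x,r_n}$ into the small-diameter part $E_{x,\delta,r_n}(\eta_n)$ and the medium-diameter part $F_{x,\delta,\rho+1,r_n}(\eta_n)$, which together cover $\diam\cY\in(0,\rho r_n]$. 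In the $C^2$ case Lemmas \ref{l:smallK}(i) and \ref{l:medK}(i) yield
\begin{align*}
\PP[G_{x,r_n}]\le c\,p_{n,r_n}(x)\bigl[(nr_n^d)^{1-d}+e^{-\delta' nr_n^d}\bigr].
\end{align*}
Integrating and invoking \eqref{e:EEF} gives the bound $\bigl[(nr_n^d)^{1-d}+e^{-\delta' nr_n^d}\bigr]\,\EE[\xxi_{n,r_n}]$. Since $\EE[\xxi_{n,r_n}]\to\beta'$ by \eqref{e:Econv} and $nr_n^d=\Theta(\log n)$ by Lemma \ref{l:rlog}, the first term is $O((\log n)^{1-d})$ and the second is $O(n^{-c})$ for some $c>0$. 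Adding the $O(n^{-2})$ contribution from Lemma \ref{l:2large} establishes \eqref{e:0414a} in the smooth case.

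In the polygonal case ($d=2$), we choose $K$ so large that the restrictions of Lemmas \ref{l:smallK}(ii), \ref{l:medK}(ii) and \ref{l:polycor} are all satisfied. For $x\in A\setminus\Cor^{(Kr_n)}$ the argument above goes through verbatim. For $x\in\Cor^{(Kr_n)}$, Lemma \ref{l:polycor} gives $\PP[G_{x,r_n}]\le c\,e^{-\delta nr_n^2}$; since $\nu(\Cor^{(Kr_n)})=O(r_n^2)$, the resulting contribution to $n\int\cdots\,\nu(dx)$ is $O(nr_n^2 e^{-\delta nr_n^2})=O((\log n)\,n^{-c})$, which is also $O((\log n)^{-1})$. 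The binomial statement \eqref{e:0414abin} is proved identically: the $H$-event bound in Lemma \ref{l:2large} applies to $\cX_n$ as well, and Lemmas \ref{l:smallK}, \ref{l:medK}, \ref{l:polycor} all furnish the corresponding bounds with $\cX_{n-1}$ in place of $\eta_n$, via the elementary identity $\EE\sum_{x\in\cX_n}f(x,\cX_n)=n\int\EE[f(x,\cX_{n-1}\cup\{x\})]\nu(dx)$ in lieu of Mecke.

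The calculations are essentially routine given the preparatory lemmas; the main thing to be careful about is choosing the parameters $\delta$ and $\rho$ in a consistent order so that Lemma \ref{l:smallK} covers the small-diameter regime, Lemma \ref{l:medK} covers the medium regime $(\delta r_n,\rho r_n]$, and Lemma \ref{l:2large} covers the large regime with the same $\rho$. The only genuinely new subtlety relative to earlier work is the corner analysis in the polygonal case, but this has already been isolated into Lemma \ref{l:polycor}, and its contribution is far smaller than the dominant $(\log n)^{1-d}$ term.
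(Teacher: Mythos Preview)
Your proposal is correct and follows essentially the same approach as the paper: reduce via Lemma~\ref{l:LrM} and Lemma~\ref{l:2large} to bounding the probability of a non-singleton separating set of diameter at most $\rho r_n$, apply Markov plus Mecke (or its binomial analogue), and then split the resulting integral into small/medium diameter regimes (plus corners in the polygonal case) using Lemmas~\ref{l:smallK}, \ref{l:medK}, \ref{l:polycor}. The only cosmetic differences are that the paper names the intermediate event $J_{r,\rho}$ explicitly and handles the boundary case $\delta\ge\rho$ by a WLOG assumption rather than by padding to $F_{x,\delta,\rho+1,r_n}$.
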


\begin{proof}
	Given $ r, \rho \in (0,\infty)$, and finite
	$\cX \subset \R^d$, define the event 
	\begin{align*}
		J_{r,\rho}(\cX)	 :=\{ \exists x \in \cX, \cY\in 
		\cC_{r}(x,\cX), 0<\diam(\cY)\le \rho r \}.
	\end{align*}
	By Lemma \ref{l:LrM},
%	As discussed at the start of this section,
	if $L_k(\cX)
	\leq r_n < M_k(\cX)$, then either $J_{r_n,\rho}(\cX)$
	or $H_{r_n,\rho}(\cX)$ occurs. Hence
	by Lemma \ref{l:2large}, it suffices to prove that for any
	$\rho \in (0,\infty)$, the events $J_{r_n,\rho}(\cP_n)$
	and $J_{r_n,\rho}(\cX_n)$
	occur with probability $O((\log n)^{1-d})$ as $n\to\infty$.
	
	{\bf Case 1: $\partial A \in C^2$.}
	Fix $\rho \in (0,\infty)$.
	Let $N_n$ denote the (random) number of $x \in \Po_n$ such
	that there exists a  $\cY \in \cC_{r_n}(x,\Po_n)$
	with 
	$0 < \diam (\cY) \leq \rho r_n$.
	By Markov's inequality $\PP[J_{\rho,r_n}(\Po_n)]=
	\PP[N_n\ge 1]\le \EE[N_n]$.
	
	Let $\delta$ be as in
	Lemma \ref{l:smallK} (i) and assume without loss of generality that
	 $0 < \delta <  \rho $. Then by the Mecke equation and the definitions
	of $E_{x,\rho,r}$ and $F_{x,\eps,\rho,r}$
	at \eqref{e:defE} and \eqref{e:defF},
	and the union bound,
	\begin{align}
		\EE[N_n] &\le n \int_A \PP[E_{x,\delta,r_n}(x,\eta_n)] \nu(dx) 
		+ n \int_A \PP[F_{x,\delta,\rho,r_n}(x,\eta_n)] \nu(dx).
		\label{e:MarkMeck}
	\end{align}
	Using Lemma \ref{l:smallK} for the first integral and
	Lemma \ref{l:medK} for the second integral,
	and \eqref{e:EEF},
	we can find $c, \delta_2 \in (0,\infty)$ such that
	for large enough $n$ we have that
	\begin{align*}
		\EE[N_n]\leq c(  (nr_n^d)^{1-d}
		+ e^{-\delta_2 n r_n^d} ) \EE[\xxi_{n,r_n}],
	\end{align*}
	where $\xxi_{n,r}$ was defined at \eqref{e:def_isoVer}.
	%(not related to $F_{x,\delta,\rho,r}$). 
%	todo:{\bf [can we avoid this near-clash]}
	From this, we obtain the claimed 
	estimate
	\eqref{e:0414a}
	by \eqref{e:Econv} and Lemma \ref{l:rlog}. 
	
	{\bf Case 2: $d=2$ and $A$ is polygonal.}
	Fix $\rho \in (0,\infty)$.
	Let $\delta $ be as in Lemma \ref{l:smallK} (ii);
	assume without loss of generality that $0 < \delta < \rho$.
	By a similar argument to (\ref{e:MarkMeck}) we have
	\begin{align*}
		\PP[J_{\rho,r_n}(\Po_n)]
		& \leq  \int_{A \setminus \Cor^{(Kr_n)}} \PP[E_{x,\delta,r_n}(x,\eta_n)] n \nu(dx).
		\nonumber  \\
		& +  \int_{A \setminus \Cor^{(Kr_n)}} 
		\PP[F_{x,\delta,\rho,r_n}(x,\eta_n)] n \nu(dx).
		\nonumber  \\
		& +  \int_{\Cor^{(Kr_n)}} 
		\PP[\{\exists \cY\in\cC_{r_n}(x,\eta_n), \diam(\cY)\le \rho r_n \}] n \nu(dx).
	\end{align*}
	We can deal with the first two integrals just as we did in
	Case 1. By Lemma \ref{l:polycor}, there is a constant
	$\delta'$ such that the third integral is 
	$O(nr_n^2\exp(-\delta' nr_n^2))$,
	which completes the proof of \eqref{e:0414a}
	for this case.
	
	The proof of \eqref{e:0414abin} is identical, now
	relying on the binomial parts of Lemmas \ref{l:smallK}--\ref{l:2large}. We omit the details. 
\end{proof}

\section{Proof of Theorem \ref{t:nonunif}}

Throughout this section we assume that $\partial A \in C^2$ or
that $A$ is a convex polygon. We adopt our WA but do not
assume $f$ is necessarily constant on $A$.

\subsection{Proof of parts (i) and (ii)}

%LATER: maybe change $n$ to $t$ round here.

Given $\beta \in \R$, choose $n_0(\beta)$ such that
$n_0(\beta) > e^{-\beta}$ and
$e^{-n} \sum_{j=0}^{k-1}(n^{j+1})/j! < e^{-\beta}$ for all $n \in [n_0(\beta),\infty)$.
%For example we could take $n_0(\beta):=\max(2e^{-\beta},2\beta,9)$.
Recall the definition of $p_{n,r}(x)$ at \eqref{e:pnj}.
Given $n  \geq n_0(\beta)$,
define $r_n (\beta) \in (0,\infty)$ by
%to be the unique solution in $(0,\infty)$ to 
	\begin{align}
		r= r_n(\beta) \Longleftrightarrow
		n \int_A  p_{n,r}(x) 
	%	\sum_{j=0}^{k-1} ((n \nu(B_r(x)))^j/j!)
	%	\exp(-n \nu(B_{r}(x)) )
		\nu(dx) = e^{-\beta}.
		\label{e:Eeq}
	\end{align}
	By the Intermediate Value theorem, such an $r_n(\beta)$ exists
	and is unique (note that the integrand is
	nonincreasing in $r$ because the Poisson$(\lambda)$
	distribution is stochastically monotone in $\lambda$).
	Moreover, for $- \infty < \beta < \gamma < \infty$ and
	$n \geq \max(n_0(\beta),n_0(\gamma) )$,
	we have $r_n(\beta) < r_n(\gamma)$.

	We first determine the first-order limiting behaviour of
	$r_n(\beta)$.
\begin{lemma}
	\label{l:rfirstorder}
	Let $\beta \in \R$ and let $ r_n(\beta)$ satisfy
	\eqref{e:Eeq} for all $n > n_0(\beta)$.
	Then 
	\begin{align}
		\lim_{n \to \infty}(n\theta_d r_n(\beta)^d/\log n)
		= \max( 1/f_0, (2-2/d)/f_1).
		\label{e:limr}
	\end{align}
	If also $\gamma \in \R$ with $\beta < \gamma$, then
	\begin{align}
		\lim_{n \to \infty} \sup_{x \in A}
		\big(
		\nu(B(x,r_n(\gamma)))/\nu(B(x,r_n(\beta))) \big)
		= 1.
		\label{e:limratio}
	\end{align}
\end{lemma}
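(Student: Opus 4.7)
Write $I_n(r) := n\int_A p_{n,r}(x)\,\nu(dx)$; by \eqref{e:Eeq} we have $I_n(r_n(\beta)) = e^{-\beta}$, and $I_n$ is nonincreasing in $r$. Set $a^* := \max(1/f_0, (2-2/d)/f_1)$, and for $\eps > 0$ define $r_n^{\pm}$ by $n\theta_d(r_n^{\pm})^d = (a^*\pm\eps)\log n$. By monotonicity of $I_n$, to prove \eqref{e:limr} it suffices to show, for every $\eps > 0$, that $I_n(r_n^+) \to 0$ and $I_n(r_n^-) \to \infty$ as $n \to \infty$.

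For the upper bound, split $A$ into the deep interior $A^{(-s_n)}$ and the boundary tube $(\partial A)^{(s_n)}$ with $s_n \to 0$ and $s_n/r_n^+ \to \infty$, further isolating the corner strip $\Cor^{(K r_n^+)}$ in the polygonal case. In the deep interior, Lemma \ref{lemgeom1a}(i) and uniform continuity of $f$ yield $\nu(B_{r_n^+}(x)) \geq f_0\theta_d(r_n^+)^d(1-o(1))$, producing a contribution of order $(\log n)^{k-1} n^{1-(a^*+\eps)f_0 + o(1)} \to 0$ since $a^* f_0 \geq 1$. In the boundary tube, parametrise $x$ by its nearest point $y \in \partial A$ and $t := \dist(x,\partial A) \in [0,s_n]$; equation \eqref{0127a} and its polygonal counterpart give $\nu(B_{r_n^+}(x)) \geq f_1(1-o(1))\bigl[(\theta_d/2)(r_n^+)^d + \theta_{d-1} t (r_n^+)^{d-1}\bigr]$ for $t \in [0, r_n^+]$. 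Integrating the resulting exponential in $t$ produces a factor of $1/(n(r_n^+)^{d-1})$, so the tube contribution is bounded by $(\log n)^{k-2+1/d} n^{1 - 1/d - (a^*+\eps)f_1/2 + o(1)} \to 0$ since $a^* f_1/2 \geq 1 - 1/d$; the range $t \in (r_n^+, s_n]$ reduces to the interior estimate, and the corner strip has $\nu$-mass $O((r_n^+)^2)$, giving a contribution of the form $(\log n)^k n^{-c}$ with $c > 0$, hence negligible. For the lower bound, pick $x^*$ attaining the relevant minimum ($x^* \in A$ with $f(x^*) = f_0$ if $a^* = 1/f_0$, or $x^* \in \partial A$ with $f(x^*) = f_1$ if $a^* = (2-2/d)/f_1$); on a fixed neighbourhood of $x^*$, matching upper bounds on $\nu(B_{r_n^-}(x))$ together with continuity of $f$ imply $p_{n,r_n^-}(x) \geq n^{-(1-\eta)}$ (respectively $n^{-(1-1/d-\eta)}$ after integration over each radial slice) for some $\eta = \eta(\eps) > 0$, so that $I_n(r_n^-) \to \infty$. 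This yields \eqref{e:limr}.

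For \eqref{e:limratio}, apply \eqref{e:limr} to both $\beta$ and $\gamma$ to conclude $r_n(\gamma)^d/r_n(\beta)^d \to 1$, and combine \eqref{e:ballbds} with the elementary estimate $\nu(B(x,r_n(\gamma))) - \nu(B(x,r_n(\beta))) \leq \fmax\theta_d(r_n(\gamma)^d - r_n(\beta)^d)$ to get
\begin{align*}
1 \leq \frac{\nu(B(x,r_n(\gamma)))}{\nu(B(x,r_n(\beta)))} \leq 1 + \frac{\fmax\theta_d}{2\delta_0}\bigl[(r_n(\gamma)/r_n(\beta))^d - 1\bigr] \longrightarrow 1,
\end{align*}
uniformly in $x \in A$. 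The main obstacle lies in the upper-bound bookkeeping for $I_n(r_n^+)$: the interior and tube estimates must be balanced using the precise boundary volume expansion so that the two critical exponents $1/f_0$ and $(2-2/d)/f_1$ combine into the single threshold $a^*$, and in the polygonal case one must verify that the corners contribute only lower-order corrections.
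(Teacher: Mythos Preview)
Your argument is correct, and your treatment of \eqref{e:limratio} coincides with the paper's. However, your proof of \eqref{e:limr} takes a genuinely different route.

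The paper argues indirectly: by Proposition~\ref{p:poisson2d}, the defining relation \eqref{e:Eeq} forces $\PP[L_{t,k}\le r_t(\beta)]\to\exp(-e^{-\beta})\in(0,1)$; on the other hand, the already-known strong laws \eqref{e:SLLN_smooth} and \eqref{e:SLLN_cube} (transferred to the Poisson setting by conditioning on the number of points) give $t\theta_d L_{t,k}^d/\log t\toP a^*$. These two facts together pin down $n\theta_d r_n(\beta)^d/\log n\to a^*$, since if the latter stayed below (or above) $a^*$ along a subsequence, the probability $\PP[L_{t,k}\le r_t(\beta)]$ would tend to $0$ (respectively $1$), a contradiction.

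Your approach instead computes $I_n(r_n^\pm)$ directly by decomposing $A$ into an interior piece and a boundary tube (plus corners in the polygonal case), and matching the exponents against $1/f_0$ and $(2-2/d)/f_1$. This is more elementary and self-contained---it does not invoke the external SLLN results from \cite{Pen99c,Pen,PYpoly}---and in effect re-derives the first-order behaviour of $\EE[\xxi_{n,r}]$ from scratch. The cost is length: you need both lower and upper volume estimates for $|B_r(x)\cap A|$ near $\partial A$ (the paper only states the lower bound \eqref{0127a}, though the matching upper bound follows by the same Taylor argument), and the case analysis for the lower bound on $I_n(r_n^-)$ requires noting that when $a^*=1/f_0>(2-2/d)/f_1$ the minimiser $x^*$ of $f$ necessarily lies in the interior of $A$. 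The paper's route is shorter precisely because it recycles heavy machinery (Poisson approximation and prior strong laws) already in hand.
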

\begin{proof}
	By Proposition \ref{p:poisson2d}, 
		as $t \to \infty$ (through $\R$),
	\begin{align}
		\PP[L_{t,k} \leq r_t(\beta)] \to
	 \exp(-e^{-\beta}).
		\label{e:Lnkwk}
	\end{align}

	 On the other hand, we claim that
	 $t \theta_d L_{t,k}^d/\log t \toP \max(1/f_0,(2-2/d)/f_1)$
	 as $t \to \infty$.  Indeed, writing
	 $N_t$ for the number of points of the Poisson
	 process $\eta_t$, 
	 we have
	 \begin{align}
	 t \theta_d L_{t,k}^d/\log t 
		 =  (N_t \theta_d L_{t,k}^d/\log N_t )  \times
		 (t/N_t) 
		 \times 
		 ( \log N_t/ \log t). 
		 \label{e:mixbin}
	 \end{align}
	 Since the conditional distribution
	 of $L_{t,k} $, given $N_t =n$, is that of $L_{k}(\cX_n)$,
	 we have from \eqref{e:SLLN_smooth} when $A$ has
	 a $C^2$ boundary, and 
	 from \eqref{e:SLLN_cube} when $A$ is a convex polygon,
	 %LATER: see what else in the paper is needs  this, which
	 %requires convexity.
	 that 
	 the first factor in the right hand side of \eqref{e:mixbin}
	 tends to 
	 $\max(1/f_0,(2-2/d)/f_1)$
	 in probability, and by Chebyshev's inequality the 
	 second factor also tends to 
	 $1$
	 in probability, from
	 which we can deduce the third factor also tends
	 to 1 in probability.
	 Combining these  gives us the claim.

	 Let $\alpha < \max(1/f_0,(2-2/d)/f_1)$.
	 By the preceding claim we have
	 $\PP[t \theta_d L_{t,k}^d/\log t < \alpha] \to 0$,
	 and hence
	 by \eqref{e:Lnkwk},
	 $t \theta_d r_t^d/ \log t \geq \alpha
	 $ for $t $ large.  Similarly, if $\alpha' > \max(1/f_0,(2-2/d)/f_1)$
	 then 
	 $\PP[t \theta L_{t,k}^d/\log t \leq \alpha] \to 1$, so
	  $t \theta_d r_t^d/ \log t \leq \alpha'$ for $t$ large.
	  Combining these assertions gives us \eqref{e:limr}.
	  %{\bf [Only for convex polygons so far]}. 

	  For \eqref{e:limratio}, note that
	  %there exists $ r_0 >0, \delta_1 >0$ such that 
	  %$\nu(B_r(x)) \geq \delta_1 r^d$ for all
	  %$r \in (0,r_0), x \in A$.
	  %This can be seen using
	  % Lemma \ref{lemgeom1a}-(i) if $\partial A$ is smooth,
	  % or directly if $A$ is polygonal {\bf [Maybe we use
	  % this several times - give as separate lemma?]}
	  %Moreover
	  $\nu(B_s(x) \setminus B_r(x)) \leq \fmax \theta_d
	  (s^d-r^d)$ for $x \in A$ and $0<r<s $.
	  Therefore using \eqref{e:ballbds},
	  for all $x \in A$ and all large enough $n$ we have
	  \begin{align*}
		  \frac{\nu(B_{r_n(\gamma)}(x) \setminus
		  B_{r_n(\beta)}(x))}{\nu(B_{r_n(\beta)}(x))}
		  \leq
		  \frac{
		  \fmax \theta_d (r_n(\gamma)^d - r_n(\beta)^d)
		  }{ \delta_0 r_n(\beta)^d,
		  }
	  \end{align*}
	  which tends to zero by \eqref{e:limr}, and \eqref{e:limratio}
	  follows. 
 \end{proof}

Recall that $\mu(X)$ denotes the median of a random variable $X$.
For non-uniform $\nu$, it seems to be hard in general to 
find a formula for $r_n(\beta)$ satisfying (\ref{e:Eeq}) (even if 
the equality is replaced by convergence). However, if we can determine
a limit for $nr_n(\gamma)^d- nr_n(\beta)^d$ for all $\beta < \gamma$,
then we can still obtain a weak limiting distribution for
$nM_{n,k}^d- n\mu(M_{n,k})^d$ without
giving an explicit sequence for $\mu(M_{n,k})$. The next lemma
spells out this argument.

\begin{lemma}
	\label{l:Gum}
Suppose there exists $\alpha \in (0,\infty)$ such that for all
$\beta,\gamma \in \R $ with $\beta < \gamma$, we have as
$n \to \infty$ that
\begin{align}
	\lim_{n \to \infty} n (r_n(\gamma)^d - r_n(\beta)^d)
	= \alpha (\gamma - \beta), 
		\label{e:0422a2}
\end{align}
where $r_n(\beta)$ is defined by \eqref{e:Eeq}.
%Let $\beta \in \R$ and
	Suppose  that $(X_n)_{n>0}$ are random variables
	satisfying
	\begin{align}
	\lim_{n \to \infty}
	\PP[X_n \leq r_n(\beta)] = \exp(-e^{-\beta}), ~~~
	\forall~ \beta \in \R.
		\label{e:Xr}
	\end{align}
	Then
	\begin{align}
	n X_{n}^d - n \mu(X_{n})^d \tod \alpha(\Gum + \log \log 2)
	~~~~~{\rm as}~n \to \infty.
		\label{e:wkcentred}
	\end{align}
\end{lemma}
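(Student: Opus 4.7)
The plan is to rewrite everything in terms of $d$th powers. Set $Y_n := n X_n^d$ and $s_n(\beta) := n r_n(\beta)^d$. Since $t \mapsto n t^d$ is strictly increasing on $[0,\infty)$, we have $\mu(Y_n) = n\mu(X_n)^d$, so \eqref{e:wkcentred} is equivalent to $Y_n - \mu(Y_n) \tod \alpha(\Gum + \log\log 2)$. The target limit has continuous cdf
\[
 \PP[\alpha(\Gum + \log\log 2) \leq t] = \exp\bigl(-(\log 2)\, e^{-t/\alpha}\bigr), \qquad t \in \R,
\]
which equals $1/2$ at $t = 0$, confirming that the shift $\log\log 2$ places the median at $0$. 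In the new variables, the hypotheses become: (a) $\PP[Y_n \leq s_n(\beta)] \to \exp(-e^{-\beta})$ for every fixed $\beta \in \R$, and (b) $s_n(\gamma) - s_n(\beta) \to \alpha(\gamma - \beta)$ for every $\beta < \gamma$. Moreover $s_n(\cdot)$ is strictly increasing, because the integrand $p_{n,r}(x)$ in \eqref{e:Eeq} is strictly decreasing in $r$.

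The first step is to show $\mu(Y_n) - s_n(-\log\log 2) \to 0$. Fix $\delta > 0$. Using (b), we have $s_n(-\log\log 2 + \delta) \leq s_n(-\log\log 2) + 2\alpha\delta$ for all large $n$, and combining this with monotonicity of the cdf of $Y_n$ and (a) yields
\[
 \liminf_{n\to\infty} \PP\bigl[ Y_n \leq s_n(-\log\log 2) + 2\alpha\delta \bigr] \geq \exp\bigl(-(\log 2)\, e^{-\delta}\bigr) > \tfrac{1}{2},
\]
so $\mu(Y_n) \leq s_n(-\log\log 2) + 2\alpha\delta$ eventually. The symmetric argument using $s_n(-\log\log 2 - \delta)$ and the inequality $\exp(-(\log 2) e^{\delta}) < 1/2$ gives the matching lower bound. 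Since $\delta > 0$ is arbitrary, the claim follows.

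For the main statement, fix $t \in \R$ and set $\beta^\ast := -\log\log 2 + t/\alpha$. By the preceding step and (b), $\mu(Y_n) + t = s_n(-\log\log 2) + t + o(1) = s_n(\beta^\ast) + o(1)$. For any $\eta > 0$, (b) also gives $s_n(\beta^\ast \pm \eta) = s_n(\beta^\ast) \pm \alpha\eta + o(1)$, so eventually $s_n(\beta^\ast - \eta) < \mu(Y_n) + t < s_n(\beta^\ast + \eta)$. Monotonicity of the cdf of $Y_n$ and (a) then sandwich
\[
 \exp\bigl(-e^{-(\beta^\ast - \eta)}\bigr) \leq \liminf_{n\to\infty} \PP[Y_n \leq \mu(Y_n) + t] \leq \limsup_{n\to\infty} \PP[Y_n \leq \mu(Y_n) + t] \leq \exp\bigl(-e^{-(\beta^\ast + \eta)}\bigr),
\]
and letting $\eta \downarrow 0$ yields $\lim_n \PP[Y_n \leq \mu(Y_n) + t] = \exp(-(\log 2) e^{-t/\alpha})$, the cdf of $\alpha(\Gum + \log\log 2)$ at $t$. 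Since this limiting cdf is continuous and $t$ was arbitrary, \eqref{e:wkcentred} follows. The main obstacle is the median identification in the preceding paragraph: without any quantitative rate in (a), we rely on the strict monotonicity of the target Gumbel cdf at $\beta = -\log\log 2$, together with the affine separation guaranteed by (b), to execute the two-sided sandwich purely qualitatively.
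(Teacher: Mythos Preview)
Your proof is correct and follows essentially the same sandwich argument as the paper: both use hypothesis (b) to trap the quantity of interest between $s_n(\beta^*-\eta)$ and $s_n(\beta^*+\eta)$, then apply (a) and let $\eta\downarrow 0$. The only cosmetic difference is the order of the two steps—the paper first establishes $nX_n^d - nr_n(\beta_0)^d \tod \alpha(\Gum+\log\log 2)$ with $\beta_0=-\log\log 2$ and then deduces $n\mu(X_n)^d - nr_n(\beta_0)^d \to 0$, whereas you pin down the median location first and then prove the weak limit centred at $\mu(Y_n)$ directly.
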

\begin{proof}
Set $\beta_0 = -\log \log 2$.  Let $r_n = r_n(\beta_0)$ and let
	$-\infty  < y < x < y' < \infty $.   Set
	$ s_n := r_n(\beta_0+y/\alpha)$ and
	$ s'_n := r_n(\beta_0+y'/\alpha)$.
	 Then by (\ref{e:0422a2}), 
	$n(s_n^d -r_n^d) \to y$ and
	$n((s'_n)^d -r_n^d) \to y'$.
	 Hence for $n$ large we have $ns_n^d < x + nr_n^d$ and
	  $n(s'_n)^d > x + nr_n^d$, so that
	  by \eqref{e:Xr}, setting $F(x):=\exp(-e^{-x})$ we have
$$
	\PP[n X_{n}^d -n r_n^d \leq x ] \geq
	\PP[n X_{n}^d  \leq n s_n^d ] 
	\to F( \beta_0 +  y/\alpha),
	$$
	and 
	similarly
%	taking $s'_n :=r_n(\gamma')$ with $\gamma' := \beta_0+y'/
%	\alpha$, for $n$ large  we have
	$
	\PP[nX_{n}^d - nr_n^d \leq x] \leq \PP[nX_{n}^d \leq n(s'_n)^d]
	\to F(\beta_0 + y'/\alpha).
	$
	Since we can take $y$ and $y'$ arbitrarily close to $x$
	and $F(\cdot)$ is continuous, we can deduce that
	$$
	\PP[nX_{n}^d - nr_n^d \leq x]  \to F(\beta_0 + x/\alpha),
	%~~~~~ x >0.
	~~~~~ x \in \R.
	$$

%Now let $u_n := r_n(\beta_0-y/\alpha)$ and $u'_n := r_n(\beta_0-y'/\alpha)$.
%	 Then by (\ref{e:0422a2}), $n(r_n^d -u_n^d) \to y$
%	and  $n(r_n^d -(u'_n)^d) \to y'$. Hence for $n$ large
%	$nu_n^d >  nr_n^d -x$ and
%	$n(u'_n)^d <  nr_n^d -x$, so that by \eqref{e:Xr},
%$$
%	\PP[n X_{n}^d -n r_n^d \leq -x ] \leq 
%	\PP[n X_{n}^d  \leq n u_n^d ] 
%	\to F( \beta_0 -  y/\alpha),
%	$$
%	while
%$
%	\PP[n X_{n}^d -n r_n^d \leq -x ] \geq 
%	\PP[n M_{n}^d  \leq n (u'_n)^d ] 
%	\to F( \beta_0 -  y'/\alpha).
%	$
%	Taking $y \uparrow x$ and $y' \downarrow x$ yields
%$$
%\PP[n X_{n}^d -n r_n^d \leq -x ] \to
%	 F( \beta_0 -  x/\alpha), ~~~~~ x >0.
%	$$
Since $F(\beta_0+ z/\alpha) = \PP[\alpha (\Gum + \log \log 2) \leq z]$, we
thus have 
\begin{align}
	n X_{n}^d - n r_n^d \tod \alpha (\Gum+\log \log 2).
	\label{e:limX}
\end{align}

Finally we need to check that $n\mu(X_{n})^d - n r_n^d \to 0$.  Let $\eps >0$.
By \eqref{e:limX}, as $n \to \infty$ we have
\begin{align*}
	\PP[nX_n^d- nr_n^d \leq \eps ] & \to \PP [ \alpha(\Gum + \log \log 2) \leq \eps]
> 1/2;
\\
	\PP[nX_n^d- nr_n^d \leq - \eps ] & \to \PP [ \alpha(\Gum + \log \log 2) \leq -
	\eps] < 1/2,
\end{align*}
so for large $n$ we have 
$$
\mu(X_n^d) - nr_n^d = \mu(n X_n^d - nr_n^d) \in [-\eps, \eps],
$$
so $\mu(X_n^d) - nr_n^d \to 0$ as $n \to \infty$, and
then \eqref{e:wkcentred} follows from \eqref{e:limX} and the continuity
of the Gumbel cdf.
\end{proof}

To use Lemma	\ref{l:Gum}, we need to show that \eqref{e:0422a2}
holds for some $\alpha $. We do this first for the case
where $f_1 > f_0(2-2/d)$.

\begin{lemma}
	\label{l:betas}
Let $ \beta , \gamma \in \R$ with $\beta < \gamma$.
Define $r_n(\beta)$ by \eqref{e:Eeq}.
	If $f_1 > f_0(2-2/d)$, then  
\begin{align}
	n(r_n(\gamma)^d -r_n(\beta)^d) \to (\gamma - \beta)/(\theta_df_0)
	~~~~~{\rm as}~ n \to \infty.
	\label{e:0422a}
\end{align}
\end{lemma}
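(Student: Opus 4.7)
The strategy is to show that, under the assumption $f_1 > f_0(2-2/d)$, the integral in the implicit equation \eqref{e:Eeq} decays essentially like $\exp(-n\theta_d f_0 r^d)$ (modulo slowly varying factors), so that $u_n(\beta) := n\theta_d r_n(\beta)^d = f_0^{-1}\log n + O(\log\log n) + \beta/f_0 + o(1)$, from which \eqref{e:0422a} follows by subtraction.  First, Lemma \ref{l:rfirstorder} (using that the hypothesis forces $\max(1/f_0,(2-2/d)/f_1)=1/f_0$) gives $u_n(\beta) \sim (\log n)/f_0$.  Note also that the hypothesis implies $f_1>f_0$, so the infimum $f_0$ is attained only in the interior of $A$; this is what makes the Laplace step below work.

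Next, I would bound the boundary contribution to $nJ_n(r) := n\int_A p_{n,r}(x)\nu(dx)$.  For $x\in(\partial A)^{(r)}\cap A$, uniform continuity gives $f(x)\ge f_1-\omega(r)$ (where $\omega$ is the modulus of continuity of $f$), while Lemma \ref{lemgeom1a}(i) yields $|B_r(x)\cap A|\ge ((\theta_d/2)-\eps)r^d$; combining, $n\nu(B_r(x))\ge (f_1/(2f_0))(1-o(1))\log n$, hence $p_{n,r}(x)\le c(\log n)^{k-1}n^{-f_1/(2f_0)+o(1)}$.  Multiplying by $n$ and the boundary-strip volume $O(r|\partial A|)$ gives a boundary contribution of size $O((\log n)^{k-1+1/d} n^{1-1/d-f_1/(2f_0)+o(1)})$; the hypothesis $f_1>f_0(2-2/d)$, i.e.\ $f_1/(2f_0)>1-1/d$, makes this $o(1)$.

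For the interior comparison, fix $c\in\R$ and define $r^*_n$ by $n\theta_d(r^*_n)^d = u_n(\beta)+c$.  For $x\in A^{(-r^*_n)}$, the annulus $B_{r^*_n}(x)\setminus B_{r_n(\beta)}(x)$ lies in $A$, so uniform continuity of $f$ yields
\[
n\bigl[\nu(B_{r^*_n}(x))-\nu(B_{r_n(\beta)}(x))\bigr] = cf(x)+o(1)
\]
uniformly in such $x$.  Since $\lambda:=n\nu(B_{r_n(\beta)}(x))\to\infty$ throughout this range, the elementary expansion $P_{k-1}(\lambda) := \sum_{j=0}^{k-1}\lambda^j e^{-\lambda}/j! = (\lambda^{k-1}/(k-1)!)e^{-\lambda}(1+O(1/\lambda))$ gives
\[
p_{n,r^*_n}(x)/p_{n,r_n(\beta)}(x) = e^{-cf(x)}(1+o(1))
\]
uniformly on $A^{(-r^*_n)}$.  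Hence, modulo the $o(1)$ boundary contributions from the previous paragraph,
\[
\frac{nJ_n(r^*_n)}{nJ_n(r_n(\beta))} = (1+o(1))\,\EE\bigl[e^{-cf(X_n)}\bigr],
\]
where $X_n$ has density proportional to $p_{n,r_n(\beta)}(x)f(x)\1_{A^{(-r^*_n)}}(x)$.  A Laplace-type argument, comparing the mass of this density on $\{f>f_0+\delta\}$ (where $e^{-n\nu(B_r(x))}\le n^{-1-\delta/(2f_0)+o(1)}$) with that on $\{f<f_0+\delta/10\}$ (a nonempty open set by continuity of $f$ and attainment of $f_0$ in the interior), shows $X_n$ concentrates on $\{f=f_0\}$, so $\EE[e^{-cf(X_n)}] \to e^{-cf_0}$.

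Combining yields $nJ_n(r^*_n)\to e^{-\beta-cf_0}$.  Taking $c=(\gamma-\beta)/f_0$ and then perturbing to $c\pm\eps$ in a sandwich argument (using strict monotonicity of $r\mapsto nJ_n(r)$) forces $u_n(\gamma)-u_n(\beta)\to(\gamma-\beta)/f_0$, which is exactly \eqref{e:0422a}.  The main obstacle I anticipate is verifying the Laplace concentration under the minimal hypothesis that $f$ is merely continuous (no non-degenerate minimum is assumed); fortunately the exponential factor is so strong that a crude two-level comparison exhibiting a polynomial gap in the exponent of $n$ is enough to squeeze out the $o(1)$.
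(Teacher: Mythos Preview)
Your proposal is correct and rests on the same two pillars as the paper's proof: (i) the boundary strip $(\partial A)^{(r)}$ contributes $o(1)$ to $nJ_n(r)$ because $f_1/(2f_0)>1-1/d$, and (ii) the interior integral is dominated by the region where $f$ is close to $f_0$.  The execution differs.  You introduce an auxiliary radius $r^*_n$ with $n\theta_d(r^*_n)^d = n\theta_d r_n(\beta)^d + c$, compute the ratio $p_{n,r^*_n}(x)/p_{n,r_n(\beta)}(x)\to e^{-cf(x)}$ uniformly on the interior, and then run a Laplace-type concentration argument for the probability measure with density proportional to $p_{n,r_n(\beta)}f$ to conclude $nJ_n(r^*_n)\to e^{-\beta-cf_0}$; a monotonicity sandwich with $c=(\gamma-\beta)/f_0\pm\eps$ then pins down $r_n(\gamma)$.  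The paper instead works directly with $r=r_n(\beta)$ and $s=r_n(\gamma)$ and treats the two directions asymmetrically: for $\limsup n(s^d-r^d)$ it uses the global bound $\nu(B_s(x)\setminus B_r(x))\ge \theta_d f_0(s^d-r^d)$ for every interior $x$ (no localization needed, since $f\ge f_0$); for $\liminf n(s^d-r^d)$ it restricts to $A_\eps:=\{f\le f_0+4\eps\}$, uses the reverse annulus bound there, and shows that $A^{(-r)}\setminus A_\eps$ contributes $o(1)$ (which is exactly your Laplace step).  Your symmetric packaging is a little cleaner conceptually; the paper's version is slightly more elementary for the upper bound and avoids the auxiliary $r^*_n$.
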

\begin{proof}
	Given $n$, set $r=r_n(\beta), s=r_n(\gamma)$.
	For all $x \in A^{(-s)}$,
$
\nu(B_s (x) \setminus B_r(x)) \geq \theta_d f_0 (s^d - r^d),
$
	and hence using \eqref{e:Eeq}, we have
\begin{align}
	e^{- \beta } & \geq \int_{A^{(-s)}} 
	\sum_{j=0}^{k-1} ((n \nu(B_r(x)))^j/j!)
	e^{-n \nu(B_s(x))} 
	e^{n \nu(B_s(x) \setminus B_r(x))} n \nu(dx)
\nonumber
\\
	& \geq e^{n\theta_d f_0 (s^d - r^d)} 
	\int_{A^{(-s)}} \sum_{j=1}^{k-1} 
	((n \nu(B_r(x)))^j/j!)e^{-n \nu(B_s(x))} n \nu(dx). 
	%
	% + \int_{(\partial A)^{(s)}} \exp(-n \nu(B_r(x)) ) n\nu(dx).
	\label{e:0421a}
\end{align}

Suppose $\partial A \in C^2$.
	Using our  assumption $f_1 > f_0(2-2/d)$, 
	choose $\delta >0$ such that $(2-2/d)(f_1 - 2\delta)^{-1}
	< f_0^{-1}$.
	%Given $f_1^- < f_1$, 
	Using Lemma \ref{lemgeom1a}-(i) and the continuity
	of $f|_A$ we find for
all large enough $n$ and all $x \in (\partial A)^{(s)}$
	that $\nu(B_r(x)) \geq  \theta_d (f_1 -  \delta) r^d/2$.
	Then
	using Lemma \ref{l:rfirstorder}
	and our assumption  
	$f_1 > f_0(2-2/d)$, 
	we have that
	\begin{align}
		\lim_{n \to \infty} \big(
		n\theta_d r^d/ \log n \big) = f_0^{-1}
		> (2-2/d)(f_1- 2\delta)^{-1}.
		\label{e:0908c}
	\end{align}
Hence there are constants $c, c'$ such that for $n$ large
\begin{align*}
	\int_{(\partial A)^{(s)}} 
	\sum_{j=0}^{k-1}((n\nu(B_r(x)))^j/j!)e^{-n \nu(B_r(x)) }
	n \nu(dx)
	 \leq c (\log n)^{k-1}  n s e^{-n \theta_d (f_1 - \delta) r^d/2}
\\
	 \leq c' (\log n)^{k-1+1/d}
	 n^{1-1/d} \exp (- (f_1- \delta) (1- 1/d) (f_1- 
	 2 \delta)^{-1} 
	\log n),
\end{align*}
which tends to zero.

Suppose instead that $d=2$ and $A$ is polygonal. The preceding estimate
	shows  $\int_{(\partial A)^{(s)} \setminus \Cor^{(s)}} 
	\sum_{j=0}^{k-1} ((n \nu(B_r(x)))^j/j!)
	\exp(-n \nu(B_r(x)) ) n \nu(dx) $ tends to zero. Moreover,
	by \eqref{e:ballbds}
	there exist $c,\delta_0 \in (0,\infty)$ such that
	$$
	\int_{\Cor^{(s)}} 
	\sum_{j=0}^{k-1} 
	\left(
	(n \nu(B_r(x)))^j/j!
	\right)
	e^{-n \nu(B_r(x))}n \nu(dx) 
	\leq c (n r^2)^{k} e^{- \delta_0 nr^2} $$ 
	which tends to zero since $nr^2 \to \infty$ by
	\eqref{e:limr}. Thus in both cases we have that
	  \begin{align}
		  \int_{(\partial A)^{(s)} } 
	  \sum_{j=0}^{k-1} ((n \nu(B_s(x)))^j/j!)
	\exp(-n \nu(B_r(x)) ) n \nu(dx) \to 0.
		  \label{e:0908d}
	  \end{align}
Therefore using (\ref{e:Eeq}) and  \eqref{e:limratio}
we obtain that
\begin{align*}
	e^{-\gamma}
	& = \lim_{n \to \infty} \int_{A^{(-s)}}
	\sum_{j=0}^{k-1} ((n \nu(B_s(x)))^j/j!)
	%\exp(-n \nu(B_s(x)))
	e^{-n \nu(B_s(x))}
	n \nu(dx),
	\\
	& = \lim_{n \to \infty} \int_{A^{(-s)}}
	\sum_{j=0}^{k-1} ((n \nu(B_r(x)))^j/j!)
	%\exp(-n \nu(B_s(x))) 
	e^{-n \nu(B_s(x))} 
	n \nu(dx),
\end{align*}
and then taking $n \to \infty$ in (\ref{e:0421a}) we obtain that
$
	e^{-\beta}  \geq
	e^{-\gamma} \limsup_{n \to \infty}
	\left( e^{n\theta_d f_0 (s^d - r^d)} \right),
	$
so that
\begin{align}
	\limsup(n (s^d-r^d)) \leq (\gamma-\beta)/(\theta_d f_0).
	\label{0421c}
\end{align}

For an inequality the other way, let $\eps >0$ and let
$A_\eps := \{x \in A: f(x) \leq f_0 + 4 \eps\}$.
By the assumed continuity of $f$ on $A$,
for all $n$ large
enough, and all $x \in A_\eps$, we have
$
\nu (B_s(x) \setminus B_r(x)) \leq \theta_d (f_0+ 5 \eps) (s^d-r^d).
$
Therefore by \eqref{e:Eeq},
\begin{align}
	e^{-\beta} & \leq 
	\int_{A_\eps }
	\sum_{j=0}^{k-1} ((n \nu(B_r(x)))^j/j!)
	e^{n \theta_d (f_0+ 5 \eps)(s^d-r^d)}
	e^{-n \nu(B_s(x))} n \nu(dx)
	\nonumber
	\\
	& + \int_{A^{(-r)} \setminus A_\eps} 
	\sum_{j=0}^{k-1} ((n \nu(B_r(x)))^j/j!)
	e^{-n\nu(B_r(x))} n \nu(dx) 
	\nonumber
	\\
	& + \int_{(\partial A)^{(r)} \setminus A_\eps}
	\sum_{j=0}^{k-1} ((n \nu(B_r(x)))^j/j!)
	e^{-n\nu(B_r(x))} n \nu(dx). 
	\label{e:0421b}
\end{align}
The third integral on the right tends to zero by
%as at 
\eqref{e:0908d}. 
For $n$ large enough, and all $x \in A^{(-r)} \setminus A_\eps$,
using \eqref{e:0908c}
we have $n \nu(B_r(x)) \geq n (f_0+3 \eps) \theta_d r^d \geq (f_0+ 3 \eps)
(\log n)/(f_0+ \eps)$, and hence the second integral in \eqref{e:0421b}
tends to zero.  Therefore
\begin{align*}
	\liminf\left( 
	\int_{A_\eps } 
	\sum_{j=0}^{k-1} ((n \nu(B_r(x)))^j/j!)
	e^{n \theta_d (f_0+ 5 \eps)(s^d-r^d)}
	e^{-n \nu(B_s(x))} n \nu(dx) \right)
	\geq e^{-\beta}.
\end{align*}
Also, by \eqref{e:limratio}
the second and third integrals in \eqref{e:0421b}
still tend to zero if we change $B_r(x)$ to $B_s(x)$,
so
\begin{align*}
	e^{-\gamma} 
	= 
	\lim \int_{A_\eps}
	\sum_{j=0}^{k-1} ((n \nu(B_s(x)))^j/j!)
	e^{-n \nu(B_s(x))} n \nu(dx).
\end{align*}
Hence, using \eqref{e:limratio} again we obtain that
\begin{align*}
	%\\ &
	e^{-\gamma} 
	= \lim \int_{A_\eps}
	\sum_{j=0}^{k-1} ((n \nu(B_r(x)))^j/j!)
	e^{-n \nu(B_s(x))} n \nu(dx).
\end{align*}
Hence $\liminf(e^{n \theta_d(f_0+ 5 \eps) (s^d -r^d) }) \times e^{-\gamma}
\geq e^{-\beta}$, so that
$$
\liminf (n(s^d -r^d)) \geq (\gamma- \beta)/(\theta_d (f_0+ 5 \eps)).
$$
Since $\eps >0$ is arbitrary, combining this
with (\ref{0421c})
yields (\ref{e:0422a}).
\end{proof}
Next we show that \eqref{e:0422a2} holds for a different $\alpha$
in the case where $f_1 < f_0(2-2/d)$.
%the inequality involving $f_0$ and $f_1$ is reversed.
\begin{lemma}
	\label{l:bdybetas}
	Let $\beta, \gamma \in \R$ with $\beta < \gamma$.
%	$-\infty < \beta < \gamma < \infty$
%and define $r_n := r_n(\beta),$ $ s_n = r_n(\gamma)$ using
%and define $ r_n(\beta)$ 
	%$ s_n = r_n(\gamma)$ 
%	using \eqref{e:Eeq}.
	If $f_1 < f_0(2-2/d)$, then
\begin{align}
	\lim_{n \to \infty}(n (r_n(\gamma)^d -r_n(\beta)^d)) =
	%\frac{2 (\gamma - \beta)}{ \theta_d f_1}.
	2 (\gamma - \beta)/( \theta_d f_1).
\label{e:0422g} 
\end{align}
	Also, if $f_1 = f_0(2-2/d)$, then
\begin{align}
	\limsup_{n \to \infty}(n (r_n(\gamma)^d -r_n(\beta)^d)) \leq
	%\frac{2 (\gamma - \beta)}{ \theta_d f_1}.
	2 (\gamma - \beta)/ (\theta_d f_1).
\label{e:0427a} 
\end{align}
\end{lemma}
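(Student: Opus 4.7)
The plan is to follow the template of Lemma~\ref{l:betas}, comparing $e^{-\beta} = n\int_A p_{n,r}(x)\nu(dx)$ with $e^{-\gamma} = n\int_A p_{n,s}(x)\nu(dx)$, where $r = r_n(\beta)$ and $s = r_n(\gamma)$. The key identity is that, by \eqref{e:limratio} together with $n\nu(B_r(x))\to\infty$,
\[ p_{n,s}(x) = (1+o(1))\, p_{n,r}(x)\, \exp\bigl(-n[\nu(B_s(x))-\nu(B_r(x))]\bigr) \]
uniformly in $x\in A$, so everything reduces to controlling the volume increment $\nu(B_s(x))-\nu(B_r(x))$ on the regions that carry the mass of \eqref{e:Eeq}. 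Unlike Lemma~\ref{l:betas}, where the interior minimizers of $f$ govern the answer with the full-ball constant $f_0\theta_d$, here the boundary minimizers govern it with the half-ball constant $f_1\theta_d/2$.

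First I would establish the limsup bound \eqref{e:0427a}, which covers both the strict and the critical cases uniformly. The key point is that the standing hypothesis $f_1 \le f_0(2-2/d)$ together with $d\ge 2$ forces $f_0 \ge f_1/2$ (since $1/(2-2/d) \ge 1/2$). Combining this with Lemma~\ref{lemgeom1a}(iii) in the near-boundary zone, and with continuity of $f$ (so $f(x)\ge f_1 - o(1)$ whenever $\dist(x,\partial A) = o(1)$), one obtains a uniform lower bound
\[ \nu(B_s(x)) - \nu(B_r(x)) \ge (f_1\theta_d/2 - O(\eps))(s^d - r^d), \qquad x\in A. \]
Plugging this into the ratio $p_{n,s}/p_{n,r}$, integrating against $n\,\nu(dx)$ and using \eqref{e:Eeq} on both sides yields $e^{-\gamma}\le (1+o(1))\exp\bigl(-n(f_1\theta_d/2 - O(\eps))(s^d-r^d)\bigr)e^{-\beta}$, from which \eqref{e:0427a} follows on letting $\eps\downarrow 0$.

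For the matching liminf in the strict case $f_1 < f_0(2-2/d)$, I would show that the mass of \eqref{e:Eeq} concentrates in a tube $U_\eps$ of thickness $O(s)$ around $\{x\in\partial A : f(x) \le f_1 + \eps\}$. By Lemma~\ref{l:rfirstorder}, $nr^d \sim (2-2/d)/(\theta_d f_1)\log n$, so for $x\in A^{(-s)}$ one has $n\nu(B_r(x)) \ge nf_0\theta_d r^d \sim (1+\delta)\log n$ with $\delta = f_0(2-2/d)/f_1 - 1 > 0$, making the interior contribution $O(n^{-\delta}(\log n)^{k-1})=o(1)$; a similar power-counting in the $O(r)$-tube around $\partial A \setminus U_\eps$, using $n\theta_d r^d/2 \sim (1-1/d)\log n$ and the improved exponential rate coming from $f \ge f_1 + \eps/2$ there, rules out that region too. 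On $U_\eps$, Lemma~\ref{lemgeom3}(ii) provides the matching upper bound $|B_s(x)\setminus B_r(x)\cap A|\le (\theta_d/2+\eps)(s^d-r^d)$, so $\nu(B_s(x))-\nu(B_r(x))\le(f_1+\eps)(\theta_d/2+\eps)(s^d-r^d)$ on $U_\eps$; restricting the integral to $U_\eps$ and running the comparison in reverse gives $\liminf n(s^d-r^d)\ge 2(\gamma-\beta)/(\theta_d(f_1+O(\eps)))$, and \eqref{e:0422g} follows after $\eps\downarrow 0$ together with the limsup bound.

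The main obstacle is this concentration step: it rests on combining the precise first-order rate of $nr^d$ from Lemma~\ref{l:rfirstorder} with the near-boundary volume estimates of Lemmas~\ref{lemgeom1a}(iii) and~\ref{lemgeom3}(ii), and on the observation that $f$ is close to $f_1$ throughout a small neighbourhood of any boundary minimizer. It is precisely the failure of the interior to be asymptotically negligible when $f_1 = f_0(2-2/d)$, where $\delta$ above vanishes and the interior and boundary balance at the same exponential order, that blocks a matching liminf in the critical case and limits the conclusion there to the one-sided statement \eqref{e:0427a}.
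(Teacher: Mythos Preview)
Your approach is essentially the same as the paper's and is correct in outline. There is one technical point to tighten in the liminf step: Lemma~\ref{lemgeom3}(ii) only yields the annulus upper bound $|A\cap B_s(x)\setminus B_r(x)|\le(\theta_d/2+\eps)(s^d-r^d)$ for $x$ in the \emph{thin} tube $(\partial A)^{(\delta s)}$, not on all of $(\partial A)^{(s)}$. So once you take $U_\eps$ thin enough for this lemma to apply, your decomposition $A = A^{(-s)}\cup\{\text{high-}f\text{ boundary tube}\}\cup U_\eps$ still leaves uncovered the intermediate layer $(\partial A)^{(r)}\setminus(\partial A)^{(\delta r)}$ near low-$f$ boundary points, and your two power-counting arguments (interior, and boundary with $f>f_1+\eps$) do not dispose of it.

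The paper handles this layer separately via Lemma~\ref{lemgeom3}(i): for $x\in A^{(-\delta r)}$ one has $|B_r(x)\cap A|\ge((\theta_d/2)+c\delta)r^d$, and since $B_r(x)\subset(\partial A)^{(2r)}$ continuity gives $f\ge f_1-o(1)$ there, so $n\nu(B_r(x))\ge(1-1/d+c')\log n$ and the contribution is $o(1)$. With this extra piece (and, in the polygonal case, the corners removed by the crude bound \eqref{e:ballbds}), your argument goes through exactly as you describe.
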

\begin{proof}
	%By \eqref{e:SLLN_smooth}, there exists $\delta >0$ such that
%\begin{align}
%\lim_{n \to \infty} \frac{n \theta_d r_n^d}{\log n} = \frac{2-2/d}{f_1}
%> \frac{1+ \delta}{f_0}.
%	\label{e:0422c}
%\end{align}
%Hence for $n$ large
%\begin{align}
%%\int_{A^{(-r)} } e^{- n \nu(B_r(x)) } n \nu(dx)
%\leq n e^{-n \theta_d f_0 r^d} = n e^{-(1+\delta) \log n}
%	\label{e:0422e}
%\end{align}
%which tends to zero. Therefore also $\int_{A^{(-r)}} e^{-n \nu(B_s(x))}
%n\nu(dx) \to 0$ as $n \to \infty$, and hence, as $n \to \infty$,
%\begin{align}
%\int_{(\partial A)^{(r)}} e^{-n \nu(B_s(x))} n \nu(dx) \to e^{-\gamma}  
%	\label{e:0423b}
%\\
%\int_{(\partial A)^{(r)}} e^{-n \nu(B_r(x))} n \nu(dx) \to  e^{-\beta}.
%	\label{e:0423c}
%\end{align}
	Assume $f_1 \leq f_0(2-2/d)$.
	For each $n$
	set $r := r_n(\beta)$, $s:= r_n(\gamma)$.
Suppose $\partial A \in C^2$.
	By Lemma \ref{lemgeom1a}-(iii),
	given $\eps >0$, for $n $ large enough and all
	$x \in (\partial A)^{(-s)}$,
we have $\nu(B_s(x) \setminus B_r(x)) \geq (f_1 -\eps) \theta_d
	(s^d - r^d)/2$.
	%{\bf [For this we need something like
	%Lemma \ref{lemgeom3}-(ii) with the inequality the other way]}.
	Also $\nu(B_s(x) \setminus B_r(x)) \geq nf_0 \theta_d
	(s^d-r^d)$ for $x \in A^{(-s)}$. Hence for $n$ large enough
	%Therefore
\begin{align}
	e^{-\beta}  
	%& = 
	%\int_{A} \sum_{j=0}^{k-1} ((\nu(B_r(x)))^j/j!)
	%e^{n \nu(B_s(x) \setminus B_r(x))}
	%e^{-n\nu(B_s(x))} n \nu(dx) 
	%\nonumber \\
	& \geq  
	e^{n (f_1-\eps) \theta_d(s^d -r^d)/2}
	\int_{(\partial A)^{(s)}}
	\sum_{j=0}^{k-1} ((\nu(B_r(x)))^j/j!)
	e^{-n \nu(B_s(x))} n \nu(dx) 
	\nonumber \\
	& ~~~~~~ +
	e^{n f_0 \theta_d (s^d-r^d)} 
	\int_{A^{(-s)}} 
	\sum_{j=0}^{k-1} ((\nu(B_r(x)))^j/j!)
	e^{-n \nu(B_s(x))} n \nu(dx) 
	\nonumber \\
	& \geq  
	e^{n (f_1-\eps) \theta_d(s^d -r^d)/2}
	\Big(
	\int_{(\partial A)^{(s)}}
	\sum_{j=0}^{k-1} ((\nu(B_r(x)))^j/j!)
	e^{-n \nu(B_s(x))} n \nu(dx) 
	\nonumber \\
	& ~~~~~~ +
	\int_{A^{(-s)}} 
	\sum_{j=0}^{k-1} ((\nu(B_r(x)))^j/j!)
	e^{-n \nu(B_s(x))} n \nu(dx) \Big), 
	\label{e:0427c}
\end{align}
since the assumption $f_1 \leq f_0(2-2/d)$ implies $f_0 \geq f_1/2$.
Hence, 
for $n$ large enough,
setting $\psi_n:=  \inf_{x \in A} \big( \nu(B_r(x))/\nu(B_s(x)) \big)$ 
we have
$$
e^{-\beta} \geq e^{n (f_1-\eps) \theta_d(s^d-r^d)/2} 
	e^{-\gamma} \psi_n^{k-1}.
	%\Big( \inf_{x \in A} \nu(B_r(x))/\nu(B_s(x)) \Big)^{k-1},
	$$
By \eqref{e:limratio} we have $\psi_n \to 1$ as $n \to \infty$,
and thus
\begin{align}
\limsup(n(s^d -r^d)) \leq 2 (\gamma -\beta)/( \theta_d (f_1- \eps)).
	\label{e:0427b} 
\end{align}

In the other case with $d=2$ and $A$ polygonal, 
	on choosing a suitable large $K$ (dependent
	on the smallest angle of $A$) we can obtain, similarly
	to (\ref{e:0427c}), that
	\begin{align}
	e^{-\beta} 
		& 	\geq
		e^{n (f_1-\eps) \pi (s^2 -r^2)/2}
		\psi_n^{k-1}
		\int_{A \setminus \Cor^{(Kr)} }
		\sum_{j=0}^{k-1} ((n \nu(B_s(x)))^j/j!)
	e^{-n \nu(B_s(x))} n \nu(dx) 
		\nonumber \\
		& =
		e^{n (f_1-\eps) \pi (s^2 -r^2)/2}
		\psi_n^{k-1}
		\left( e^{-\gamma} -
		\int_{\Cor^{(Kr)}}
		\sum_{j=0}^{k-1} ((n \nu(B_s(x)))^j/j!)
		e^{-n \nu(B_s(x))} n \nu(dx) \right) . 
		\label{e:0427d}
	\end{align}
	%Also there exists $\delta_1 >0$ that
	%for all $n$ large and all $x \in A$ we have
	%$\nu(B_s(x) \setminus B_r(x)) \geq \delta_1(s^2-r^2)$,
	%and hence
	%$$
	%e^{-\beta} \geq \int_{A} e^{n \delta_1 (s^2 -r^2)}
	%e^{-n \nu(B_s(x))} n \nu(dx) = e^{n\delta_1(s^2 -r^2) }
	%e^{-\gamma},
	%$$
	%so that $n(s^2-r^2) \leq (\gamma- \beta)/\delta_1$, so
	%the first factor in the right hand side of (\ref{e:0427d})
	%remains bounded as $n \to \infty$. 
	Since the integral over $\Cor^{(Kr)}$ tends to zero
	by \eqref{e:ballbds},
%	in the second term of the second factor tends to zero,
	we therefore obtain from (\ref{e:0427d})
	that  \eqref{e:0427b} holds in this case too.
%
%we can derive the
	%same conclusion 
	%and integrating over
	%$(\partial A)^{(r)} \setminus \Cor_{Kr}$ instead of
	%$(\partial A)^{(r)}$. In this case (\ref{e:0423b}) and
	%(\ref{e:0423c}) still hold with the integrals restricted
	%to $(\partial A)^{(r)} \setminus \Cor_{Kr}$, since
	%$\int_{\Cor_{Kr}} e^{-n\nu(B_r(x))}n \nu(dx) \to 0$.
%
Taking $\eps \downarrow 0$, we deduce in both cases that
\eqref{e:0427a} holds whenever $f_1 \leq f_0(2-2/d)$.
%\begin{align}
%\limsup_{n \to \infty} (n(s^d-r^d)) \leq \frac{2(\gamma - \beta)}{f_1 \theta_d}.
%	\label{e:0422b}
%\end{align}

Now assume the strict inequality 
$f_1 < f_0(2-2/d)$. Again set $r=r_n(\beta)$, $s= r_n(\gamma)$.
	Using  \eqref{e:limr} from Lemma \ref{l:rfirstorder},
	choose $\delta_2 >0$ such that
\begin{align}
\lim_{n \to \infty} \frac{n \theta_d r^d}{\log n} = \frac{2-2/d}{f_1}
> \frac{1+ 2 \delta_2}{f_0}.
	\label{e:0422c}
\end{align}
Then there exists a constant $c$ such that 
for $n$ large,
\begin{align}
	\int_{A^{(-r)} } \sum_{j=0}^{k-1}
	((n \nu(B_r(x)))^j/j!)
	e^{- n \nu(B_r(x)) } n \nu(dx)
	& \leq c n (\log n)^{k-1}  e^{-n \theta_d f_0 r^d}
\nonumber	\\
	& \leq n c( \log n)^{k-1}
	e^{-(1+\delta_2) \log n},
	\label{e:0422e}
\end{align}
which tends to zero. 

Take a new $\eps >0$. Let $A_\eps := \{x \in A: f(x) \leq f_1 + 3 \eps \}$.
Using Lemma \ref{lemgeom3}-(ii) in the case $\partial A \in C^2$,
take $\delta > 0$ such that for all large enough $n$ and
all $x \in (\partial A)^{(\delta r)}$,
%LATER add the next bit if non convex.
%(or in the non-convex polygonal case, all
%$x \in (\partial A)^{(\delta r)} \setminus \Cor^{(Kr)}$ for
%a suitable choice of $K$ {\bf [It's all $x \in
%\partial A^{(\delta r)}$ for convex polygons!]}) 
we have
$$
|A \cap B_s(x) \setminus  B_r(x)|
\leq \theta_d (s^d -r^d) (f_1 + 5 \eps)/(2(f_1+ 4 \eps)). 
$$
Such $\delta$ can also be found in the other case  where $A$ is a convex polygon.

Then for $n$ large and $x \in (\partial A)^{(\delta r)} \cap A_\eps$ we
have $\sup_{B_s(x) \cap A} f \leq f_1 + 4 \eps$, and hence  
%for $\partial A \in C^2$ or for $A$ a convex polygon
\begin{align}
	\nu(B_s(x) \setminus B_r(x))
	\leq \theta_d (s^d -r^d) (f_1+ 5 \eps)/2,  ~~~~~~ \forall ~ 
x \in (\partial A)^{(\delta r)} \cap A_\eps.
	\label{e:0422d}
\end{align}
%LATER: maybe add the following for non-convex case:
%In the {\bf [non-convex]}
%polygonal case this inequality holds for all
%$x \in A_\eps \cap (\partial A)^{(\delta r)} \setminus \Cor^{(Kr)}$.

Using Lemma \ref{lemgeom1a}-(i),
in the case where $\partial A \in C^2$ we have
for large $n$, and all $x \in A \setminus A_\eps$,
that  $\nu (B_r(x)) \geq ( f_1 + 2 \eps) \theta_dr^d/2$. 
Hence for $n$ large
\begin{align*}
	\int_{(\partial A)^{(r)} \setminus A_\eps}
	\sum_{j=0}^{k-1} ( ( n \nu(B_r(x)))^j/j!)
	e^{-n \nu(B_r(x))} n \nu(dx)
	 \leq c (\log n )^k n r \exp(-n (f_1+2 \eps) \theta_d r^d/2)
	\\
	 \leq c n^{1-1/d} (\log n)^{k+1/d} \exp(-(f_1+ \eps) (1-1/d)
	 (\log n)/f_1)
	\to 0,
\end{align*}
where for the second inequality we have used the equality in (\ref{e:0422c}).
%LATER: maybe add the next bit.
%In the {\bf [non-convex]}
%polygonal case we can draw the same conclusion
%for the integral over $(\partial A)^{(r)} \setminus (A_\eps \cup \Cor^{(Kr)})$.

By Lemma \ref{lemgeom3}-(i),
there is a constant $\delta_1 >0$ such that for $x \in (\partial A)^{(r)} 
\setminus (\partial A)^{(\delta r)}$
(in the case where $\partial A \in C^2$) or for
$x \in (\partial A)^{(r)} \setminus (\partial A)^{(\delta r)}
\setminus \Cor^{(Kr)}$ (in the
case where $A$ is polygonal)
we have $\nu(B_r(x)) \geq 
(f_1 + 2\delta_1) \theta_d r^d/2$. 
%{\bf (need to prove this somewhere)}. 
Thus if $\partial A \in C^2$  then 
\begin{align*}
	\int_{(\partial A)^{(r)} \setminus (\partial A)^{(\delta r)}}
	p_{n,r}(x)
%	\sum_{j=0}^{k-1} ((n \nu(B_r(x)))^j/j!) e^{-n \nu(B_r(x))} 
	n \nu(dx) 
%	\\
	\leq
	c (\log n)^k n r \exp(-n (f_1 + 2 \delta_1) \theta_d r^d/2),
\end{align*}
which tends to zero by \eqref{e:0422c}.
%LATER: maybe add the following
 In the polygonal case we get the same conclusion
using also the fact that the integral over $\Cor^{(Kr)}$ tends to zero.
Combining the last two estimates with
(\ref{e:0422e}) shows that
\begin{align}
\int_{A \setminus ((\partial A)^{(\delta r)} \cap A_\eps) }
	\sum_{j=0}^{k-1} ((n \nu(B_r(x)))^j/j!)
e^{-n \nu(B_r(x))} n \nu(dx) \to 0,
	\label{e:0422f}
\end{align}
and therefore using \eqref{e:Eeq} followed by (\ref{e:0422d}) we have
\begin{align}
	e^{-\beta} & = \lim_{n \to \infty}
	\int_{(\partial A)^{(\delta r)} \cap A_\eps}
	\sum_{j=0}^{k-1} ((n \nu(B_r(x)))^j/j!)
	e^{-n \nu(B_r(x))} n \nu(dx)
\nonumber \\
	& \leq \liminf_{n \to \infty}
	\left( e^{n \theta_d (s^d -r^d) (f_1 + 5 \eps)/2}
	\int_{(\partial A)^{(\delta r)} \cap A_\eps}
	\sum_{j=0}^{k-1} ((n \nu(B_r(x)))^j/j!)
	e^{-n \nu(B_s(x))} n \nu(dx) \right).
	\label{e:0908e}
\end{align}
By \eqref{e:0422f} 
%and \eqref{e:limr}
we have $
\int_{A \setminus ((\partial A)^{(\delta r)} \cap A_\eps) }
	\sum_{j=0}^{k-1} ((n \nu(B_r(x)))^j/j!)
e^{-n \nu(B_s(x))} n \nu(dx) \to 0$, so 
using \eqref{e:limratio} from Lemma \ref{l:rfirstorder}
we have
$$
\int_{(\partial A)^{(\delta r)} \cap A_\eps }
	\sum_{j=0}^{k-1} ((n \nu(B_r(x)))^j/j!)
e^{-n \nu(B_s(x))} n \nu(dx) \to e^{-\gamma},
$$
and hence by \eqref{e:0908e}, 
$$
e^{-\beta} \leq \liminf_{n \to \infty} \left( e^{n \theta_d(s^d-r^d)
(f_1+ 5 \eps)/2} \right) \times e^{-\gamma},
$$
so that 
$$
\liminf_{n \to \infty}(n (s^d -r^d)) \geq 2 (\gamma - \beta)/ (\theta_d
(f_1+ 5 \eps)).
$$
Taking $\eps \downarrow 0$ and combining with (\ref{e:0427a}) shows 
that  (\ref{e:0422g}) holds. 
\end{proof}

\begin{lemma}[De-Poissonization]
	\label{l:dePo}
	Let $\beta \in \R$ and suppose $r_n = r_n(\beta)$ is given by 
	\eqref{e:Eeq} for $n$ sufficiently large. Then
	\begin{align}
		\lim_{n \to \infty} \PP[L_{k}(\cX_n) \leq r_n(\beta)]
		=
	\lim_{n \to \infty} \PP[L_{n,k} \leq r_n(\beta)] = \exp(-e^{-\beta}).
		 \label{e:Lklim}
	\end{align}
\end{lemma}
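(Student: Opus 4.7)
The plan is to handle the two limits in \eqref{e:Lklim} separately. The Poisson limit is essentially immediate: by the Mecke identity \eqref{e:EEF} combined with the defining relation \eqref{e:Eeq} of $r_n(\beta)$, one has $\EE[\xxi_{n,r_n(\beta)}] = e^{-\beta}$ exactly, for all $n \geq n_0(\beta)$. Substituting this into \eqref{e:Lrate} of Proposition \ref{p:poisson2d} yields
\[
\PP[L_{n,k} \leq r_n(\beta)] = \exp(-e^{-\beta}) + O((\log n)^{1-d}),
\]
which tends to $\exp(-e^{-\beta})$ as $n\to\infty$, giving the first of the two limits.

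For the binomial limit I would run the same Stein-method strategy of Section 3 on the binomial process $\cX_n$. Define the binomial counterpart
\[
\xxi^{\mathrm{bin}}_{n,r} := \sum_{x \in \cX_n} \1\{\cX_n(B(x,r)) \leq k\}.
\]
The classical Chen--Stein bound for sums of indicators with local dependence yields the binomial analogue of Lemma \ref{l:Stein}, namely
\[
\dtv\bigl(\xxi^{\mathrm{bin}}_{n,r},\, \PRV_{\EE[\xxi^{\mathrm{bin}}_{n,r}]}\bigr) \leq 3(\tilde I_1(n,r) + \tilde I_2(n,r)),
\]
in which $\tilde I_1, \tilde I_2$ are defined as $I_1,I_2$ in Lemma \ref{l:Stein}, but with the Poisson--Palm probabilities replaced by the conditional probabilities obtained by prescribing one or two of the $n$ sample points to lie at the integration variables. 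Since the intermediate geometric estimates used in the proof of Proposition \ref{p:poisson2d} to bound $I_1, I_2$ are already accompanied by binomial counterparts in Lemmas \ref{l:smallK} and \ref{l:medK}, exactly the same chain of bounds goes through, yielding
\[
\PP[L_k(\cX_n) \leq r_n(\beta)] = \exp\bigl(-\EE[\xxi^{\mathrm{bin}}_{n,r_n(\beta)}]\bigr) + O((\log n)^{1-d}).
\]

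It then remains to verify $\EE[\xxi^{\mathrm{bin}}_{n,r_n(\beta)}] \to e^{-\beta}$. By linearity of expectation,
\[
\EE[\xxi^{\mathrm{bin}}_{n,r_n(\beta)}] = n \int_A \PP[\mathrm{Bin}(n-1, p_x) \leq k-1]\, f(x)\,dx,
\]
with $p_x := \nu(B_{r_n(\beta)}(x))$. By Lemma \ref{l:rlog} together with \eqref{e:ballbds}, uniformly in $x \in A$, one has $np_x = \Theta(\log n)$ and $p_x = O(\log n / n)$, and then the standard comparison of $\mathrm{Bin}(n-1, p_x)$ with $\PRV_{np_x}$ gives $\PP[\mathrm{Bin}(n-1, p_x) \leq k-1] = (1 + O((\log n)^2/n))\, \PP[\PRV_{np_x} \leq k-1]$ uniformly in $x$. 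Integrating and comparing with the Poisson identity $\EE[\xxi_{n,r_n(\beta)}] = e^{-\beta}$ yields the desired convergence, giving the binomial limit. The main technical input beyond routine verifications is the binomial analogue of Lemma \ref{l:Stein}; while the Chen--Stein bound for sums of locally dependent indicators is classical, writing out the two-point binomial Palm distribution explicitly and checking that the geometric estimates of Lemmas \ref{l:smallK} and \ref{l:medK} can be used verbatim is the only genuinely case-specific work.
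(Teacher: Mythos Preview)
Your treatment of the Poisson limit matches the paper exactly. For the binomial limit, however, the paper takes a different route: rather than rerun the Stein-method analysis on $\cX_n$, it uses a coupling/sandwiching argument. Setting $n^\pm := n \pm n^{3/4}$, the paper first verifies $\phi_{n^-,r_n(\beta)} \to e^{-\beta}$ (so that Proposition~\ref{p:poisson2d} yields $\PP[L_{n^-,k}\le r_n(\beta)]\to \exp(-e^{-\beta})$), then couples $\eta_{n^-},\eta_{n^+},\cX_n$ in the standard way and bounds the symmetric difference $\{L_{n^-,k}\le r_n(\beta)\}\triangle\{L_k(\cX_n)\le r_n(\beta)\}$ by three explicit events $E_n,F_n,G_n$ of probability $o(1)$.

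Your direct binomial Chen--Stein approach is valid in principle and would yield the same conclusion. One correction: your pointer to Lemmas~\ref{l:smallK} and~\ref{l:medK} is misplaced---those lemmas concern separating sets (Section~\ref{s:RelateLM}), not the isolated-vertex integrals $I_1,I_2$ of Section~3. What your argument actually needs is (i) a binomial analogue of Lemma~\ref{l:Stein}, which is classical but not stated in the paper, and (ii) the purely geometric inputs (Lemma~\ref{lemgeom1a}(ii), Lemma~\ref{l:poly_diff_ball}, and \eqref{e:ballbds}) already used in the proof of Proposition~\ref{p:poisson2d}, which carry over verbatim since they do not depend on the Poisson structure. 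The paper's coupling route has the advantage of reusing the Poisson result wholesale, avoiding any new Stein computation; your route is more self-contained but duplicates work.
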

\begin{proof}
	The statement about $L_{n,k}$ in \eqref{e:Lklim} follows from
	 Proposition \ref{p:poisson2d}. It remains to prove the statement
	 about $L_k(\cX_n)$.

	 Given $n >0,r >0$ define $\phi_{n,r} := \EE[\xxi_{n,r}]$,
	 i.e. by \eqref{e:EEF},
	 %$I_{n,r}$ (nothing to do with $I_i(n,r)$ in Lemma
	 %\ref{l:Stein} earlier) by
	 \begin{align}
	 \phi_{n,r} := \int_A p_{n,r}(x) n \nu(dx)
	 = \int_A \sum_{j=0}^{k-1} ((n \nu(B_r(x)))^j/j!)
	 \exp(-n \nu(B_r(x)) ) n \nu(dx),
		 \label{e:Inrdef}
	 \end{align}
	 which is decreasing in $r$.
	 Set $n^-:= n- n^{3/4}$,
	  $n^+:= n + n^{3/4}$,
	 and let $\beta \in \R$. Then
	 $$
	 \frac{\phi_{n^-,r_n(\beta)}}{\phi_{n,r_n(\beta)}}\geq \big( \frac{n^{-}}{n}
	 \big)^{k-1}= 1+ O(n^{-1/4}),
	 $$
	 and 
	 $$ \frac{\phi_{n^-,r_n(\beta)}}{\phi_{n,r_n(\beta)}}
	 \leq \exp(n^{3/4} \fmax \theta_d r_n(\beta)^d)
	 =1 + O((\log n) n^{-1/4}),
	 $$
	 so that $\phi_{n^-,r_n(\beta)}/\phi_{n,r_n(\beta)} \to 1$ as $n \to \infty$,
	 and thus $\phi_{n^-,r_n(\beta)} \to e^{-\beta}$ as
	 $n \to \infty$.
	 Therefore 
	 using Proposition
	\ref{p:poisson2d}, we have 
	 \begin{align}
		 \PP[L_{n^-,k} \leq r_n(\beta) ] \to \exp (-e^{-\beta}).
		 \label{e:n-vn}
	 \end{align}

	 Now, following the proof of \cite[Theorem 8.1]{Pen},
	 we note that with $\eta_{n^-}$, $\eta_{n^+}$ and $\cX_n$ coupled
	 in the usual way (as described in \cite{Pen}), we have
	 $$
	 \{L_{n^-,k} \leq r_n(\beta) \} \triangle 
	 \{L_k(\cX_n) \leq r_n(\beta) \} \subset E_n \cup F_n \cup G_n
	 $$
	 where, setting
	 % $n^+: = n+ n^{3/4}$, and
	  $N_t = \eta_t(A)$ for all $t$,
	  we set
	 \begin{align*}
		 E_n &:= \{ \exists x \in \eta_{n^+} \setminus \eta_{n^-}:
		 \eta_{n^-}(B_{r_n(\beta)}(x)) \leq k-1\};
		 \\
		 F_n  &:= \{ \exists
		 x \in \eta_{n^-},
		 y \in \eta_{n^+} \setminus \eta_{n^-}
		 : \eta_{n^-}(B_{r_n(\beta)}(x)) \leq k,
		 \| y -x \| \leq r_n(\beta)\};
		 \\
		 G_n  &:= \{ N_{n^-} \leq n \leq N_{n^+} \}^c.
	 \end{align*}

	 By Chebyshev's inequality $\PP[G_n] = O(n^{-1/2})$.
	 By Markov's inequality,
	 \begin{align*}
		 \PP[E_n] & \leq 2n^{3/4} \int_A \PP[\Po_{n^-}(B_{r_n(\beta)}(x))
	 \leq k-1] \nu(dx) \\
		 & = (2 n^{3/4}/n^-)
		 \phi_{n^-,r_n(\beta)},
	 \end{align*}
	 which tends to zero.
	 Finally, by the Mecke formula,
	 \begin{align*}
		 \PP[F_n] & \leq 2n^{3/4} \fmax \theta_d r_n(\beta)^d 
		 \int_A \PP[\Po_{n^-}(B_{r_n(\beta)}(x))
	 \leq k-1] n^- \nu(dx) \\
		  & = 2 \fmax \theta_d  n^{3/4} r_n(\beta)^d
		 \phi_{n^-,r_n(\beta)}
		 %\\
		  = O((\log n)n^{-1/4}). 
	 \end{align*}
	 Therefore using \eqref{e:n-vn} we obtain
	 \eqref{e:Lklim}.
	 %that
	 %\begin{align}
	 %\PP[L_k(\cX_n) \leq r_n(\beta) ] \to  \exp(- e^{-\beta}).
	%	 \label{e:Lklim}
	% \end{align}
\end{proof}

\begin{proof}[Proof of Theorem \ref{t:nonunif} parts (i) and (ii)]
	For part (i) we assume $f_1 > f_0(2- 2/d)$; in this
	case set $\alpha = 1/(\theta_d f_0)$.
	For part (ii) we assume $f_1 < f_0(2- 2/d)$; in this
	case set $\alpha = 2/(\theta_d f_1)$.

%Combining
	By Lemma \ref{l:betas} in the first case, or by
	Lemma \ref{l:bdybetas} in the second case,
	 for all $\beta,\gamma \in \R$ with
	$\beta < \gamma$ the
	condition \eqref{e:0422a2} holds.
	
	Let $\beta \in \R$ and suppose $r_n = r_n(\beta)$ is given by 
	(\ref{e:Eeq}) for $n$ sufficiently large.  
	By Lemma \ref{l:dePo},
%	Proposition \ref{p:poisson2d},
	$\PP[L_{n,k} \leq r_n(\beta)] \to F(\beta)$
	and $\PP[L_{k}(\cX_n) \leq r_n(\beta)] \to F(\beta)$,
	where we set $F(x) := \exp(-e^{-x})$.
	By Proposition \ref{p:L<M},
	$\PP[M_{n,k} \leq  r_n(\beta)] \to F(\beta)$ as $n \to \infty$.

	Then by Lemma \ref{l:Gum} (taking $X_n = M_{n,k}$), we obtain 
	 that
	 $n M_{n,k}^d - n \mu(M_{n,k})^d \tod \alpha(\Gum + \log \log 2),
	 $
	 i.e. \eqref{e:limnonu1} holds if $f_1 > f_0(2-2/d)$,
	 and \eqref{e:limnonu5} holds if $f_1 < f_0(2-2/d)$.
	 Also by taking $X_{n}= L_{n,k}$ in Lemma \ref{l:Gum} we obtain
	 \eqref{e:limnonu3} if $f_1 > f_0(2-2/d)$, and
	 \eqref{e:limnonu7} if $f_1 < f_0(2-2/d)$.
	 %Lemma \ref{l:Gum}, we have that
	 %$n L_{n,k}^d - n \mu_{n,k}^d \tod \alpha(\Gum + \log (\log 2)).
	 %$

	 It remains to demonstrate the results for the binomial model,
	 i.e. \eqref{e:limnonu2},
	 \eqref{e:limnonu4}, \eqref{e:limnonu6}
	 and \eqref{e:limnonu8}.
	 By Lemma \ref{l:dePo}, we can
	 use Lemma \ref{l:Gum} (now taking $X_n = L_{k} (\cX_n)$)
	 to deduce  that \eqref{e:limnonu4} holds if
	 $f_1 > f_0(2-2/d)$, and \eqref{e:limnonu8} holds
	 if  
	 $f_1 < f_0(2-2/d)$.

	 Using \eqref{e:Lklim}, and \eqref{e:0414abin} from Proposition
	\ref{p:L<M}, we obtain that
	 $$
	 \PP[M_k(\cX_n) \leq r_n(\beta) ] \to  \exp(- e^{-\beta}).
	 $$
	 Then using Lemma \ref{l:Gum} (now taking $X_n = M_{k}(\cX_n)$)
	 we can deduce that  \eqref{e:limnonu2} holds
	if $f_1 > f_0(2-2/d)$, and \eqref{e:limnonu6} holds
	if $f_1 < f_0(2-2/d)$.
\end{proof}

\subsection{Proof of Theorem \ref{t:nonunif}: conclusion}

It remains to prove part (iii) of Theorem \ref{t:nonunif}. We deal
first with the assertions there concerning
 tightness. Again in the next proof, set $F(x):= \exp(-e^{-x})$, $x \in \R$.
\begin{lemma}
	\label{l:tight}
	The collection of random variables 
	$\{nM_{n,k}^d - n \mu(M_{n,k})^d\}_{n \geq 1}$
	is tight.
	So is the collection of random variables 
	$\{nL_{n,k}^d - n \mu(L_{n,k})^d\}_{n \geq 1}$,
	and also the sequence $( nM_{k}(\cX_n)^d - n \mu(M_{k}(\cX_n))^d)_{n
	\in \NN}$, 
	and the
	sequence $( nL_{k}(\cX_n)^d - n \mu(L_{k}(\cX_n))^d)_{n \in \NN}$.
\end{lemma}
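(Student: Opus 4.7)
My plan is to deduce tightness of $nM_{n,k}^d - n\mu(M_{n,k})^d$ from tightness of the same statistic relative to the deterministic centering $a_n := n r_n(0)^d$, where $r_n(\beta)$ is defined by \eqref{e:Eeq}. This reduction is standard: a tight sequence has bounded medians, so once $nM_{n,k}^d - a_n$ is shown tight, the deterministic sequence $n\mu(M_{n,k})^d - a_n$ is bounded, and $nM_{n,k}^d - n\mu(M_{n,k})^d$ is tight by subtraction.

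To show tightness of $nM_{n,k}^d - nr_n(0)^d$, I will sandwich it between two quantile levels. Given $\eps > 0$, pick $\beta < 0 < \gamma$ with $\exp(-e^{-\beta}) < \eps/3$ and $\exp(-e^{-\gamma}) > 1 - \eps/3$. Proposition \ref{p:poisson2d} supplies $\PP[L_{n,k} \le r_n(\beta)] \to \exp(-e^{-\beta})$ (and similarly at $\gamma$), and Proposition \ref{p:L<M} transfers both limits to $M_{n,k}$; hence for $n$ large, $\PP[r_n(\beta) < M_{n,k} \le r_n(\gamma)] > 1 - \eps$, and on this event
\[
nM_{n,k}^d - n r_n(0)^d \in [\,n(r_n(\beta)^d - r_n(0)^d),\ n(r_n(\gamma)^d - r_n(0)^d)\,].
\]
Tightness therefore reduces to a uniform-in-$n$ two-sided bound on $n(r_n(\gamma)^d - r_n(\beta)^d)$ for each fixed $\beta < \gamma$.

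The upper bound $\limsup_n n(r_n(\gamma)^d - r_n(\beta)^d) \le 2(\gamma - \beta)/(\theta_d f_1)$ will be supplied by \eqref{e:0427a} of Lemma \ref{l:bdybetas}, which is valid throughout $f_1 \le f_0(2-2/d)$ (covering the boundary case); for $f_1 > f_0(2-2/d)$ Lemma \ref{l:betas} gives an even sharper bound. The main obstacle will be the matching positive lower bound, because Lemma \ref{l:betas}'s dismissal of the boundary contribution to $\EE[\xxi_{n,r}]$ breaks down once $f_1 \le f_0(2-2/d)$. I will sidestep this via an elementary Poisson-tail manipulation that uses only a universal bound on $\nu$-measures of thin annular shells: write $p_{n,r}(x) = Q(n\nu(B_r(x)))$ with $Q(\lambda) := \PP[\mathrm{Po}(\lambda) \le k-1]$; the termwise inequality $(\lambda + h)^j \ge \lambda^j$ for $h \ge 0$ yields $Q(\lambda + h)/Q(\lambda) \ge e^{-h}$. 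Combined with $n\nu(B_s(x)) - n\nu(B_r(x)) \le n\fmax \theta_d(s^d - r^d)$ and integration using \eqref{e:Eeq}, this gives, with $r := r_n(\beta)$ and $s := r_n(\gamma)$,
\[
e^{-\gamma} = \int p_{n,s}(x)\, n\nu(dx) \ge e^{-n\fmax \theta_d(s^d - r^d)}\, e^{-\beta},
\]
whence $n(r_n(\gamma)^d - r_n(\beta)^d) \ge (\gamma - \beta)/(\theta_d \fmax)$ for all large $n$. This closes the sandwich argument and gives tightness.

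The remaining three families will be handled by exactly the same sandwich: Proposition \ref{p:poisson2d} covers $L_{n,k}$ directly, while Lemma \ref{l:dePo} (together with Proposition \ref{p:L<M} in the $M_k(\cX_n)$ case) transfers the same $\exp(-e^{-\beta})$-limits to $L_k(\cX_n)$ and $M_k(\cX_n)$, so the estimates above apply verbatim.
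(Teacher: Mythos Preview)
Your proof is correct and follows the same sandwich strategy as the paper (trap the variable between $r_n(\beta)$ and $r_n(\gamma)$ via Propositions~\ref{p:poisson2d}, \ref{p:L<M} and Lemma~\ref{l:dePo}, then invoke the $\limsup$ bounds of Lemmas~\ref{l:betas}--\ref{l:bdybetas}). One minor redundancy: your lower-bound step $n(s^d-r^d)\ge(\gamma-\beta)/(\theta_d\fmax)$ via $Q(\lambda+h)\ge e^{-h}Q(\lambda)$, while correct, is not needed---both endpoints of your interval $[n(r_n(\beta)^d-r_n(0)^d),\,n(r_n(\gamma)^d-r_n(0)^d)]$ are already controlled by applying the \emph{upper} bound alone (once to the pair $(0,\gamma)$ and once to $(\beta,0)$); the paper does exactly this, directly trapping the median between $r_n(\beta)$ and $r_n(\gamma)$ and using only the $\limsup$ estimate.
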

\begin{proof}
Let $\eps \in (0,1/6)$.
Choose $\beta < \beta'$ with $F(\beta) < \eps/3$ and $F(\beta') > 1- \eps/3$.
Set $r_n = r_n(\beta)$, $s_n = r_n(\beta')$
as given by (\ref{e:Eeq}).
	By Lemma \ref{l:bdybetas}
%	(\ref{e:0423a})
	there
exists a constant $K$ such that $n(s_n^d -r_n^d) \leq K$ for all large
enough $n$.  By 
	Proposition \ref{p:poisson2d},
	$\PP[L_{n,k} \leq r_n(\beta)] \to
	F(\beta)$. 
	By Proposition \ref{p:L<M},
	$\PP[M_{n,k} \leq r_n  ] \to F(\beta)$ as $n \to \infty$. 
	Similarly,
	$\PP[L_{n,k} \leq   s_n] \to F(\beta')$
	and
	$\PP[M_{n,k} \leq   s_n] \to F(\beta')$
	as $n \to \infty$. 
	Therefore since $F(\beta) < 1/2 < F(\beta')$, we have
	$r_n \leq  \mu(L_{n,k})  < s_n$
	and 
	$r_n \leq  \mu(M_{n,k})  < s_n$
	for $ n$ large.
	Then for $n$ large
	\begin{align*}
		\PP[n M_{n,k}^d \leq n \mu(M_{n,k})^d -K ]
		& \leq \PP[ n M_{n,k}^d \leq n s_n^d - K]
		\\
		& \leq \PP[M_{n,k} \leq r_n] < \eps/2,
	\end{align*}
	and likewise for $L_{n,k}$.
	Similarly for $n$ large
	\begin{align*}
		\PP[n M_{n,k}^d > n \mu(M_{n,k})^d +K ]
		& \leq \PP[ n M_{n,k}^d > n r_n^d + K]
		\\
		& \leq \PP[M_{n,k} > s_n] < \eps/2,
	\end{align*}
	and likewise for $L_{n,k}$.
	Thus $\PP[|n(M_{n,k}^d- \mu(M_{n,k})^d)| > K] \leq \eps$
	and $\PP[|n(L_{n,k}^d- \mu(L_{n,k})^d)| > K] \leq \eps$
	for all large enough $n$, 
	Also $\{ n(M_{n,k}^d-\mu(M_{n,k}^d))\}_{1 \leq  n \leq n_0}$
	and $\{ n(L_{n,k}^d-\mu(L_{n,k}^d))\}_{1 \leq n \leq n_0}$
	are uniformly bounded for any fixed $n_0 \in (0,\infty)$.
	This yields the asserted tightness of $(M_{n,k})_{n \geq 1}$
	and of $(L_{n,k})_{n \geq 1}$.

	The proof of tightness for $L_{k}(\cX_n)$ and
	of $M_k(\cX_n)$ is similar,
	except that instead of 
	Proposition \ref{p:poisson2d} we use \eqref{e:Lklim}.
	 Proposition \ref{p:L<M} still applies in the binomial setting.
\end{proof}

To prove \eqref{e:LM} we shall adapt
the `squeezing argument' from \cite{Pen97}.
For $- \infty < \beta < \gamma < \infty$ we define
the random variable
\begin{align}
U_n(\beta,\gamma) := \sum_{x \in \cX_n}
		\1 \{ \cX_n ( B_{r_n(\beta)}(x) ) \leq k , 
		\cX_n(  B_{r_n(\gamma)}(x) \setminus
		B_{r_n(\beta)}(x)) \geq 2  \}.
		\label{e:defunbg}
\end{align}
	\begin{lemma}
		\label{l:squeeze}
	Let $K >0$.
		Then there is a constant $c \in (0,\infty)$ such
		that for all $\beta, \gamma \in \R$ with
		 $-K \leq \beta < \gamma \leq K$,
	 $$
		\limsup_{n \to \infty}
		\PP[ U_n(\beta,\gamma) \geq 1 ] \leq c (\gamma-\beta)^2. 
	$$

\end{lemma}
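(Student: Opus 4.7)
The plan is to apply Markov's inequality, compute $\EE[U_n(\beta,\gamma)]$ as an integral, and extract the factor $(\gamma-\beta)^2$ using the defining identity \eqref{e:Eeq} together with the growth estimates in Lemmas \ref{l:betas} and \ref{l:bdybetas}. Writing $r:=r_n(\beta)$, $s:=r_n(\gamma)$, $p(x):=\nu(B_r(x))$ and $q(x):=\nu(B_s(x)\setminus B_r(x))$, and rewriting the sum over $\cX_n$ as a single-point integral, we get
\begin{align*}
	\EE[U_n(\beta,\gamma)] = n \int_A \PP\bigl[N_1(x)\le k-1,\ N_2(x)\ge 2\bigr]\,\nu(dx),
\end{align*}
where $N_1(x):=\cX_{n-1}(B_r(x))$ and $N_2(x):=\cX_{n-1}(B_s(x)\setminus B_r(x))$; the threshold becomes $k-1$ because the representative vertex itself lies in $B_r(x)$.

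I will control the integrand by conditioning on $N_1(x)=j$: the conditional distribution of $N_2(x)$ is $\mathrm{Binomial}(n-1-j,q(x)/(1-p(x)))$, and since $p(x)\to 0$ uniformly (by \eqref{e:ballbds} and $r\to 0$), a second-moment bound yields $\PP[N_2(x)\ge 2\mid N_1(x)=j]\le 2(nq(x))^2$ for all $j\le k-1$ and $n$ large. A standard binomial-to-Poisson comparison (as in Lemma \ref{l:smallK}) gives $\PP[N_1(x)\le k-1]\le C\,p_{n,r}(x)$, leading to
\begin{align*}
	\EE[U_n(\beta,\gamma)]\le C'\int_A np_{n,r}(x)\,(nq(x))^2\,\nu(dx).
\end{align*}

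The crux is extracting two factors of $(\gamma-\beta)$. For one factor, the crude bound $q(x)\le \fmax\theta_d(s^d-r^d)$ together with Lemmas \ref{l:betas} and \ref{l:bdybetas} (and the one-sided estimate \eqref{e:0427a} in the borderline case $f_1=f_0(2-2/d)$) yields $\limsup_n nq(x)\le D(\gamma-\beta)$ uniformly in $x$, for $D$ depending on $\max(1/(\theta_df_0),2/(\theta_df_1))$ and $\fmax$. For the second factor, coupling independent Poissons $P_1\sim\mathrm{Poi}(np(x))$ and $P_2\sim\mathrm{Poi}(nq(x))$ gives
\begin{align*}
	p_{n,r}(x)-p_{n,s}(x) \ge \PP[P_1=k-1]\PP[P_2\ge 1] \ge \frac{p_{n,r}(x)}{k}\cdot\frac{nq(x)}{1+nq(x)},
\end{align*}
where the last step uses $1-e^{-\lambda}\ge\lambda/(1+\lambda)$ together with the fact that for $np(x)\ge k-1$ (which holds for all $x\in A$ and $n$ large, by \eqref{e:ballbds} and Lemma \ref{l:rfirstorder}), the $j=k-1$ term dominates $p_{n,r}(x)$ up to a factor of $k$. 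Using the uniform bound on $nq(x)$, this rearranges to $p_{n,r}(x)\cdot nq(x)\le C''(p_{n,r}(x)-p_{n,s}(x))$; integrating against $n\,\nu(dx)$ and applying \eqref{e:Eeq} gives $\int n p_{n,r}(x)\,nq(x)\,\nu(dx)\le C''(e^{-\beta}-e^{-\gamma})\le C''e^K(\gamma-\beta)$. Combining the two factors yields $\limsup_n \EE[U_n(\beta,\gamma)]\le c(\gamma-\beta)^2$.

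The main obstacle is extracting the second factor of $(\gamma-\beta)$: a naive use of the uniform bound on $nq(x)$ would only yield order $(\gamma-\beta)$. The key insight is the sharp comparison $p_{n,r}(x)-p_{n,s}(x)\ge c\,p_{n,r}(x)\,nq(x)$, valid because $np(x)\to\infty$ uniformly so that $p_{n,r}(x)$ is comparable to its single largest term. A secondary technical point is that in the borderline case only the $\limsup$ bound from \eqref{e:0427a} is available, but this suffices since the conclusion of the lemma is itself stated as a $\limsup$.
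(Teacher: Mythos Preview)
Your argument is correct, but the detour through $p_{n,r}(x)-p_{n,s}(x)$ is unnecessary, and your stated ``main obstacle'' is not actually an obstacle. Once you have
\[
\EE[U_n(\beta,\gamma)]\le C'\int_A n\,p_{n,r}(x)\,(nq(x))^2\,\nu(dx),
\]
you can pull out \emph{both} factors of $nq(x)$ via the uniform bound $nq(x)\le n\fmax\theta_d(s^d-r^d)$, leaving precisely $\int_A n\,p_{n,r}(x)\,\nu(dx)=e^{-\beta}$ by \eqref{e:Eeq}. Then Lemmas \ref{l:betas} and \ref{l:bdybetas} give $\limsup_n n\fmax\theta_d(s^d-r^d)\le D(\gamma-\beta)$, and you are done with $\limsup_n\EE[U_n]\le D^2 e^{K}(\gamma-\beta)^2$. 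This is exactly the paper's route: it factors out $(n\sup_x q(x))^2$ in one stroke and never touches $p_{n,s}$.

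Your coupling inequality $p_{n,r}(x)-p_{n,s}(x)\ge (p_{n,r}(x)/k)\cdot nq(x)/(1+nq(x))$ is valid and elegant, and the resulting bound $\int n\,p_{n,r}(x)\,nq(x)\,\nu(dx)\le C''(e^{-\beta}-e^{-\gamma})$ is a nice identity in its own right. But it buys nothing here beyond what the crude sup bound already gives, since $(nq(x))^2$ manifestly carries two factors of $(\gamma-\beta)$, not one.
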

	\begin{proof} Set $r_n:= r_n(\beta)$, and
		$s_n := r_n(\gamma)$. By the union bound,
		\begin{align}
	\PP[U_n(\beta,\gamma) \geq 1]
		%	& 
			\leq
			%\EE[U_n(\beta,\gamma)]
		%	\nonumber
	%\\ & =
			n \int_A \PP[ \cX_{n-1} ( B_{r_n}(x))
			< k ,  \cX_{n-1} ( B_{s_n}(x) \setminus B_{r_n}(x) )
			\geq 2] \nu(dx).
			%\nonumber \\
			%& 
			%\leq \sup_{x \in A}
			%(n \nu(B_{s_n}(x) \setminus B_{r_n}(x)))^2
			%\times \int_{A}
			%\sum_{j=0}^{k-1}
			%((n \nu(B_{r_n}(x) ))^j/j!)
			%e^{-n \nu(B_{r_n}(x))} n\nu(dx)
	%\nonumber \\
	%& = \sup_{x \in A} (n \nu(B_{s_n}(x) \setminus B_{r_n}(x)))^2
	%		\times e^{-\beta}.
			\label{e:annulus}
\end{align}

		Let $x \in A$ and set $Y := \cX_{n-1}(B_{r_n}(x))$,
		$Z : = \cX_{n-1}(B_{s_n}(x) \setminus
		B_{r_n}(x))$.
		Also set $v_n(x):= \nu(B_{r_n}(x))$ and
		 $w_n(x):= \nu(B_{s_n}(x) )$. Then
		\begin{align*}
			\PP[Y < k] & = \sum_{j=0}^{k-1}
			\binom{n-1}{j} v_n(x)^j(1-v_n(x))^{n-1-j}
			\\
			&	\leq (1-v_n(x))^{-k} \sum_{j=0}^{k-1}
			((n v_n(x))^j/j!)  (1-v_n(x))^n
			\\
			& \leq (1- \fmax \theta_d r_n^d)^{-k} 
			 \sum_{j=0}^{k-1}
			((n v_n(x))^j/j!)  e^{-nv_n(x)}.
		\end{align*}
Also, using the fact that $\PP[Z \geq 2|Y=j]$ is nonincreasing in $j$,
and  the fact that for any binomial
		 random variable $W$ with mean $\alpha$
		 we have $\EE[W(W-1)] \leq \alpha^2$,
		 we have
		\begin{align*}
			\PP[Z \geq 2|Y <k] & \leq
			\PP[ Z \geq 2|Y=0]
			\\
			& \leq (1/2) \EE[Z(Z-1)|Y=0]
			\\
			& \leq (1/2)  n^2 ((w_n(x)-v_n(x))/(1-v_n(x)))^2
			\\
			& \leq (1/2)  n^2 (1- \fmax \theta_d  r_n(\beta)^d)^{-2}
			(w_n(x) -v_n(x))^2.
			%(\nu(B_{s_n}(x) )
			%-\nu(B_{r_n}(x) )^2.
		\end{align*}
		If $n$ is taken to be large enough we have
		$(1- \fmax \theta_d r_n(\beta)^d)^{-k-2} \leq 2$, and
		hence 
		using \eqref{e:annulus} 
		followed by \eqref{e:Eeq},
		we have
		\begin{align}
	\PP[U_n(\beta,\gamma) \geq 1]
			& \leq n^3
			\sup_{y \in A} (w_n(y) - v_n(y))^2
			\int_A \sum_{j=0}^{k-1}
			((nv_n(x))^j/j!) e^{-n v_n(x)}
			\nu(dx) \nonumber \\
			& 
			=
			\big( n \sup_{y \in A} (w_n(y)-v_n(y)) \big)^2
			e^{-\beta} .
			\label{e:anninter}
		\end{align}
		
		By Lemmas \ref{l:betas} and \ref{l:bdybetas},
		\begin{align}
			\limsup_{n \to \infty}
			n \theta_d (s_n^d- r_n^d) \leq \left( \frac{1}{f_0}
			\vee \frac{2}{f_1} \right)
			%(f_0 \delta_2)^{-1}
			(\gamma-\beta),
			\label{e:0423a}
		\end{align}
		where $\vee$ denotes maximum, and hence 
		\begin{align*}
			\limsup_{n \to \infty}
			\sup_{x \in A}
			%n \nu(B_{s_n}(x) \setminus B_{r_n}(x))
			n (w_n(x) - v_{n}(x))
			\leq \limsup_{n \to \infty}
			n \fmax \theta_d(s_n^d - r_n^d) \leq
		\fmax \left( \frac{1}{f_0} \vee \frac{2}{f_1}\right)
	 (\gamma - \beta),
		\end{align*}
		so by (\ref{e:anninter}) we have
		$\limsup_{n \to \infty}
		\PP[U_n(\beta,\gamma) \geq 1] \leq  e^K (\fmax
		(\frac{1}{f_0} \vee \frac{2}{f_1}))^2 (\gamma -\beta)^2$.
\end{proof}

\begin{proof}[Proof of Theorem \ref{t:nonunif} (conclusion)]
	
	It remains to prove part (iii), and  by Lemma \ref{l:tight} it
	remains only to prove
	\eqref{e:LM}.
	Let $\eps > 0$. Choose  $K >0$
	such that $\exp(- e^{-K}) > 1- \eps$, and also $\exp(-e^{K}) < \eps$.
	Then let
	$c$ be as in Lemma \ref{l:squeeze}.
	Choose
	$\beta_0 < \ldots < \beta_m$ with $\beta_0=-K$ and $\beta_m= K$ 
	such that
	$c \sum_{i=1}^m (\beta_i - \beta_{i-1})^2 < \eps$.
	Write $L'_{n,k}$ for $L_k(\cX_n)$ and $M'_{n,k}$
	for $M_k(\cX_n)$.
	Since $L'_{n,k} \leq M'_{n,k}$, by the union bound
	\begin{align*}
		\PP[L'_{n,k} \neq M'_{n,k}] & = \PP[L'_{n,k} < M'_{n,k}]
		\\
		& \leq  \PP[L'_{n,k} \leq r_n(\beta_0)]
		+ \PP[ L'_{n,k} > r_n(\beta_m)] + 
		\sum_{i=1}^m \PP[L'_{n,k} \leq r_n(\beta_i) < M'_{n,k} ]
		\\
		& + \sum_{i=1}^m \PP[r_n(\beta_{i-1}) < L'_{n,k} <
		M'_{n,k} \leq r_n(\beta_i)]. 
	\end{align*}
	Using Lemma \ref{l:dePo}
	%for the first two terms,
	%we have $\PP[L'_{n,k} \leq r_n(\beta)] \to
	%\exp(- e^{-\beta})$ as $n \to \infty$.
	%Thus using also
	and Proposition \ref{p:L<M},
	%for the last sum, 
	we obtain that
	\begin{align*}
		\limsup_{n \to \infty} \PP[L'_{n,k} \neq M'_{n,k}] \leq
		2 \eps +
		 \sum_{i=1}^m \limsup_{n \to \infty}
		 \PP[r_n(\beta_{i-1}) < L'_{n,k} < M'_{n,k}
		 \leq r_n(\beta_i)]. 
	\end{align*}
	Suppose $\beta < \gamma$, and suppose
	 $r_n(\beta) < L'_{n,k} < M'_{n,k} \leq r_n(\gamma)$ and
	 all inter-point
	 distances in $\cX_n$ are distinct (the latter condition holds
	 almost surely).

	 Then there exist $x,y \in \cX_n$ with
	 $\|x-y\| =  M'_{n,k}$, and it is possible to remove $k$
	 vertices from $G(\cX_n,M'_{n,k})$ leaving the resulting
	 graph connected, but disconnected if the edge $\{x,y\}$
	 is also removed. Removing the same set of vertices
	 from
	 $G(\cX_n,r_n(\beta))$ leaves $x$ and $y$ in distinct
	 components, and if also for some fixed $\rho >0$,
	 events $H_{r_n(\beta),\rho}(\cX_n)$ (defined in Lemma \ref{l:2large})
	 and $J_{r_n(\beta),\rho}(\cX_n)$ (defined in the proof of
	 Proposition \ref{p:L<M}),
	 %$$
	 %J_{\rho,n}
	 %:= \{\exists w \in \cX_n,  \cY \in \cC_{r_n(\beta)}(w,\cX_n), 0
	 %< \diam (\cY) <\rho r_n(\beta) \}
	 %$$
	 fail to occur, then  $x$ or $y$ must have at most
	 $k-1$ other points of $\cX_n$  within distance
	 $r_n(\beta)$. But 
	 $\cX_n(B_{r_n(\gamma)}(x)\setminus \{x\}) \geq k+1$ 
	 since $L'_{n,k} < \|y-x\| \leq r_n(\gamma)$,
	 and similarly
	 $\cX_n(B_{r_n(\beta)}(y)\setminus \{y\}) \geq k+1$.
	 Thus we have the event inclusion
	 \begin{align*}
		 \{r_n(\beta) < L'_{n,k} < M'_{n,k} \leq r_n(\gamma) \}
		 \subset
		 H_{r_n(\beta),\rho}(\cX_n) \cup J_{r_n(\beta),\rho}(\cX_n) 
		 \cup U_n(\beta,\gamma),
	 \end{align*}
	 where $U_n(\beta,\gamma) $ was defined at
	 \eqref{e:defunbg}.

	 By Lemma \ref{l:2large}, we can choose $\rho $ so that
	 $\PP[H_{r_n(\beta),\rho}(\cX_n)] \to 0$,
	 and by  the proof of Proposition \ref{p:L<M}
	 %LATER:
	 %({\bf binomial version - can we arrange to refer to
	 %a lemma rather than a proof?}),
	 $\PP[J_{r_n(\beta),\rho}(\cX_n)] \to 0$. Therefore
	 using Lemma \ref{l:squeeze}
	 we obtain that
	 \begin{align*}
		 \limsup_{n \to \infty} \PP[ r_n(\beta ) < L'_{n,k} < M'_{n,k}
		 \leq r_n(\gamma) ] \leq \limsup_{n \to \infty}
		 \PP[ U_n(\beta,\gamma) \geq 1] \leq c (\gamma - \beta)^2.
	 \end{align*}
	 Thus
	 \begin{align*}
		 \limsup_{n \to \infty}
		 \PP[L'_{n,k} \neq M'_{n,k}] \leq 2 \eps + \sum_{i=1}^m c
		 (\beta_i - \beta_{i-1})^2 < 3 \eps,
	 \end{align*}
	 and since $\eps >0$ is arbitrary this gives us (\ref{e:LM}).
\end{proof}

\section{Proof of Theorem \ref{t:smooth}}
Now we specialise to the uniform case. We make the same
assumptions on $A$ as in the previous section, but now we take 
$ f \equiv f_0 {\bf 1}_A$ with $f_0 = | A|^{-1}$.
%on $A$.
Recall from \eqref{e:defcd} the definition
%Given $d,k$, define the constant
\begin{align}
c_{d,k} := \theta_{d-1}^{-1}  \theta_d^{1-1/d} (2- 2/d)^{k-2 + 1/d} 2^{1-k}
/(k-1)!
\label{e:defcdk2}
	\end{align}

Given $k \in \NN$ and
$\beta \in \R$, let $r_n=r_n(\beta)\ge 0$ be defined for all $n >0$
by
\begin{equation}
	f_0 n\theta_d r_n^d = \max \big( (2-2/d) \log n +
        ( 2k -4 +2/d ) \1_{\{d\ge 3
	\text{ or } k\ge 2\}} \log\log n + \beta, 0 \big).
        %~~~ n\ge n_0,
        \label{e:rboth}
\end{equation}
%where $n_0$ is chosen sufficiently large that the
%right hand side of (\ref{e:rboth}) is positive for
%all $n \geq n_0$.

%\subsection{Convergence of $\EE[F_{n,k}]$ for $d=2$}

We now show the convergence of $\EE[\xxi_{n,r_n}]$
	(with $\xxi_{n,r}$ defined at \eqref{e:def_isoVer}).
%which justifies the choice of $r_n$ at (\ref{e:rboth}).
That is, we show that this choice of $r_n$ 
satisfies (\ref{e:Econv}) for appropriate $\beta'$.
	Recall the definition of the
	isoperimetric ratio $\sigma_A$ at \eqref{e:defsigA}. 

\begin{proposition}[convergence of the expectation in the uniform case
	with $d=2$]
	\label{p:average2d}
	Suppose $f \equiv f_0{\bf 1}_A$, with $d=2$ and either $A$ compact
	with $C^2$ boundary, or $A$ polygonal.
	Fix $k \in \NN$, $\be\in\RR$,
	and let $r_n, \xxi_{n,r}$ be as given in 
	\eqref{e:rboth}
	and
	\eqref{e:def_isoVer}.
	Then as $n \to \infty$,
	\begin{align}
		\EE[\xxi_{n,r_n}] =  
		\begin{cases}
			e^{-\beta} + \sigma_A e^{-\beta/2} \frac{\sqrt{\pi}}{2} 
			(\log n)^{-1/2}
			%(\frac{1}{\sqrt{\log n}} 
			+ O((\log n)^{-3/2})  & \mbox{ if } k= 1 \\
			e^{-\beta} + \sigma_A e^{-\beta/2}
			\frac{\sqrt{\pi}}{4} \Big(1 +
			%|\partial A|e^{-\beta/2} \frac{\sqrt{f_0\pi}}{2}(
			\frac{\log\log n}{2 \log n} \Big)
			+ \frac{e^{-\beta} \log \log n}{\log n} 
			+ O((\log n)^{-1})  & \mbox{ if } k=2 \\
			\sigma_A e^{-\beta/2} \frac{\sqrt{\pi}}{(k-1)!2^k}
			\left(1+\frac{(2k-3)^2\log\log n 
			%+ (2k-3)\beta + \frac{\pi}{2} + 2k -2
			}{2 \log n}  
			\right)
			%O((\frac{\log \log  n}{\log n})^{2}) 
			+O\big((\log n)^{-1} \big)
			& \mbox{ if } k\ge 3.
		\end{cases}
%		e^{-\be}|A|? +  ?O((\log n)^{-1/2})?.
		\label{e:EFlim}
	\end{align}
\end{proposition}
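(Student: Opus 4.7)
The plan is based on the Mecke identity \eqref{e:EEF}: with $\nu(dx)=f_0\,dx$ on $A$,
\[
\EE[\xxi_{n,r_n}] = n f_0 \int_A \Phi_k\bigl(n f_0 |B_{r_n}(x)\cap A|\bigr)\,dx,\qquad \Phi_k(\lambda):=\sum_{j=0}^{k-1}\frac{\lambda^j}{j!}e^{-\lambda}.
\]
Set $L_n := n f_0\pi r_n^2 = \log n + \1_{\{k\ge 2\}}(2k-3)\log\log n + \beta$. I would split the integral into an interior part $I_n$ over $A^{(-r_n)}$ and a boundary part $B_n$ over $A\cap(\partial A)^{(r_n)}$. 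On the interior $|B_{r_n}(x)\cap A|=\pi r_n^2$, so $I_n = n f_0 |A^{(-r_n)}|\Phi_k(L_n)$; using the tube formula $|A|-|A^{(-r_n)}|=r_n|\partial A|+O(r_n^2)$ (Weyl in the $C^2$ case, elementary for polygons), $f_0|A|=1$, and expanding $n\Phi_k(L_n)$ via $e^{-L_n}=e^{-\beta}n^{-1}(\log n)^{-(2k-3)\1_{\{k\ge 2\}}}$, one obtains $I_n=e^{-\beta}+O(\sqrt{(\log n)/n})$ for $k=1$, $I_n=e^{-\beta}+e^{-\beta}(\log\log n)/\log n+O(1/\log n)$ for $k=2$, and $I_n=O(1/\log n)$ for $k\ge 3$.

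The main technical step is the boundary contribution $B_n$. The plan is to apply the co-area formula to write
\[
B_n = n f_0 \int_0^{r_n}\int_{\{\dist(\cdot,\partial A)=t\}\cap A}\Phi_k\bigl(nf_0|B_{r_n}(x)\cap A|\bigr)\,d\mathcal{H}^{d-1}(x)\,dt.
\]
Using a local normal-graph chart (as in the proofs of Lemmas \ref{lemgeom3} and \ref{l:Med2}), the inner Hausdorff measure equals $|\partial A|(1+O(t))$, and $|B_{r_n}(x)\cap A| = r_n^2 g(t/r_n)+O(r_n^3)$, where $g(s):=\pi/2+\arcsin s + s\sqrt{1-s^2}$ is the planar halfspace area ($d=2$ is essential here), with derivative $g'(s)=2\sqrt{1-s^2}$. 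In the polygonal case the same identity holds outside an $O(r_n)$-neighbourhood of the corners; those neighbourhoods contribute at most $O(nr_n^2 e^{-\delta nr_n^2})=n^{-\delta'}$, as in Lemma \ref{l:polycor}. Substituting $s=t/r_n$ and then $u=L_n(g(s)/\pi-1/2)$ (so $ds=\pi\,du/(2L_n\sqrt{1-s(u)^2})$) and using $nf_0\pi r_n = L_n/r_n$ produces
\[
B_n = \frac{|\partial A|}{2 r_n}\,e^{-L_n/2}\int_0^{L_n/2}\sum_{j=0}^{k-1}\frac{(L_n/2+u)^j}{j!}e^{-u}\bigl(1+O(u^2/L_n^2)+O(r_n)\bigr)\,du.
\]
The $j=k-1$ term dominates: $\int_0^\infty (L_n/2+u)^{k-1}e^{-u}\,du = (L_n/2)^{k-1}(1+O(1/L_n))$, and the $j<k-1$ terms are smaller by a factor $O(1/L_n)$, yielding
\[
B_n = \frac{|\partial A|\,L_n^{k-1}\,e^{-L_n/2}}{(k-1)!\,2^k\,r_n}\bigl(1+O(1/\log n)\bigr).
\]

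For assembly, substitute $r_n^{-1}=\sqrt{nf_0\pi/L_n}$ and $e^{-L_n/2}=e^{-\beta/2}n^{-1/2}(\log n)^{-(2k-3)\1_{\{k\ge 2\}}/2}$; the powers of $n$ cancel exactly, leaving
\[
B_n = \frac{\sigma_A\sqrt{\pi}\,e^{-\beta/2}}{(k-1)!\,2^k}\, L_n^{k-3/2}(\log n)^{-(2k-3)\1_{\{k\ge 2\}}/2}\bigl(1+O(1/\log n)\bigr),
\]
using $\sigma_A=|\partial A|\sqrt{f_0}$ (valid for $d=2$). Expanding $L_n^{k-3/2}=(\log n)^{k-3/2}(1+(k-3/2)(2k-3)(\log\log n)/\log n+O(1/\log n))$ and noting $(k-3/2)(2k-3)=(2k-3)^2/2$ reproduces the three regimes of \eqref{e:EFlim}: the $\sqrt{\pi}\,e^{-\beta/2}/(2\sqrt{\log n})$ factor for $k=1$, the $\sqrt{\pi}\,e^{-\beta/2}/4$ with $(\log\log n)/(2\log n)$ correction for $k=2$, and the general $(2k-3)^2(\log\log n)/(2\log n)$ correction for $k\ge 3$. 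Summing $I_n$ and $B_n$ gives the claim. The principal obstacle is bookkeeping the $O(1/\log n)$ error terms uniformly through both the geometric approximation $|B_{r_n}(x)\cap A|=r_n^2 g(t/r_n)+O(r_n^3)$ and the Laplace-type $u$-integration; this requires the expansions $s(u)=\pi u/(2L_n)+O((u/L_n)^3)$ and $\sqrt{1-s(u)^2}=1+O((u/L_n)^2)$ to sufficient order, together with the verification that the $j<k-1$ terms and the polygonal corner contributions are indeed absorbed in the stated error.
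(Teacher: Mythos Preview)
Your proposal is correct. The interior/boundary split, the coarea reduction of the boundary strip to a one-dimensional integral, the halfspace area function $g(s)=\pi/2+\arcsin s+s\sqrt{1-s^2}$ with the curvature correction $O(r_n^3)$, and the Laplace-type evaluation of the resulting integral all check out, including the identification $(k-3/2)(2k-3)=(2k-3)^2/2$ that produces the $\log\log n/\log n$ correction factors and the corner estimate in the polygonal case.

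The paper's own proof, however, does essentially none of this work here: after the Mecke identity it rewrites $\EE[\xxi_{n,r_n}]=n|A|^{-1}\EE[|V_{n,k}|]$ with $V_{n,k}$ the $k$-vacant region, and then simply invokes \cite[Proposition~5.1]{HPY23}. So your argument is a self-contained derivation of the result that the paper outsources to that companion reference; the underlying method (boundary-layer Laplace asymptotics for the vacant-region volume) is the same one that reference uses, and your write-up is what one would expect a proof of that cited proposition to look like in the planar case.
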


\begin{proof} 
	Define the `$k$-vacant region'
	$V_{n,k} := \{x \in A: \Po_n(B_{r_n}(x)) < k\}$.
	Recall the definition of
	$p_{n,r}(x)$  at \eqref{e:pnj}.
	By \eqref{e:EEF}, we have
\begin{align}\label{e:E[F_n]}
	\EE[\xxi_{n,r_n}]= n f_0 \int_A p_{n,r_n}(x) dx = n |A|^{-1} 
	\EE[|V_{n,k}|].
\end{align} 
	Therefore the result follows from \cite[Proposition 5.1]{HPY23}.
\end{proof}
%\subsection{Convergence of $\EE[\xxi_{n,k}]$ for $d \geq 3$}

\begin{proposition}[convergence of the expectation in the uniform case with $d \geq 3$] \label{p:average3d+}
	Suppose $f \equiv f_0{\bf 1}_A$, with $d \geq 3$ and $A$ compact
	with $\partial A \in C^2$.
Fix $\be \in\RR$ and let
	$r_n(\beta), \xxi_{n,r}, c_{d,k}, \sigma_A$ be as given in
	\eqref{e:rboth},
	\eqref{e:def_isoVer}
	and 
	\eqref{e:defcdk2}. 
	%\eqref{e:defsigA}. 
	Let $\eps >0$.
	Then as $n \to \infty$,
\begin{align}
	\EE[\xxi_{n,r_n}] & =  e^{-\beta/2} c_{d,k} \sigma_A
	\Big(1 + \frac{ (k-2 + 1/d)^2  
	\log \log n}{(1-1/d) \log n}  
	\nonumber \\ 
	& + \frac{ (k-2 + 1/d)  \beta + 4k -4}{(2-2/d)
	\log n} 
	\Big)
	+ O\big( (\log n)^{\eps -2} \big). 
	\label{e:EFconv}
\end{align}
\end{proposition}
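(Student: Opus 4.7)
The plan is to compute $\EE[\xxi_{n,r_n}]$ by the Mecke formula, localize the integral near the boundary, parametrize using tubular coordinates, and apply a careful Laplace-type expansion. Throughout set $\alpha_n := n f_0 \theta_d r_n^d$, so that by \eqref{e:rboth} we have $\alpha_n = (2-2/d)\log n + (2k-4+2/d)\log\log n + \beta$, and let $q_k(t) := \sum_{j=0}^{k-1} (t^j/j!) e^{-t}$. By \eqref{e:EEF},
$$\EE[\xxi_{n,r_n}] = n f_0 \int_A q_k\bigl(n f_0 |B_{r_n}(x) \cap A| \bigr)\, dx.$$

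The first step is to split the integral into a bulk part over $A^{(-r_n)}$ and a boundary layer part over $A \cap (\partial A)^{(r_n)}$. For $x \in A^{(-r_n)}$ the ball is contained in $A$, so $n f_0 |B_{r_n}(x) \cap A| = \alpha_n$; the bulk contribution equals $nf_0 |A^{(-r_n)}| q_k(\alpha_n) = O(n (\log n)^{k-1} e^{-\alpha_n}) = O(n^{-(1-2/d)} (\log n)^{3-k-2/d})$, which decays polynomially in $n$ and is absorbed into the error term $O((\log n)^{\eps-2})$.

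For the boundary layer, I would use the $C^2$ tubular neighbourhood diffeomorphism $(y,s) \mapsto y - s N(y)$, with $y \in \partial A$, $s \in [0,r_n]$ and $N$ the inward unit normal, whose Jacobian is $\prod_{i=1}^{d-1}(1 - s \kappa_i(y)) = 1 + O(s)$ uniformly in $y$. Using the $C^2$ parametrization of $\partial A$ exactly as in the proof of Lemma \ref{lemgeom3}, one obtains
$$|B_{r_n}(x) \cap A| = r_n^d G(s/r_n) + O(r_n^{d+1}),$$
where $G(u) := |B_1(o) \cap (\RR^{d-1} \times [-u,\infty))|$ satisfies $G(0)=\theta_d/2$, $G(1)=\theta_d$ and $G'(0)=\theta_{d-1}$. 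Since $\alpha_n r_n = O(n^{-1/d}(\log n)^{1+1/d})$, the curvature corrections alter $n f_0 |B_{r_n}(x)\cap A|$ only by a super-polynomially small multiplicative factor $1+O(n^{-1/d}(\log n)^{1+1/d})$, safely within the $O((\log n)^{\eps-2})$ error. Thus the boundary integral reduces, up to such negligible errors, to
$$nf_0 \,|\partial A|\, r_n \int_0^1 q_k\!\bigl((\alpha_n/\theta_d)\,G(u)\bigr)\, du.$$

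The main step is the Laplace expansion of this integral, whose integrand is concentrated near $u=0$ since $G$ attains its minimum $\theta_d/2$ there. Change variable $v = (\alpha_n/\theta_d)(G(u)-\theta_d/2)$, giving $du = (\theta_d/(\alpha_n G'(u)))\, dv$ and $(\alpha_n/\theta_d) G(u) = \alpha_n/2 + v$. Extending the upper limit from $\alpha_n/2$ to $\infty$ costs only an exponentially small error, and substituting the expansions $G'(u) = \theta_{d-1}(1+O(u^2))$ and $u = v\theta_d/(\alpha_n \theta_{d-1}) + O(v^2/\alpha_n^2)$ yields
$$\int_0^1 q_k((\alpha_n/\theta_d)G(u))\, du = \frac{\theta_d}{\alpha_n \theta_{d-1}} \int_0^\infty q_k(\alpha_n/2 + v)\, dv \bigl(1 + O(1/\alpha_n^2)\bigr).$$
A direct computation (expand $(a+v)^j$ binomially and integrate) gives $\int_0^\infty q_k(a+v)\,dv = e^{-a}\sum_{m=0}^{k-1}(k-m) a^m/m!$, hence with $a=\alpha_n/2$,
$$\int_0^\infty q_k(\alpha_n/2+v)\,dv = \frac{(\alpha_n/2)^{k-1}}{(k-1)!}e^{-\alpha_n/2}\Bigl[1 + \frac{4(k-1)}{\alpha_n} + O(1/\alpha_n^2)\Bigr].$$

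Finally, combining these pieces with $nf_0 r_n = (nf_0)^{1-1/d}(\alpha_n/\theta_d)^{1/d}$ and $|\partial A|(nf_0)^{1-1/d} = n^{1-1/d}\sigma_A$, and using $e^{-\alpha_n/2} = n^{-(1-1/d)}(\log n)^{-(k-2+1/d)} e^{-\beta/2}$, I arrive at
$$\EE[\xxi_{n,r_n}] = c_{d,k}\sigma_A e^{-\beta/2} \Bigl(\alpha_n / \log n \Bigr)^{k-2+1/d}\Bigl[1 + \frac{4(k-1)}{\alpha_n} + O((\log n)^{\eps-2})\Bigr] \cdot (2-2/d)^{-(k-2+1/d)}.$$
Expanding $(\alpha_n/\log n)^{k-2+1/d} = (2-2/d)^{k-2+1/d}\bigl[1 + (k-2+1/d)\tfrac{(2k-4+2/d)\log\log n + \beta}{(2-2/d)\log n} + O(\log^2\!\log n/\log^2 n)\bigr]$ and collecting, the $\log\log n/\log n$ contribution is $(k-2+1/d)^2\log\log n/((1-1/d)\log n)$, and the two $1/\log n$ contributions (one from the $\beta$ expansion, one from $4(k-1)/\alpha_n$) combine over the common denominator $(2-2/d)\log n$ to $[(k-2+1/d)\beta + 4(k-1)]/((2-2/d)\log n)$, matching \eqref{e:EFconv}.

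The main obstacle is obtaining uniform control of all error terms from the geometric $C^2$ approximation, the Jacobian expansion, and the Laplace expansion simultaneously, in particular ensuring that polylogarithmic cross-terms fit into $O((\log n)^{\eps-2})$. The cleanest route is to carry out the expansions to one order further than leading and bound all remainders by crude $L^\infty$ estimates using \eqref{e:ballbds} and Lemma \ref{lemgeom3}.
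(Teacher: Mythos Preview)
Your computation is correct: the bulk/boundary split, the tubular-coordinate reduction, the Laplace expansion with the explicit evaluation $\int_0^\infty q_k(a+v)\,dv = e^{-a}\sum_{m=0}^{k-1}(k-m)a^m/m!$, and the final bookkeeping all check out and land exactly on \eqref{e:EFconv}. (One trivial slip: with $N$ the \emph{inward} normal you want $y+sN(y)$, not $y-sN(y)$.) The error control you outline is also adequate, the dominant residual being the $O((\log\log n)^2/(\log n)^2)$ from the second-order Taylor expansion of $(\alpha_n/((2-2/d)\log n))^{k-2+1/d}$, which is indeed $O((\log n)^{\eps-2})$.

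By contrast, the paper does not carry out this computation at all: its proof consists of the single observation \eqref{e:E[F_n]}, namely $\EE[\xxi_{n,r_n}]=n|A|^{-1}\EE[|V_{n,k}|]$ with $V_{n,k}$ the $k$-vacant region, followed by a citation to \cite[Proposition~5.2]{HPY23}, where the expected vacant volume is computed. What you have written is, in effect, a self-contained sketch of that external result; the method (tubular parametrization, half-space volume profile $G$, Laplace expansion near $u=0$) is the natural one and is presumably what \cite{HPY23} does as well.
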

%\begin{proof}[Proof of Proposition \ref{p:average3d+}]
\begin{proof}
	Again using \eqref{e:E[F_n]}, we obtain this
	result from \cite[Proposition 5.2]{HPY23}.
\end{proof}

\begin{corollary}\label{c:nnlink2d}
	Let $d=2$, $\beta \in\RR$. Then  
		\eqref{e:Mnweak} holds, and also
\begin{align}
	\PP[n f_0 \pi L_{n,1}^2 - \log n\le \beta] =
	\exp \big( - \frac{\sigma_A \pi^{1/2} e^{-\beta/2}}{2 (\log n)^{1/2}} \big) 
	\exp(- e^{-\be}) + O( (\log n)^{-1}).
	\label{e:Lwk}
\end{align} 
	Moreover \eqref{e:limu22} holds, and
	\begin{align}
	\PP[n f_0 \pi L_{n,2}^2 - \log n - \log \log n \le \beta] =
		& \exp \Big(- \frac{\sigma_A \pi^{1/2} e^{-\beta/2}
		\log \log n}{ 8 	\log n} 
		- \frac{e^{-\beta} \log \log n}{\log n}
		\Big) 
	\nonumber	\\
		& \times \exp \Big(- e^{-\be} - \frac{\pi^{1/2} \sigma_A 
		e^{-\beta/2}}{ 4} \Big) + O\Big( \frac{1}{\log n} \Big).
	\label{e:Lwk22}
\end{align} 

\end{corollary}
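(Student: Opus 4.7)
The plan is to combine three ingredients already established: the Poisson approximation bound from Proposition \ref{p:poisson2d}, the asymptotic expansion of $\EE[\xxi_{n,r_n}]$ from Proposition \ref{p:average2d}, and the comparison between $L_{n,k}$ and $M_{n,k}$ from Proposition \ref{p:L<M}. All bounds with $d=2$ come with an error term $O((\log n)^{1-d})=O((\log n)^{-1})$, which will match the error stated in the corollary.

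First I would fix $k\in\{1,2\}$ and $\beta\in\R$ and choose $r_n = r_n(\beta)$ as in \eqref{e:rboth}. Proposition \ref{p:average2d} immediately yields
\[
\EE[\xxi_{n,r_n}] \longrightarrow \beta',
\]
for the corresponding limit $\beta'$, so the hypothesis \eqref{e:Econv} of Proposition \ref{p:poisson2d} is satisfied. Hence Proposition \ref{p:poisson2d} (with $d=2$) gives
\[
\PP[L_{n,k}\leq r_n] \;=\; \exp\!\bigl(-\EE[\xxi_{n,r_n}]\bigr) + O((\log n)^{-1}).
\]
Now I would substitute the refined expansions of $\EE[\xxi_{n,r_n}]$ from \eqref{e:EFlim}. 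For $k=1$, writing $\EE[\xxi_{n,r_n}] = e^{-\beta} + \tfrac{\sqrt{\pi}\sigma_A e^{-\beta/2}}{2}(\log n)^{-1/2} + O((\log n)^{-3/2})$ and using $e^{-t} = 1+O(t)$ for the $O((\log n)^{-3/2})$ part factors out cleanly as
\[
\exp(-\EE[\xxi_{n,r_n}]) = \exp\!\Bigl(-\tfrac{\sqrt{\pi}\sigma_A e^{-\beta/2}}{2(\log n)^{1/2}}\Bigr)\exp(-e^{-\beta}) + O((\log n)^{-3/2}),
\]
giving \eqref{e:Lwk} after absorbing the resulting $O((\log n)^{-3/2})$ into the larger $O((\log n)^{-1})$ from the Poisson approximation step. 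For $k=2$, the two $\log\log n/\log n$ terms and the two constants in \eqref{e:EFlim} separate in exactly the same way: I would group $e^{-\beta}+\tfrac{\sqrt{\pi}\sigma_A e^{-\beta/2}}{4}$ into one Gumbel-like factor and the two $\log\log n/\log n$ terms into the correction factor displayed on the right hand side of \eqref{e:Lwk22}, again with leftover error $O((\log n)^{-1})$.

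Finally, to pass from $L_{n,k}$ to $M_{n,k}$ and obtain \eqref{e:Mnweak} and \eqref{e:limu22}, I would use the inclusion $\{L_{n,k}\leq r_n\} \supseteq \{M_{n,k}\leq r_n\}$ together with Proposition \ref{p:L<M}, which gives
\[
0 \leq \PP[L_{n,k}\leq r_n] - \PP[M_{n,k}\leq r_n] \leq \PP[L_{n,k}\leq r_n < M_{n,k}] = O((\log n)^{-1}).
\]
Adding this $O((\log n)^{-1})$ term to the estimate already obtained for $\PP[L_{n,k}\leq r_n]$ yields the corresponding identity for $\PP[M_{n,k}\leq r_n]$ with the same overall error. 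There is no real obstacle here beyond careful bookkeeping of the Taylor expansion $\exp(-a+\epsilon_n)=\exp(-a)(1+O(\epsilon_n))$ when multiplying out the factorisation of the exponential; the hardest part is simply verifying that all error terms coming from Propositions \ref{p:poisson2d}, \ref{p:average2d}, and \ref{p:L<M} combine into the single $O((\log n)^{-1})$ error claimed.
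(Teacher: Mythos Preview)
Your proposal is correct and follows essentially the same route as the paper: fix $r_n$ via \eqref{e:rboth}, verify \eqref{e:Econv} using Proposition \ref{p:average2d}, apply Proposition \ref{p:poisson2d} to get $\PP[L_{n,k}\le r_n]=\exp(-\EE[\xxi_{n,r_n}])+O((\log n)^{-1})$, substitute the expansion of $\EE[\xxi_{n,r_n}]$, and then pass to $M_{n,k}$ via Proposition \ref{p:L<M}. The only cosmetic difference is that the paper handles the error in the exponent via the Lipschitz bound $|e^{-\lambda}-e^{-\lambda'}|\le|\lambda-\lambda'|$ rather than a Taylor expansion, which amounts to the same thing.
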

\begin{proof}
	Let $r_n = r_n(\beta)$ be given by \eqref{e:rboth} with
	$d=2, k=1$; then
	$n f_0 \pi r_n^2 - \log n = \beta$
	for all large enough $n$.
	
	Let $\xxi_{n,r}$ be the number of isolated vertices of
	$G(\Po_n,r)$ as defined at \eqref{e:def_isoVer},
	 taking $k =1$.
	 By Proposition \ref{p:average2d}, 
	 \eqref{e:Econv} holds on taking  $\beta' = e^{-\beta}$. 
	Hence by Proposition \ref{p:poisson2d},  
	%Notice that $\PP[nf_0\pi L_n^2 - \log n\le \be] = \PP[ F_n=0]$.
	\begin{align*}
		\PP[nf_0 \pi L_{n,1}^2 - \log n \leq \beta]
		= \PP[ L_{n,1} \leq r_n]
		= \exp(- \EE[\xxi_{n,r_n}]) + 
		O( 1/(\log n)).
		%O\big( \frac{1}{\log n} \big)
	\end{align*}
	Then using
	 Proposition \ref{p:average2d}, 
	and the fact that %$\dtv(\PRV_\lambda,\PRV_{\lambda'}) 
	$|e^{-\la}- e^{-\la'}|
	\leq |\lambda - \lambda'|$
	for any $\lambda, \lambda' >0$,
	we obtain \eqref{e:Lwk}. We can then deduce
	\eqref{e:Mnweak} using Proposition \ref{p:L<M}.

	Next, let $r_n = r_n(\beta)$ be given by \eqref{e:rboth}
	again,
	but  now with $d=2, k=2$.
	 Then 
	$n f_0 \pi r_n^2 - \log n - \log \log n= \beta$
	for $n $ large. Repeating the previous argument
	gives us \eqref{e:Lwk22} and then \eqref{e:limu22}.
\end{proof}

\begin{corollary}\label{c:nnlink3d+}
	Suppose either $d\ge 3$, or $d=2, k \geq 3$. Let $\be\in\RR$.
	%and define  $L_n$ as in Corollary \ref{c:nnlink2d}. 
	Then \eqref{e:limudhi}
		 holds, and 
\begin{align*}
	\PP[n f_0 \theta_d L_{n,k}^d - (2-2/d)\log n
	+ (4-2k -2/d) \log \log n \le \beta] 
	\\
	%= 
	%\exp(-  e^{-\beta/2} c_d \sigma_A)  + O\left(  \frac{\log \log n}{\log n} \right).
	 = \exp \Big( - \frac{c_{d,k} \sigma_A e^{-\beta/2}
	(k-2+1/d)^2 \log \log n}{(1-1/d)\log n} 
	\Big) \exp(- c_{d,k} \sigma_A e^{-\beta/2} ) +
	O\big( \frac{1}{\log n} \big), 
%\\
%\PP[ nf_0 \theta_d M_n^d - (2-\frac{2}{d})( \log n- \log \log n) \le  \be ] =
\end{align*} 
\end{corollary}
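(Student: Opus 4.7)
The plan is to mimic the argument used for Corollary \ref{c:nnlink2d}, but now feeding in the expectation asymptotics from Propositions \ref{p:average2d} (when $d=2$, $k\ge 3$) and \ref{p:average3d+} (when $d\ge 3$), which happen to have the same leading form $c_{d,k}\sigma_A e^{-\beta/2}$ with a correction of order $\log\log n/\log n$. First, take $r_n = r_n(\beta)$ as given by \eqref{e:rboth}, so that for all large $n$ one has the exact identity $nf_0\theta_d r_n^d - (2-2/d)\log n + (4-2k-2/d)\log\log n = \beta$. This is the choice that matches the event in the corollary.

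Next, I would apply the relevant expectation proposition to obtain the expansion
\begin{align*}
\EE[\xxi_{n,r_n}] = \lambda_n^{(0)} + \lambda_n^{(1)} + O\big((\log n)^{-1}\big),
\end{align*}
where $\lambda_n^{(0)} := c_{d,k}\sigma_A e^{-\beta/2}$ and
\begin{align*}
\lambda_n^{(1)} := c_{d,k}\sigma_A e^{-\beta/2}\,\frac{(k-2+1/d)^2 \log\log n}{(1-1/d)\log n}.
\end{align*}
For $d=2,k\ge 3$ this follows from the third case of \eqref{e:EFlim} after substituting the explicit value of $c_{d,k}$ and noting $(2k-3)^2/2 = (k-2+1/d)^2/(1-1/d)$ at $d=2$. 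For $d\ge 3$ it follows from Proposition \ref{p:average3d+}, since both the $\beta$-dependent term $O(1/\log n)$ and the remainder $O((\log n)^{\eps-2})$ are absorbed into $O(1/\log n)$.

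Since \eqref{e:Econv} holds with $\beta' = \lambda_n^{(0)}$, Proposition \ref{p:poisson2d} yields
\begin{align*}
\PP[L_{n,k}\le r_n] = \exp(-\EE[\xxi_{n,r_n}]) + O\big((\log n)^{1-d}\big).
\end{align*}
Using the Lipschitz bound $|e^{-\lambda}-e^{-\lambda'}|\le|\lambda-\lambda'|$ to absorb the $O((\log n)^{-1})$ remainder of the expectation expansion into the error term, one obtains
\begin{align*}
\PP[L_{n,k}\le r_n] = \exp\!\big({-}\lambda_n^{(1)}\big)\exp\!\big({-}c_{d,k}\sigma_A e^{-\beta/2}\big) + O\big((\log n)^{-1}\big),
\end{align*}
which is exactly the asserted expansion for $L_{n,k}$. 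Finally, to pass from $L_{n,k}$ to $M_{n,k}$ and conclude \eqref{e:limudhi}, apply Proposition \ref{p:L<M}: its bound $\PP[L_{n,k}\le r_n < M_{n,k}] = O((\log n)^{1-d})$ shows that replacing $L_{n,k}$ by $M_{n,k}$ changes the probability by at most $O((\log n)^{-1})$, which is absorbed into the stated error.

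The only real accounting issue, and the sole place to be careful, is the error-term bookkeeping in the borderline case $d=2,k\ge 3$: there the error in Proposition \ref{p:poisson2d} is only $O(1/\log n)$, matching that of Proposition \ref{p:average2d} and Proposition \ref{p:L<M}, so one must verify that no error term is being expected to do more work than it can. Once this is checked, the proof is a direct assembly of Propositions \ref{p:poisson2d}, \ref{p:average2d}/\ref{p:average3d+}, and \ref{p:L<M} exactly in the pattern already used for Corollary \ref{c:nnlink2d}.
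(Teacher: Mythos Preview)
Your proposal is correct and follows exactly the approach the paper takes: the paper's proof simply says to repeat the argument of Corollary \ref{c:nnlink2d}, using Proposition \ref{p:average3d+} in place of Proposition \ref{p:average2d} when $d\ge 3$. Your added verifications (the identity $(2k-3)^2/2 = (k-2+1/d)^2/(1-1/d)$ at $d=2$, and the error-term bookkeeping in the borderline case $d=2,\,k\ge3$) are helpful elaborations of details the paper leaves implicit.
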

\begin{proof}
The proof is the same as for Corollary \ref{c:nnlink2d},
	using Proposition
	\ref{p:average3d+} in place of Proposition \ref{p:average2d}
	when $d \geq 3$.
\end{proof}

We are now ready to finish the proof of Theorem \ref{t:smooth}. 

\begin{proof}[\it Proof of Theorem \ref{t:smooth}]
	We already showed \eqref{e:Mnweak},
	\eqref{e:limu22},
	\eqref{e:limudhi}
	and the corresponding
	results for $L_{n,k}$, in Corollaries 
	\ref{c:nnlink2d} and
	\ref{c:nnlink3d+}. 
	We already proved \eqref{e:LM} under weaker assumptions
	in Theorem \ref{t:nonunif}.
	Therefore it remains only to prove \eqref{e:MXweak} and
	the binomial versions of 
	\eqref{e:limu22} and
	\eqref{e:limudhi}, along with the corresponding results
	for $L_k(\cX_n)$.

	Let $\phi_{n,r}$ be as defined at
	\eqref{e:Inrdef}. Set $n^- := n- n^{3/4}$.
	As shown in the proof of Lemma
	\ref{l:dePo}, 
	%LATER maybe extract as lemma?
	given $\beta \in \R$ we have that
	$$
	\phi_{n^-,r_n(\beta)} = \Big( 1 + O \big( \frac{\log n}{n^{1/4}} \big)
	\Big) \phi_{n,r_n(\beta)}.
	$$
	Then by Proposition \ref{p:poisson2d},
	\begin{align*}
		\PP[L_{n^-,k} \leq r_n(\beta)]
		= \exp(-\phi_{n^-,r_n(\beta)}) + O((\log n)^{-1})
		\\
		= \exp(-\phi_{n,r_n(\beta)}) + O((\log n)^{-1}).
	\end{align*}
	By the proof of Lemma \ref{l:dePo},
	% LATER can we extract as lemma?,
	\begin{align*}
		\PP[L_{k} (\cX_n) \leq r_n(\beta) ] & =
	\PP[L_{n^-,k} \leq r_n(\beta) ]  + O((\log n)/n^{1/4}).
		\\
		& = \exp(-\phi_{n,r_n(\beta)}) + O((\log n)^{-1}).
	\end{align*}
	Plugging in the expressions for $\phi_{n,r_n(\beta)}
	= \EE[\xxi_{n,r_n(\beta)}]$ in Lemmas 
	 \ref{p:average2d} and \ref{p:average3d+} gives us the
	result \eqref{e:MXweak} for $L_k(\cX_n)$ and
	the binomial versions of \eqref{e:limu22} and \eqref{e:limudhi}
	for $L_k(\cX_n)$.
	Finally, applying Proposition \ref{p:L<M} gives the same results
	for $M_k(\cX_n)$.
\end{proof}

\bibliographystyle{plain}

\begin{thebibliography}{123}

%% : 
%    single author :  Penrose05 ...
%    several authors: BY05  ...
%
%\bibitem{Ale96} Alexander, Kenneth S. The RSW theorem for continuum percolation and the CLT for Euclidean minimal spanning trees. \emph{Ann. Appl. Probab.} 6 (1996), no. 2, 466--494.

%\bibitem{AGG89} Arratia, R.; Goldstein, L.; Gordon, L. Two moments suffice for Poisson approximations: the Chen-Stein method. {\it Ann. Probab.} 17 (1989), no. 1, 9--25.

%\bibitem{BB92} Barbour, A. D.; Brown, T. C. Stein's method and point process approximation. {\it Stochastic Process. Appl.} 43 (1992), no. 1, 9--31.

	%\bibitem{BHJ} Barbour, A. D., Holst, L. and Janson, S. (1992)
	%{\em Poisson Approximation.} Oxford University Press, Oxford.
		%Oxford Studies in Probability, 2.  Oxford Science Publications. {\it The Clarendon Press, Oxford University Press, New York}, 1992. {\rm x}+277 pp.

%\bibitem{Chen75} Chen, Louis H. Y. Poisson approximation for dependent trials. {\it Ann. Probab.} 3 (1975), no. 3, 534--545. 

\bibitem{DH89}
	Dette, H. and Henze, N. (1989)
The limit distribution of the largest nearest-neighbour link in the unit 
		$d$-cube.
		{\em J. Appl. Probab.} {\bf 26},
		%(1989), no. 1, 
		67--80.

%\bibitem{Kallenberg} Kallenberg, Olav. Random measures, theory and applications. Probability Theory and Stochastic Modelling, 77. {\it Springer, Cham}, 2017. xiii+694 pp.

%\bibitem{Gan13} Ganesan, Ghurumuruhan. Size of the giant component in a random geometric graph. {\it Ann. Inst. Henri Poincaré Probab. Stat.} 49 (2013), no. 4, 1130--1140.
\bibitem{GI10}
	Gupta, B. and Iyer, S. (2010)
Criticality of the exponential rate of decay for the largest nearest-neighbor link in random geometric graphs.
		{\em Adv. in Appl. Probab.} {\bf 42},
		%no. 3,
		631--658.

	\bibitem{GK99} Gupta, P. and  Kumar, P. R. (1999)
	Critical power for asymptotic connectivity in wireless networks.
		{\it Stochastic Analysis, Control, Optimization and Applications} (eds. W.H. McEneany, G. Yin and Q. Zhang), 547--566.
		%Systems Control Found. Appl., {\it
		%Birkhäuser, Boston.
		Birkh\"{a}user, Boston.
		
	\bibitem{HPY23}
		Higgs, F., Penrose, M.D. and Yang, X. (2024)
		Covering one point process with another.
		%Preprint, version from 4 September 2023.
		arXiv: 2401.03832v1


	\bibitem{HR05} Hsing, T. and Rootzén, H. (2005) Extremes on trees.
		{\it Ann. Probab.} {\bf 33},
		%(2005), no. 1,
		413--444. 


	\bibitem{IT12} Iyer, S. K. and  Thacker, D. (2012)
	Nonuniform random geometric graphs with location-dependent radii. 
		{\it Ann. Appl. Probab.} {\bf 22},
		%no. 5,
		2048--2066.

\bibitem{LP18} Last, G. %Günter 
	and Penrose, M. (2018)  {\em Lectures on the Poisson process.}
		%Institute of Mathematical Statistics Textbooks, 7.
		 Cambridge University Press, Cambridge.
		 %, 2018. xx+293 pp.

	\bibitem{Pen} Penrose, M. (2003) {\em Random Geometric Graphs}.
		%Oxford Studies in Probability, 5. {\it
		Oxford University Press, Oxford.
		%}, 2003. xiv+330 pp.


\bibitem{Pen97} Penrose, M. D. (1997) The longest edge of the random minimal
	spanning tree.  {\it Ann. Appl. Probab.} {\bf 7},
	%(1997), no. 2,
	340--361.

\bibitem{Pen98} Penrose, M. D. (1998) Extremes for the minimal spanning tree
	on normally distributed points. {\it Adv. in Appl. Probab.} {\bf 30},
	%(1998), no. 3, 
	628--639.

\bibitem{Pen99a} Penrose, M. D. (1999)
	On $k$-connectivity for a geometric random graph. {\it Random Structures Algorithms} {\bf 15},
%	(1999), no. 2,
145--164.

\bibitem{Pen99c} Penrose, M. D. (1999) 
	A strong law for the largest nearest-neighbour link between random
		points. {\em J. London Math. Soc. (2)}
		{\bf 60},
		%(1999), no. 3,
		951--960.

	\bibitem{Pen99b} Penrose, M. D. (1999)
		A strong law for the longest edge of the minimal spanning tree. {\it Ann. Probab.} {\bf 27},
		%(1999), no. 1,
		246--260.

	\bibitem{Pen18} Penrose, M. D. (2018) Inhomogeneous random graphs, 
		isolated vertices, and Poisson approximation. 
		{\it J. Appl. Probab.} {\bf 55},
		%(2018), no. 1,
		112--136.

\bibitem{Pen22} Penrose, M. D. (2023) Random Euclidean coverage from within.
	{\em Probab. Theory Related Fields} {\bf 185},
	%	(2023), no. 3-4, 
		747--814.
	%Arxiv:2101.06306v3
		
\bibitem{PYk}
	Penrose, M. D. and Yang, X. (2023) On $k$-clusters of high-intensity
		random geometric graphs. arXiv:2209.14758v3

\bibitem{PYpoly}
Penrose, M. D., Yang, X., and Higgs, F. (2023). Largest nearest-neighbour link and connectivity threshold in a polytopal random sample. {\it Journal of Applied and Computational Topology}, 1-28.

	\bibitem{Rossi}
        Rossi, F., Fiorentino, M.  and Versace, P. (1984)
        Two-component extreme value distribution
        for flood frequency analysis.
        {\em Water Resources Research} {\bf 20},
        %No. 7
        847--856.


\end{thebibliography}

\end{document}